\documentclass[11pt]{article}
\usepackage{amsmath, amsthm, amssymb, amsfonts}
\usepackage{graphicx}
\usepackage{hyperref}
\usepackage{bm}
\usepackage{float}
\usepackage{bbm}
\usepackage{xcolor}

\usepackage[backend=bibtex,maxbibnames=50,style=alphabetic,sorting=nyt,giveninits=true]{biblatex}
\newtheorem{theorem}{Theorem}
\newtheorem{proposition}[theorem]{Proposition}
\newtheorem{lemma}[theorem]{Lemma}
\newtheorem{corollary}[theorem]{Corollary}

\theoremstyle{definition}
\newtheorem{definition}{Definition}
\newtheorem{remark}{Remark}

\newcommand\ddfrac[2]{\frac{\displaystyle #1}{\displaystyle #2}}
\newcommand{\st}{\mathcal{S}}
\newcommand{\A}{\mathcal{A}}
\newcommand{\opn}{\mathcal{O}}
\def\R{{\mathbb R}}
\def\P{{\mathbb P}}

\begin{document}

\title{
Metastability and phase transition in a social network model with multiple opinions}
\author{Felipe Penafiel\thanks{E-mail: felipe.lev.pena@gmail.com}  \\\and Kádmo Laxa\thanks{Faculdade de Filosofia, Ciências e Letras de Ribeirão Preto, Universidade de São Paulo, Av. Bandeirantes, 3900,
Ribeirão Preto-SP, 14040-901, Brazil.
 E-mail: kadmolaxa@gmail.com}}
\date{\today}
\maketitle

\begin{abstract} 
We consider a stochastic opinion dynamics model on a fully connected social network with $N$ actors interacting by expressing opinions from a set of $M$ opinions. At any time $t\geq 0$, each actor is associated to an $M$-tuple representing the social pressure exerted on this actor for each opinion. The evolution of the matrix containing the social pressure of all actors for all opinions is a Markov jump process. Each actor tends to express opinions according to their social pressure vector and this tendency is modulated by a polarization coefficient. When an actor expresses an opinion $o$, its social pressure for all opinions is reset to zero, while for other actors the social pressure for $o$ increases by 1 and the social pressure for other opinions decreases by $1/(M-1)$. 
In this setting, we prove fast consensus formation, existence of a unique invariant measure and metastability in a highly polarized network. Moreover, by considering a communication bias parameter, the system exhibits a phase transition described as follows. With a negative communication bias parameter, all actors except one stop expressing in a finite time almost surely. Otherwise, no actor stops expressing opinions.
 \vspace{0.2cm}
 
\textit{Keywords}: Opinion dynamics, metastability, phase transition, social networks.
 
 \vspace{0.2cm}
 
 \textit{AMS MSC}: 60K35, 60G55, 91D30.
\end{abstract}

\section{Introduction}

The present paper studies a probabilistic social dynamics model on a fully connected social network with $N\geq 3$ actors interacting by expressing opinions from a set of $M\geq 2$ possible opinions. We describe it informally as follows.

At any time $t\geq 0$, each actor is associated with a $M$-tuple representing the social pressure exerted on this actor for each opinion. The evolution of the matrix containing the social pressure of all actors for all opinions is a Markov jump process. 
The rate at which an actor expresses a certain opinion grows exponentially with its social pressure on the opinion. The speed at which actors express opinions and the tendency of each actor to express opinions in which the actor has the greatest social pressure increase with a non-negative parameter named \textit{polarization coefficient}.

When an actor expresses an opinion, its social pressure for all opinions is reset to zero and the social pressure of all the other actors is modified in the following way. The social pressures of the other actors for the expressed opinion increase by $1$ and the social pressures of the other actors for the opinions different from the expressed one decrease by $1/(M-1)$.

Let us now informally present our results. Consider the stochastic process describing the time evolution of the matrix containing the social pressure of each actor for each opinion in a network with $N$ actors, $M$ opinions and a fixed polarization coefficient. The existence of the process and the uniqueness of its invariant probability measure are the content of Theorem \ref{Theorem 1}. 

A consensus matrix of social pressures is a non-null matrix in which the social pressures of all actors for a given opinion are non-negative and the social pressures of all actors for the other opinions are non-positive. The invariant probability measure concentrates on the set of consensus matrices, and the hitting time to this set vanishes almost surely as the polarization coefficient diverges (to be more precise, the concentration of the invariant measure and the fast hitting time occur to a subset of the consensus matrices named ladder sets, which will be defined in Section \ref{sec:modelandresults}). This is the content of Theorem \ref{Theorem 2}.

As the polarization coefficient diverges, the system takes a long and unpredictable random time to go from a consensus matrix in the direction of a certain opinion to a consensus matrix in the direction of any other opinion. This means that the change from consensus to consensus in the network has metastable behavior. This is the content of Theorem \ref{Theorem 3}.

In our next result, we consider a new parameter $\alpha\in\mathbb{R}$ in our model representing the \textbf{communication bias}. We modify the model presented above in the following way: when an actor expresses an opinion, its social pressure for all opinions is reset to zero and the social pressure of all the other actors is modified in the following way. 
The social pressures of the other actors on the expressed opinion increase by $1$ and the social pressures of the other actors on the other opinions increase by $-1/(M-1)+\alpha$. Note that if $\alpha> \frac{1}{M-1}$, any opinion expressed in the network yields positive social pressure for all actors, preventing the formation of consensus. Therefore, our next result deals with the cases $\alpha < \frac{1}{M-1}$.

We have a phase transition: when $0\leq \alpha< \frac{1}{M-1}$, adapted versions of the results described above for the model without communication bias hold. In particular, no actor stops expressing opinions. However, if $\alpha< 0$, eventually all the actors, except one, stop expressing opinions in the system. This is the content of Theorem \ref{Theorem 4}.

The model studied in the present article is a generalization of the one introduced in \cite{galves2024fastconsensusmetastabilityhighly}. In fact, the model of \cite{galves2024fastconsensusmetastabilityhighly} is equivalent to the model introduced here in the case $M=2$. The results of the present article dealing with the properties of the invariant measure, the fast formation of consensus, and metastability are generalizations of the results presented in \cite{galves2024fastconsensusmetastabilityhighly}. However, the communication bias parameter is a novel feature of the present article, which allows the study of phase transition. The model  introduced by \cite{galves2024fastconsensusmetastabilityhighly} is also considered in \cite{evalaxa}, which studies mean field limits as the number of actors on the network increases and considers more general functions describing the jump rates of the system.

Since each actor resets its social pressures when expressing an opinion, the rate at which an actor expresses each opinion depends on the activity of the network since the last time the actor expressed an opinion. Therefore, this model is a system of interacting point processes with memory of variable length. More precisely,
our model belongs to the  class of systems of interacting point processes with memory of variable length that was introduced in discrete time by \cite{glmodel} and in continuous time by \cite{gl4} to model systems of spiking neurons. We refer to \cite{gb} for the presentation of this class of models and a full list of references.  

The notion of metastability considered here is inspired by the \textit{pathwise approach to metastability}
introduced by \cite{cassandro}. 
For more references and an introduction to the topic, we refer the reader to \cite{metabook}.
In particular, metastability by the pathwise approach for systems of spiking neurons belonging to the class introduced by \cite{glmodel} and \cite{gl4} was studied by \cite{mandre1,mandre2,taille,evamonm,mandre3,laxa}. For other works in this class of models studying phase transition, we refer to  \cite{brochiniphasetransition,gl2, amarcos}. We refer to \cite{galves2024fastconsensusmetastabilityhighly}
for the motivation to consider a social network model with the features of this class of models.

Opinion dynamics models describe how individual states evolve through social interaction. The voter model \cite{Liggett1985} is the closest antecedent to our framework: agents on a network update their discrete opinion by imitating others, and the competition between imitation and noise drives the system between consensus and coexistence. Recent multi-opinion extensions \cite{HerrerasAzcue2019} have shown that this competition gives rise to noise-driven transitions between unimodal and multimodal stationary distributions, closely related to our metastability results. A parallel tradition models opinions as continuous values updated by averaging: the DeGroot model \cite{DeGroot1974} and its bounded confidence variants \cite{Deffuant2000, HegselmannKrause2002} focus on how the topology of influence determines whether consensus or opinion clustering emerges. Our model differs from both traditions in a key respect: rather than updating opinions directly, actors accumulate social pressure over time and express opinions at rates driven by this pressure, making the model a system of interacting point processes with variable-length memory. We refer to \cite{Peralta2022} for a broader survey of the field.

The paper is organized as follows: In Section~\ref{sec:modelandresults} we define the model and state the main theorems for the model without considering the communication bias parameter.  In Section~\ref{sec:communicationbias} we define the model with communication bias and state the phase transition result. In Section \ref{sec:visualintuition} we present some simulations to illustrate the behavior of the model. In Section~\ref{sec:proofs} we prove the main results. Finally, in Section~\ref{sec:discussion} we discuss extensions and perspectives.

\section{Model and main results} \label{sec:modelandresults}

For $N\geq 3$, let $\mathcal{A}=\{1,...,N\}$ be a set representing \textbf{actors} and $\mathcal{O}=\{1,...,M\}$ be a set representing \textbf{opinions}. The model consists of an $N\times M$ matrix evolving over time in order to describe the \textbf{social pressure} of each actor for each opinion. 

In the following, any matrix of social pressure will be denoted as $u=(u(a,o))_{a\in\mathcal{A},\,o\in\mathcal{O}}$. The entry in row $a$ and column
$o$ is denoted $u(a,o)$. We use $u(a,\cdot)$ to denote the entire row
vector corresponding to actor $a$ (social pressures of this actor toward all
opinions), and $u(\cdot,o)$ to denote the column vector corresponding
to opinion $o$ (social pressures of all actors toward this opinion). 

When an actor $a\in\mathcal{A}$ expresses opinion $o\in\mathcal{O}$, we apply the following \textbf{operator} on the matrix $u$, describing the social pressure values:
\begin{equation}\label{modelmap}
    [\pi^{a,o}(u)](b,p)=
    \begin{cases}
        0 & \text{if $b = a$}, \\
        u(b,p) + 1 & \text{if $b\neq a$ and $p=o$}, \\
        u(b,p) - \frac{1}{M-1} & \text{if $b\neq a$ and $p\neq o$}.
    \end{cases}
\end{equation}
Thus, the process will be restricted to the following set
\begin{multline}
    \label{set_S}
    \mathcal{S} = \{u = (u(a,o))_{a\in\mathcal{A}, o\in\mathcal{O}}\in\mathcal{M}_{N,M}(\mathbb{R}): \\ \forall a\in\mathcal{A},o\in\mathcal{O}, u(a,o)\in\mathbb{Z}+(1/(M-1))\mathbb{Z};\\\min_{a\in\mathcal{A}}\|u(a,\cdot)\|_{\infty}=0;\forall a\in\mathcal{A},\sum_{o\in\mathcal{O}}u(a,o)=0 \}.
\end{multline} 
Here, $\mathcal{M}_{N,M}(\mathbb{R})$ denotes the set of real matrices with $N$ rows and $M$ columns and $(1/(M-1))\mathbb{Z}=\{(1/(M-1))z:z\in \mathbb{Z}\}$ . Indeed, from any starting position in $\mathcal{M}_{N,M}(\mathbb{R})$, after all actors express an opinion at least once, the process will remain in $\st$ forever.

The rates at which the actors express opinions on the network depend on a parameter $\beta\geq0$, we shall call it the \textbf{polarization coefficient}. We use the notation $(U_t^{\beta,u})_{t\geq0}$ in order to define the time evolution of the social pressure matrix with polarization coefficient $\beta\geq0$ and initial matrix $U_0^{\beta,u}=u$. As a \textbf{Markov Jump Process}, its evolution over time is described by the \textbf{generator}
\begin{equation} \label{generator:withoutbias}
\mathcal{G}f(u)=\sum_{o\in\mathcal{O}}\sum_{a\in\mathcal{A}}\exp(\beta u(a,o))[f(\pi^{a,o}(u))-f(u)],    
\end{equation}
where $f:\st \to \R$ is a bounded function. Moreover, we set $(T_n,A_n,O_n)_n$ to respectively represent the expression times, actors and opinions driving the evolution of the random process.

We now present the main theorems. Proofs are deferred to Section~\ref{sec:proofs}.

The first theorem states that the random process is defined for any positive time $t$, and that it has a unique invariant probability measure.

\begin{theorem}[Existence]\label{Theorem 1}
    For any $\beta\geq 0$ and any starting matrix $u\in\mathcal{S}$:
    \begin{enumerate}
        \item The sequence $(T_m:m\geq 1)$ of jumping times of the process $(U^{\beta, u}_t)_t$ satisfies
    \[\mathbb{P}(\sup\{T_m:m\geq 1\} = \infty)=1. \]
        \item The process $(U^{\beta,u}_t)_t$ has a unique invariant probability measure $\mu^\beta$.
    \end{enumerate}
\end{theorem}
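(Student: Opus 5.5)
Both parts rest on one structural observation about $\mathcal{S}$: every entry of $u\in\mathcal{S}$ is bounded above by a quantity determined by the expression history. If actor $b$ has not spoken since time $s$, each entry $u(b,o)$ is at most (number of expressions by others since $s$), since each such expression raises it by at most $1$. The argument also uses that the total rate is bounded below. Row sums are zero, so every row has some entry $\ge 0$. Hence each actor speaks at total rate at least $1$, and the total jump rate is at least $N$.

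\emph{Part 1 (non-explosion).} I would avoid bounding the rates globally, since they are unbounded on $\mathcal{S}$. Instead I will bound them along the jump chain. Consider the embedded jump chain and the quantity $H_n=\max_{a,o}U_{T_n}(a,o)$. Each jump increases every entry by at most $1$, so $H_n\le H_0+n$. The total rate after $n$ jumps is therefore at most $\lambda_n:=NM e^{\beta(H_0+n)}$. Conditionally on the chain, the holding times are exponential with parameters at most $\lambda_n$. Since $\sum_n \lambda_n^{-1}<\infty$ for $\beta>0$, this crude bound does not suffice. So I refine it: entries cannot grow unboundedly without the corresponding actor speaking. Once actor $a$ has $u(a,o)=h$ large, it speaks at rate at least $e^{\beta h}$, and speaking resets its row to $0$.

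The standard route is the criterion $\sum_n 1/q(X_n)=\infty$ a.s., where $q$ is the total rate. I would show that infinitely often the chain visits a fixed finite set $K=\{u:\max|u(a,o)|\le C\}$, on which $q$ is bounded by $NMe^{\beta C}$. From $K$, there is a uniformly positive probability of returning to $K$ in a bounded number of steps. The reason is that the high-rate actors are exactly those with large entries, and each of their expressions resets them. Concretely, the $N$ successive expressions by the currently highest-pressure actors bring all rows to within bounded range, with probability bounded below uniformly. Then by Borel–Cantelli (conditional version), $\sum 1/q(X_n)=\infty$, so $\sup T_m=\infty$ a.s.

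\emph{Part 2 (invariant measure).} I would apply a Doeblin/Harris argument with a concrete regeneration state. Fix the reference state $u^*$ reached by the following deterministic sequence: actors $1,2,\dots,N$ each express opinion $1$ in order. After this sequence, row $a$ is determined solely by the expressions following $a$'s last reset. So $u^*$ does not depend on the initial condition: every row's reset erases the past. The key estimate is a uniform lower bound: for some $t_0$ and $\delta>0$,
\[
\inf_{u\in\mathcal{S}}\mathbb{P}(\text{the sequence above occurs and nothing else within }[0,t_0])\ge \delta.
\]

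This uniform bound is the main obstacle, because $\mathcal{S}$ is non-compact and large initial entries mean huge rates. The trick I would use is to organize the ordering so that each actor's row is reset early. The minimal rate bound of $1$ per actor gives a lower bound on the chance that a designated actor speaks next. That chance is its rate divided by the total rate, and the total rate can be huge, so this is where care is needed.

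To handle it, I would first let the process run to the set $K$ of Part 1, where rates are bounded. I would show that the hitting time of $K$ has a uniformly bounded tail in $u$. The reason is that high-pressure actors fire almost instantly and reset. From $K$ the bound above holds trivially. This gives a uniform minorization $P_{t_1}(u,\cdot)\ge \delta' \mathbbm{1}_{u^*}$, hence Doeblin's condition. Doeblin's condition yields existence and uniqueness of $\mu^\beta$, and moreover exponential convergence in total variation.
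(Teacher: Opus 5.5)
Your proposal is correct, but it takes a different route from the paper in both parts.

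\textbf{Part 1.} The paper never controls the total rate along the jump chain. It uses the sure combinatorial fact of Proposition~\ref{prop1}: among any $N$ consecutive expressions, at least one is made by an actor whose row has sup-norm below $N$. Such low-pressure expressions occur at intensity at most $NMe^{\beta N}$, so they are dominated by a Poisson process. Having finitely many of them in $[0,t]$ then forces finitely many jumps in $[0,t]$.

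Your argument also works: recurrence of the jump chain to a finite set $K$ via $N$ argmax steps, L\'evy's conditional Borel--Cantelli lemma, and the criterion $\sum_n 1/q(X_n)=\infty$. It is heavier, though. It needs three things:
\begin{itemize}
\item the hitting bound for $K$ uniformly over all starting states, not merely a return bound from $K$ (your argmax argument does supply this);
\item the trivial estimate that the argmax pair is chosen with probability at least $1/(NM)$;
\item the content of Proposition~\ref{prop2}, which you assert without proof.
\end{itemize}
For the last point the argument is short. If the $N$ argmax actors are distinct, every row was reset during those $N$ steps. If an actor first repeats at step $m$, its row has heard at most $m-2$ expressions, so the global maximum at that moment was at most $m-2$.

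\textbf{Part 2.} The paper proves a Doeblin condition only for the skeleton chain. It reaches $\{\|v\|_\infty<MN\}$ in $N$ argmax steps with probability at least $\zeta_\beta^N$, then reaches $l_o$ in $N$ more steps with probability at least $\varepsilon^*$. This involves no timing at all. It then transfers to continuous time via $\mu^\beta\propto\tilde\mu^\beta/q_\beta$, using non-explosion and $q_\beta\geq M$.

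You instead minorize $P_{t_1}$ directly. This costs extra time control. You need the rate lower bound to finish the argmax phase before a fixed time. You also need a lower bound on $P_s(v,u^*)$, uniform over $v\in K$ and $s$ in a compact interval, to absorb the random hitting time of $K$. Both are available since $K$ is finite and the rates are bounded there. In return, you get exponential ergodicity of the continuous-time process directly and avoid the skeleton-to-process correspondence.

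The paper's choice has one practical advantage: $\tilde\mu^\beta$ and the formula $\mu^\beta\propto\tilde\mu^\beta/q_\beta$ are exactly what it reuses in the proof of Theorem~\ref{Theorem 2}.
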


For the second theorem, we need to define ladder sets. Those sets have a sort of stability property in the following sense: whenever the most probable choice of actor and opinion occurs, the process remains on the ladder set. This favored opinion concentrates social pressure in a specific way, while other opinions all have negative social pressure.

\begin{definition}\label{newL} 
    For any given opinion $o\in\mathcal{O}$, we define the ladder sets supporting opinion $o$ as follows:
\begin{multline*}
\mathcal{L}^o=\{u\in\st: \{u(1,o),...,u(N,o) \} = \{0,...,N-1 \}, \\ u(\cdot,p)=\frac{-u(\cdot,o)}{M-1}, \forall p \in\mathcal{O}\setminus \{o\}\}.
\end{multline*}
    Also, the ladder set is the union of the ladder sets on all of the $M$ opinions:
    \[\mathcal{L}=\bigcup_{o\in\mathcal{O}}\mathcal{L}^o.\]
\end{definition}

The second theorem states that the invariant probability measure concentrates in ladder sets, and that the ladder set hitting time vanishes as $\beta\rightarrow+\infty$. We define the hitting time for any $\beta\geq0, u\in\mathcal{S}, \theta \subset\mathcal{S}$:

$$
    R^{\beta,u}(\theta):=\inf\{t\geq0:U^{\beta,u}_t\in\theta\}.
$$

\begin{theorem}[Concentration of the invariant measure]\label{Theorem 2} The following statements about ladder sets are true:
        \begin{enumerate}
            \item There exists a constant $C>0$, such that for any $\beta\geq 0$ the invariant probability measure $\mu^{\beta}$ satisfies
            \[\mu^{\beta}(\mathcal{L})\geq 1-Ce^{-\frac{\beta}{M-1}}. \]
            \item For any fixed $\delta>0$
            \[\sup_{u\in\mathcal{S}\setminus\{0\}}\mathbb{P}\left(R^{\beta, u}(\mathcal{L})>e^{-\frac{\beta}{M-1}(1-\delta)} \right)\rightarrow 0\text{, as }\beta\rightarrow +\infty. \]
    \end{enumerate}
\end{theorem}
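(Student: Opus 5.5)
The plan is to build everything on one deterministic skeleton, the \emph{greedy dynamics}: from $u$, let the pair $(a,o)$ maximizing $u(a,o)$ fire, that is, apply $\pi^{a,o}$. Two lattice facts drive the estimates. First, since $u\in\mathcal{S}\setminus\{0\}$ has rows summing to zero with entries in $\frac{1}{M-1}\mathbb{Z}$, some entry satisfies $\max_{a,o}u(a,o)\ge \frac{1}{M-1}$. Hence the total jump rate is at least $e^{\beta/(M-1)}$, and every holding time has mean at most $e^{-\beta/(M-1)}$. Second, any two distinct entries differ by at least $\frac{1}{M-1}$. So the greedy jump, or one of the tied maximizers when there are ties, is chosen with probability at least $1-NMe^{-\beta/(M-1)}$. (With $u$ the maximum, each non-greedy rate is at most $e^{\beta(u-1/(M-1))}$, and there are at most $NM$ of them.) I would first record these two facts as a lemma.

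\textbf{Combinatorial step (expected main obstacle).} Show that there is $K=K(N,M)$ such that from every $u\in\mathcal{S}\setminus\{0\}$, every greedy path, with ties broken arbitrarily, enters $\mathcal{L}$ within $K$ steps. I would argue in two stages.
\begin{enumerate}
\item Every actor fires within a bounded number of greedy steps. Suppose an unfired actor $b$ is never chosen. Each firing of some $o$ changes row $b$ by $+1$ in column $o$ and $-\frac{1}{M-1}$ elsewhere, while the firing actors are reset to zero. One then shows that row $b$'s maximum eventually dominates the maximum over the already-fired rows, which stay bounded by $N-1$. The delicate point is that $u$ is unbounded, so the argument must work with relative orderings rather than magnitudes.
\item After all actors have fired, the matrix is a function of the last $N$ firings only. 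On this finite set of ``history'' matrices, a potential argument shows that the greedy dynamics locks onto a single opinion $o^*$. The firings of $o^*$ raise column $o^*$ and lower all other columns, so after $N$ consecutive firings of $o^*$ the matrix lies in $\mathcal{L}^{o^*}$. Moreover $\mathcal{L}^{o}$ is invariant under the greedy step: the actor with pressure $N-1$ fires $o$.
\end{enumerate}
I would try to prove the first stage by induction on the number of unfired actors. Unbounded starting matrices are where I expect the real difficulty.

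\textbf{Part 2.} Given the combinatorial step, the probability that the first $K$ jumps all follow the greedy path is at least $1-KNMe^{-\beta/(M-1)}\to1$. On this event, $R^{\beta,u}(\mathcal{L})$ is bounded by a sum of $K$ holding times, each with mean at most $e^{-\beta/(M-1)}$. Markov's inequality then gives
$$
\mathbb{P}\bigl(R^{\beta,u}(\mathcal{L})>e^{-\frac{\beta}{M-1}(1-\delta)}\bigr)\le KNMe^{-\frac{\beta}{M-1}}+Ke^{-\frac{\delta\beta}{M-1}}\to0,
$$
and the bound is uniform in $u\ne0$.

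\textbf{Part 1.} I would use flux balance for the stationary process, using the uniqueness of $\mu^\beta$ from Theorem~\ref{Theorem 1}:
$$
\mu^\beta(\mathcal{L}^c)=(\text{rate of exits from }\mathcal{L})\times(\text{mean excursion length}).
$$
From $u\in\mathcal{L}^o$ the dominant rate is $e^{\beta(N-1)}$, and that jump stays in $\mathcal{L}^o$. The total exit rate is therefore at most $C e^{\beta(N-1)-\beta/(M-1)}$, the largest competitor being for instance the actor at level $N-2$ firing $o$. A state reached by an exit still has maximum entry near $N-1$ or $N-2$. Rerunning the Part~2 argument from such states, the holding times now have mean at most $e^{-\beta(N-2)}$ and the greedy return takes $K$ steps. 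A failed return restarts the argument, which gives a geometric bound. Together these give a mean excursion length of at most $C e^{-\beta(N-1)}$ up to constants, so the product is at most $Ce^{-\beta/(M-1)}$.

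Making the excursion-length bound exponentially sharp is the second delicate point. If it fails, I would fall back on a regenerative decomposition at successive entrances to a fixed ladder state, bounding the ratio of the expected time outside $\mathcal{L}$ to the expected cycle length.
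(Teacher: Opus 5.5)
\textbf{Part 1: the bound you propose cannot work.} Your Part~1 does not close, and the form you chose, (total exit rate from $\mathcal{L}$) $\times$ (uniform mean excursion length), fails as stated.

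\begin{itemize}
\item The exit rate is at least $e^{\beta(N-2)}$. From $l\in\mathcal{L}^o$, the actor at level $N-2$ expressing $o$ leaves $\mathcal{L}$ at that rate, since column $o$ becomes $\{0,\dots,N-2,N\}$.
\item The worst excursion is much longer than you claim. If the actor with null row expresses some $p\neq o$ (rate $1$), the new state has maximal entry $N-1-\frac{1}{M-1}$. Its first holding time alone has mean at least $\frac{1}{NM}e^{-\beta(N-1-\frac{1}{M-1})}$, far above the $Ce^{-\beta(N-1)}$ you assert.
\item Your asserted $Ce^{-\beta(N-1)}$ does not follow from your own holding-time bound $e^{-\beta(N-2)}$. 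That bound gives a product of order $e^{\beta(1-\frac{1}{M-1})}$.
\item Any bound of your form is therefore at least $\frac{1}{NM}e^{-\beta(1-\frac{1}{M-1})}$, which is of order one when $M=2$.
\end{itemize}

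A flux argument would have to pair each exit with its own excursion length. It would also have to control failed returns that descend to states with maximal entry $\frac{1}{M-1}$, or to the zero matrix, where holding times are of order $e^{-\beta/(M-1)}$ or $1$. A geometric restart using the uniform bounds of Part~2 loses exponential factors there. Your regenerative fallback meets the same obstacle unless you bound the time spent at each non-ladder state.

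The paper does exactly that, through the skeleton chain:
\begin{itemize}
\item It uses $\mu^\beta(u)\propto\tilde\mu^\beta(u)/q_\beta(u)$.
\item Kac's lemma gives $\tilde\mu^\beta(u)\le C'e^{-\beta(N-1)}$ for every $u\notin\hat{\mathcal{L}}$, with an extra factor $e^{-\beta/(M-1)}$ at $u=0$. The chain reaches $\mathcal{L}$ from $u$ by greedy steps, then stays in $\hat{\mathcal{L}}$ for a geometric number of steps with failure probability $O(e^{-\beta(N-1)})$ per step.
\item It divides by $q_\beta(u)\ge e^{\beta/(M-1)}$ for $u\neq 0$, or by $q_\beta(u)\ge e^{\beta N}$ when some entry is at least $N$.
\item It compares with $\tilde\mu^\beta(\mathcal{L})/(MNe^{\beta(N-1)})\ge c\,e^{-\beta(N-1)}$. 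Only finitely many states have all entries below $N$, and those outside $\mathcal{L}$ lie outside $\hat{\mathcal{L}}$, which gives the deficit $Ce^{-\beta/(M-1)}$.
\end{itemize}

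\textbf{The combinatorial step is unproved, and your plan targets the wrong difficulty.} This step is the paper's Proposition~3, with $K=(M+1)N$, and both parts rest on it.

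Unbounded initial data are harmless, so Stage~1 is unnecessary. Within the first $N$ greedy steps, either every actor expresses, so every row is reset, or some actor expresses twice. In the second case the greedy maximum at that moment is an entry of a row reset fewer than $N$ steps earlier. The maximum grows by at most $1$ per step, so after $N$ greedy steps all entries lie in $(-MN,N)$ (Proposition~2).

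The real content is that the greedy dynamics locks onto one opinion, and you do not supply the ``potential argument'' for this. Your claim that the matrix depends only on the last $N$ expressions is also unjustified: a row depends on every expression since that actor's own last one.

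The missing idea is the structure at the first repeat time $\tau$:
\begin{itemize}
\item The rows of $A_{\tau-1},A_{\tau-2},\dots$ differ successively by exactly one expression.
\item So if the greedy maximum $m$ is attained in column $o=O_\tau$, that column contains $\lfloor m\rfloor+1$ distinct actors at levels at least $j+(m-\lfloor m\rfloor)$, for $j=0,\dots,\lfloor m\rfloor$. Every other entry is at most $m$.
\item The paper uses this staircase to keep the greedy rule expressing $o$, while the bound on entries outside column $o$ drops by $\frac{1}{M-1}$ per step. This reaches $\mathcal{C}^o$ within $(M-1)N$ steps.
\item From $\mathcal{C}^o$, the next $N$ greedy expressions of $o$ come from distinct actors and land in $\mathcal{L}^o$.
\end{itemize}
Your assertion that $N$ consecutive expressions of $o^*$ give $\mathcal{L}^{o^*}$ needs precisely this distinctness.

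\textbf{Part 2 is fine given that step.} It is the paper's argument, with Markov's inequality in place of a union bound over exponentials. You should add that the greedy path from $u\neq0$ never enters $0$, since $0$ is reachable only by a non-greedy jump. This is what makes the rate bound $e^{\beta/(M-1)}$ hold at every step.
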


For the third theorem, we introduce consensus sets. This is a larger set of states than ladder sets, since we only impose the favored opinion to have non-negative social pressure on all actors, while the social pressure for other opinions is non-positive.

\begin{definition}[Consensus sets]\label{newCminus}

For any opinion $o\in\mathcal{O}$, a consensus set for $o$ is defined in the following way
\begin{equation}\label{C}
    \mathcal{C}^o = \{u \in \mathcal{S}:u\neq 0, u(a,o)\geq 0 ,u(a,p)\leq 0, \forall a\in\mathcal{A}, \forall p\in\mathcal{O}\setminus\{o\}\}.
\end{equation}
Note that for any $u\in \mathcal{C}^o$, there exists at least one actor $a\in \A$ such that $u(a,o)>0$.
    A consensus set for another opinion than $o\in\mathcal{O}$ is defined as follows:
    \[
    \mathcal{C}^{-o}=\bigcup_{p\in\mathcal{O}\setminus\{o\}}\mathcal{C}^p .
    \]
\end{definition}

Finally, the third theorem establishes the metastable behavior when we look at the time it takes to observe a consensus transition when $\beta\rightarrow+\infty$.

\begin{theorem}[Metastability]\label{Theorem 3}
There exists $\beta_0, C_1>0$ and  $C_2 \in (0,1/2)$, depending only on $M$ and $N$, such that for any $\beta\geq \beta_0$, for any $o\in\mathcal{O}$ and for any  $u\in \mathcal{C}^o$,
$$
\sup_{t\geq 0}\left|\P\left(\frac{R^{\beta,u}(\mathcal{C}^{-o})}{\mathbb{E}(R^{\beta,u}(\mathcal{C}^{-o}))}>t\right)-e^{-t}\right| \leq C_1\beta^3e^{-C_2 \beta}.
$$
Moreover, for any $u,v \in \mathcal{C}^o$,
$$
\left|\frac{\mathbb{E}(R^{\beta,u}(\mathcal{C}^{-o}))}{\mathbb{E}(R^{\beta,v}(\mathcal{C}^{-o}))}-1\right| \leq C_1\beta^3e^{-C_2 \beta}.
$$
\end{theorem}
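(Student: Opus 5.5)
The plan is to follow the pathwise approach to metastability in the form used in \cite{galves2024fastconsensusmetastabilityhighly}. The standard route to the exponential law has three ingredients, and I will establish them in order. First, a \emph{fast return} estimate: starting anywhere in $\mathcal{C}^o$, the process reaches $\mathcal{L}^o$ within a short time $\tau_\beta$ (for instance $\tau_\beta=e^{-\beta/(M-1)(1-\delta)}$, as in Theorem~\ref{Theorem 2}) with probability at least $1-\epsilon_\beta$, and without leaving $\mathcal{C}^o$. Second, a \emph{rare escape} estimate: starting from $\mathcal{L}^o$, the probability of hitting $\mathcal{C}^{-o}$ before a time of order $\beta^k$ is at most of order $\beta^k e^{-c\beta}$. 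Third, a renewal (loss-of-memory) argument. Since $\mathcal{L}^o$ is finite and each of its points is visited many times before escape, I would use the strong Markov property at successive returns to $\mathcal{L}^o$. This gives the approximate memoryless relation
\[
\P(R>t+s)\approx \P(R>t)\,\P(R>s),
\]
uniformly over starting points in $\mathcal{C}^o$, with error controlled by $\epsilon_\beta$ plus the escape probability during a return window. The exponential approximation and the comparison of expectations then follow from the standard lemma used in \cite{laxa,galves2024fastconsensusmetastabilityhighly}: if a nonnegative variable is approximately memoryless with error $\eta$ at scale $T$ and $\P(R>T)$ is bounded away from $0$ and $1$, then $\sup_t|\P(R/\mathbb{E}R>t)-e^{-t}|=O(\eta)$. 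The factor $\beta^3$ comes from the choice of time scale and the number of return cycles.

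For the fast return step, I will analyse jumps inside $\mathcal{C}^o$. There, each actor's dominant rate is $e^{\beta u(a,o)}$, and all other opinions have rate at most $1$, since $u(a,p)\le 0$. An actor with the largest pressure for $o$ expresses $o$ first with probability $1-O(e^{-\beta})$ relative to the competing transitions of the order needed. Repeating this over about $N$ such expressions turns the column $u(\cdot,o)$ into a permutation of $\{0,\dots,N-1\}$, with the other columns equal to $-u(\cdot,o)/(M-1)$; this is exactly $\mathcal{L}^o$. This mostly re-uses the proof of Theorem~\ref{Theorem 2}(2), restricted to $\mathcal{C}^o$, together with a check that each likely step keeps the state in $\mathcal{C}^o$.

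For the rare escape step, I will look at a typical state in $\mathcal{L}^o$. The top actor, with pressure $N-1$, fires $o$ at rate $e^{\beta(N-1)}$. Any deviation, such as some actor expressing $p\neq o$, has rate at most $e^{\beta\cdot 0}=1$ (from the zero-pressure actor), so it occurs with probability about $e^{-\beta(N-1)}$ per cycle. I then need to quantify the cheapest path from $\mathcal{L}^o$ to $\mathcal{C}^{-o}$. A single deviant expression of $p$ adds only $1+1/(M-1)$ to the gap between columns $p$ and $o$, while the pressures for $o$ are spread over $0,\dots,N-1$. Escape therefore needs a sequence of improbable expressions before $o$ dominates again, each costing at least a factor $e^{-c\beta}$. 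To keep this tractable I would use a coarse bound, showing that every path out of $\mathcal{C}^o$ contains at least one transition of relative probability at most $e^{-C_2\beta}$ with $C_2<1/2$. To get the lower bound on $\P(R>T)$ needed by the lemma, I would note the converse: escape does happen with probability bounded below over some (much longer) scale, because every state communicates.

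I expect the main obstacle to be this escape-cost bound. The state space is infinite, and large excursions inside $\mathcal{C}^o$ (for example all actors with very negative pressures toward $p$) must be controlled uniformly. I would first try a Lyapunov/comparison argument. The plan is to show that, from $\mathcal{L}^o$, the process stays in a finite neighbourhood of $\mathcal{L}^o$ until escape with overwhelming probability, using the fact that each reset drives the state back toward the ladder structure. On that finite set, the potential-theoretic costs can be computed by enumeration, with the constants depending only on $M$ and $N$.
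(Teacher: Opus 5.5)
The overall architecture of your proposal matches the paper's: fast recurrence to $\mathcal{L}^o$, slow escape from $\mathcal{L}^o$, then an abstract exponential-law lemma. The paper packages the last step as Proposition~\ref{prop:evamonm} (Theorem 5.3 of \cite{evamonm}) and only verifies its hypotheses. However, your escape estimate, as planned, does not establish what is needed.

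You measure costs by relative probability per transition. But in $\mathcal{L}^o$ the process jumps at rate of order $e^{\beta(N-1)}$. So a bound asserting that every path out of $\mathcal{C}^o$ contains a transition of relative probability at most $e^{-C_2\beta}$ gives no control of $\P(R\leq s)$ for $s$ of order $1$ or $\beta^k$. Worse, exits from $\mathcal{C}^o$ are not rare in real time. For $M\geq 3$, when the null actor of a state in $\mathcal{L}^o$ expresses some $p\neq o$ (rate $M-1$), the actor at level $1$ gets pressure $(M-2)/(M-1)>0$ for $p$, and the state leaves $\mathcal{C}^o$.

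What is needed is a real-time rate bound for reaching $\mathcal{C}^{-o}$. The paper's Lemma~\ref{theo3lemm2} supplies it without Lyapunov functions or enumeration:
\begin{itemize}
\item Expressions against negative pressure have total rate at most $NMe^{-\beta/(M-1)}$ in every state.
\item Apart from these, the only exit from $\hat{\mathcal{L}}^o$ is a non-$o$ expression by the null actor (rate at most $NM$). This lands in the enlarged set $\hat{\mathcal{C}}^o$, which is disjoint from $\mathcal{C}^{-o}$ and stable under most-likely moves.
\item By Propositions~\ref{prop3} and~\ref{prop4}, a run of $(M+1)N$ most-likely moves from $\hat{\mathcal{C}}^o$ brings the process back to $\mathcal{L}^o$, except with probability at most $(M+1)MN^2e^{-\beta/(M-1)}$.
\end{itemize}
So the escape time dominates the minimum of an $\mathrm{Exp}(NMe^{-\beta/(M-1)})$ clock and a geometric sum of $\mathrm{Exp}(MN)$ variables with parameter $O(e^{-\beta/(M-1)})$. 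This yields $c_\beta\geq\frac12 N^{-3}(M+1)^{-3}e^{\beta/(M-1)}$ together with hypotheses \eqref{eq:propevamonm_assump1} and \eqref{eq:propevamonm_assump4}.

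The relevant cost is therefore $e^{-\beta/(M-1)}$, set by the lattice spacing $1/(M-1)$, not $e^{-\beta(N-1)}$ per cycle. The infinite state space causes no difficulty: until one of the two bad events occurs, the process stays in $\hat{\mathcal{C}}^o\cup\mathcal{C}^o$.

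Your recurrence step is also too narrow. The renewal argument applies the Markov property at a time $t$ on $\{R>t\}$, where you only know $U_t\notin\mathcal{C}^{-o}$. By the example above, $U_t$ may well lie outside $\mathcal{C}^o$. So you need return to $\mathcal{L}^o\cup\mathcal{C}^{-o}$ uniformly over all of $\st$, which is hypothesis \eqref{eq:propevamonm_assump2}. That supremum includes the zero matrix, whose holding time is $\mathrm{Exp}(MN)$, so your window $e^{-\beta(1-\delta)/(M-1)}$ cannot work there. This is why the paper takes $s_2=2\beta$ in Lemma~\ref{theo3lemm1}, via Theorem~\ref{Theorem 2} and Corollary~\ref{theo2coro2}. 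It then checks $(s_2/c_\beta)\vee\varepsilon_2\leq Ce^{-\delta\beta}$ using the lower bound on $c_\beta$.
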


Note that the theorem above implies that for any $o\in\mathcal{O}$ and for any $u\in \mathcal{C}^o$,
\[\frac{R^{\beta, u}(\mathcal{C}^{-o})}{\mathbb{E}\left[R^{\beta, u}(\mathcal{C}^{-o}) \right]}\rightarrow Exp(1)\text{, in distribution, as }\beta\rightarrow + \infty, \]
where $Exp(1)$ denotes a random variable following an exponential probability law of parameter 1.

\section{Model with communication bias and phase transition result} \label{sec:communicationbias}
Let us now consider a new model.
We introduce a new parameter $\alpha\in\mathbb{R}$. It represents a \textbf{communication bias} and defines a new family of operators:
\begin{equation}\label{biasmap}
    [\pi_\alpha^{a,o}(u)](b,p)=
    \begin{cases}
        0 & \text{if $b = a$,} \\
        u(b,p) + 1 & \text{if $b\neq a$ and $p=o$,} \\
        u(b,p) - \frac{1}{M-1}+\alpha & \text{if $b\neq a$ and $p\neq o$.}
    \end{cases}
\end{equation}
For this new model, when an actor $a\in\mathcal{A}$ expresses opinion $o\in\mathcal{O}$, we apply the operator $\pi_\alpha^{a,o}$.

Let us write $\gamma:=\frac{1}{M-1}-\alpha$. In this model, expressing opinion $o$ adds $1$ to the social pressure of every other actor for $o$ and adds $-\gamma$ to its social pressure for every other opinion. As a consequence, the row of an actor $a$ that has heard $n_a$ expressions since the last time it expressed an opinion, $c_p$ of which concerned opinion $p$, equals $c_p(1+\gamma)-\gamma n_a$ in column $p$. Since the actors cannot express opinions simultaneously, note that $\sum_{a\in \A}n_a \geq 0+1+\ldots +(N-1)=N(N-1)/2$. The process is therefore restricted to the following set.
\begin{multline} \label{biased_set_S}
\st^{\alpha}= \Bigl\{u= (u(a,o))_{a\in\mathcal{A}, o\in\mathcal{O}} \in \mathcal{M}_{N,M}(\mathbb{R}):\min_{a\in\mathcal{A}}\|u(a,\cdot)\|_{\infty}=0; \\ 
\forall a\in\mathcal{A}, \exists\, n_a\in\mathbb{N}, \sum_{a\in \A}n_a \geq N(N-1)/2, \ \exists\,(c_p)_{p\in\opn}\in\mathbb{N}^M \\
\text{ with } \sum_{p\in\opn}c_p=n_a\text{ and } u(a,p)=c_p(1+\gamma)-\gamma n_a \text{ for all } p \in \opn \Bigr\}.
\end{multline}
 
\begin{remark}\label{rem:Salphaproperties}
The set $\st^{\alpha}$ is stable under every operator $\pi^{a,o}_{\alpha}$: hearing an expression of $p$ increases $n_a$ and $c_p$ by one, and expressing resets the row of the expressing actor to $(n_a,c)=(0,0)$. Moreover, it implies adapted versions of the constraints of definition (\ref{set_S}): $\sum_{p\in\opn}u(a,p)=(1+\gamma)n_a-M\gamma n_a=n_a(M-1)\alpha$ for every $a\in\A$, and $u(a,p)=c_p+\gamma(c_p-n_a)\in\mathbb{Z}+\gamma\mathbb{Z}$. Finally, a row is null if and only if $n_a=0$, since a balanced row with $n_a\geq 1$ has all its entries equal to $\frac{n_a(M-1)\alpha}{M}\neq 0$ when $\alpha\neq 0$. Finally, note that $0\not \in \st^{\alpha}$.
\end{remark}

We use the notation $(U_t^{\alpha,\beta,u})_{t\geq0}$ in order to define the time evolution of the social pressure matrix with polarization coefficient $\beta>0$, communication bias $\alpha \in \R$ and initial matrix $U_0^{\alpha, \beta,u}=u$. As a \textbf{Markov Jump Process}, its evolution over time is described by the \textbf{generator}
\begin{equation}\label{biasGenerator}
    \tilde{\mathcal{G}}f(u)=\sum_{o\in\mathcal{O}}\sum_{a\in\mathcal{A}}\exp(\beta u(a,o))[f(\pi_\alpha^{a,o}(u))-f(u)],
\end{equation}
where $f:\st^{\alpha} \to \R$ is a bounded function.

Moreover, we set $(T_n^{\alpha,\beta,u},A_n^{\alpha,\beta,u},O_n^{\alpha,\beta,u})_n$ to respectively represent the expression times, actors and opinions driving the evolution of the random process.

For the case $0<\alpha< \frac{1}{M-1}$, so $\gamma>0$, we can adapt the ladder and consensus sets in the following way.
\begin{equation}\label{def:consensussetalpha}
\mathcal{C}_{\alpha}^{o} = \{u \in \mathcal{S}^{\alpha}:u\neq 0, u(a,o)\geq 0 ,u(a,p)\leq 0, \forall a\in\mathcal{A}, \forall p\in\mathcal{O}\setminus\{o\}\},
\end{equation}
\begin{multline}
\label{def:laddersetalpha}
\mathcal{L}_{\alpha}^{o}=\Big\{u\in\st^{\alpha}: \{u(1,o),...,u(N,o) \} = \{0,...,N-1 \}, \\ u(\cdot ,p)=-\gamma u(\cdot ,o), \forall p \in\mathcal{O}\setminus \{o\}\Big\}.
\end{multline}
We analogously define 
 \[
    \mathcal{C}^{-o}_{\alpha}=\bigcup_{p\in\mathcal{O}\setminus\{o\}}\mathcal{C}^p_\alpha
    \]
and
 \[\mathcal{L}_{\alpha}=\bigcup_{o\in\mathcal{O}}\mathcal{L}^o_{\alpha}.\]

\begin{theorem}[Phase transition]\label{Theorem 4} The distinct cases of positive and negative bias yield the following results.
    \begin{enumerate}
        \item For any $\beta>0$ and for any $\alpha<0$, we have that for any $u\in\mathcal{S}^{\alpha}$ 
        $$
            \mathbb{P}\left(\bigcup_{n\geq1}\bigcap_{m\geq n} \{A^{\alpha,\beta,u}_n=A^{\alpha,\beta,u}_m\} \right)=1.
        $$
        \item For any $0\leq \alpha <  \frac{1}{M-1}$, adapted versions of Theorems \ref{Theorem 1}, \ref{Theorem 2} and \ref{Theorem 3} that consider the model \eqref{biasGenerator} and the definitions of ladder and consensus sets given by \eqref{def:consensussetalpha} and \eqref{def:laddersetalpha} hold. 
    \end{enumerate}
\end{theorem}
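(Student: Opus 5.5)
For part 1 ($\alpha<0$, so $\gamma>\frac{1}{M-1}$), the plan is to show that once a single actor has spoken many times in a row, it keeps the floor forever with probability bounded below. A conditional Borel--Cantelli argument then upgrades this to probability one.

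\textbf{Step 1 (solo runs).} While actor $a$ speaks consecutively, its row is reset to $0$ before each of its expressions. Its total rate is then $M$ and its opinions are i.i.d.\ uniform on $\mathcal{O}$. For any other actor $b$, each expression of $a$ increases $n_b$ by one and increases $c_p$ for a uniformly chosen $p$. By Remark \ref{rem:Salphaproperties},
$$u(b,p)=c_p(1+\gamma)-\gamma n_b .$$
With $c_p\approx n_b/M$ this gives
$$u(b,p)\approx n_b\,\frac{(1+\gamma)-M\gamma}{M}=n_b\,\frac{(M-1)\alpha}{M}\to-\infty,$$
linearly in $n_b$, for every $p$. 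A large deviation bound for the multinomial vector $(c_p)$ gives, with probability at least $1-Ce^{-cn}$, that $\max_p u(b,p)\le -\epsilon n_b$ along the whole future of the run. On this event the total rate of $b$ after $k$ solo expressions is summable in $k$. Since the inter-arrival times of $a$ are $\mathrm{Exp}(M)$, the probability that some $b\neq a$ ever interrupts is at most $\sum_k \mathbb{E}[\text{rate of } b \times \text{holding time}]$, which is small. Hence, from a state reached right after $a$ has spoken $k_0$ consecutive times with every other actor's row bounded above by some fixed $K$, the probability of no further interruption is at least some $p_0>0$ depending only on $(k_0,K,\beta,M,N)$.

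\textbf{Step 2 (recurrence to good states).} I need that a.s.\ the process enters infinitely often a state in which some actor has just spoken and all other rows have entries at most $K$. From such a state the actor repeats $k_0$ times with probability bounded below, since its rate is $M$ and the competing rates are at most $NMe^{\beta K}$. With Step 1 and Lévy's extension of Borel--Cantelli applied at the stopping times $T_n$, this gives a.s.\ eventual absorption into a single speaker. Absorption into a single speaker means $A_m$ is eventually constant, which is the event in part 1.

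Step 2 is the main obstacle, because entries can grow: $u(b,o)\le n_b$ if $b$ keeps hearing the same opinion. The first thing I would try is the sign of the total mass. Each expression adds $(M-1)\alpha<0$ to every other row sum and zeroes the speaker's row. A high-pressure actor has rate at least $e^{\beta\max_p u(b,p)}$, so it speaks quickly and resets itself. I would therefore show that, at the times when every actor has spoken at least once since a reference time, the maximum entry has a uniformly bounded exponential moment. This would give recurrence to $\{\max u\le K\}$. Non-explosion (the analogue of Theorem \ref{Theorem 1}, part 1) should follow from the same bound, since each row's entries are at most its number $n_b$ of heard expressions, which is reset every time $b$ speaks.

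For part 2 ($0\le\alpha<\frac{1}{M-1}$, so $0<\gamma\le\frac{1}{M-1}$), the case $\alpha=0$ is exactly the original model, so Theorems \ref{Theorem 1}--\ref{Theorem 3} apply directly. For $\alpha>0$ I would redo their proofs with $\frac{1}{M-1}$ replaced by $\gamma$ throughout. The ladder sets $\mathcal{L}^o_\alpha$ are still stable under the most probable move. In a ladder state only the top actor, with $u(\cdot,o)=N-1$, has the dominant rate $e^{\beta(N-1)}$. Any off-ladder move costs a factor at most $e^{-\beta\gamma}$, so the bounds $e^{-\beta/(M-1)}$ in Theorem \ref{Theorem 2} become $e^{-\beta\gamma}$. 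The rows now have sum $n_a(M-1)\alpha\ge0$ rather than $0$, but all the positive-rate estimates only use upper bounds on non-dominant entries, which still hold. The metastability estimate of Theorem \ref{Theorem 3} goes through with constants depending also on $\gamma$, via the same return-to-ladder / rare-escape decomposition.
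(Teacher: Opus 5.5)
Your Step~1 is essentially the paper's Proposition~\ref{prop2_teo4}. The paper handles the finite initial stretch, where the other rows may still be positive, with a product bound ($\mathbb{E}[\exp(-\sum_m\lambda_m(V))]$ on a law-of-large-numbers event). A union bound alone is small only once the rows are already linearly negative.

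\textbf{Part~1, Step~2.} This is the genuine gap: you flag it as the main obstacle and leave it as a plan. The exponential-moment route is unproven and also awkwardly posed. With positive probability some actors never speak again, so the times at which every actor has spoken since a reference time need not exist. You do not need to control typical behaviour at all. You only need a path into a bounded set whose probability is bounded below uniformly in the starting state, and argmax moves give one (Proposition~\ref{prop1_teo4}).
\begin{itemize}
\item At every state the argmax pair carries at least a fraction $1/(NM)$ of the total rate.
\item $N$ consecutive argmax choices force all entries below $N$. Since $\gamma>0$, each entry of a row is at most the number $n_b$ of expressions that row has heard. If the $N$ speakers are distinct, every row has heard at most $N-1$ expressions. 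Otherwise, the first repeated speaker, at step $m$, is an argmax whose row has heard at most $m-2$ expressions, so every entry is at most $m-2$ at that moment. Afterwards entries grow by at most $1$ per step.
\end{itemize}
Hence $\P(U^{\alpha,u}_{T_N}\in B_N^{\alpha})\ge (NM)^{-N}$ for every $u\in\st^{\alpha}$. Combine this with Step~1 via the strong Markov property at $T_N$, letting the null-row actor of the state reached start the solo run. This gives a success probability $c>0$ uniform in the state. A geometric-trials argument (the paper's failure times $\eta_n$), or your L\'evy--Borel--Cantelli step, then finishes. Non-explosion likewise needs no moment bound: it follows from the analogue of Proposition~\ref{prop1} and Poisson domination, exactly as in Theorem~\ref{Theorem 1}.

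\textbf{Part~2.} Replacing $\frac{1}{M-1}$ by $\gamma$ throughout conflates two different roles that $\frac{1}{M-1}$ plays in the unbiased proofs. So the rates $e^{-\beta\gamma}$ you claim do not follow from the argument you describe.
\begin{itemize}
\item In Proposition~\ref{prop4}, $\frac{1}{M-1}$ is the spacing of the lattice $\frac{1}{M-1}\mathbb{Z}$ containing all entries. This separates the maximal entry from every non-maximal one and gives $\P(\xi_1^v)\ge\zeta_\beta$ uniformly in $v$. In $\st^{\alpha}$ the entries lie in $\mathbb{Z}+\gamma\mathbb{Z}$, which has no positive minimal spacing when $\gamma$ is irrational. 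Two near-maximal entries can then be arbitrarily close, so the exact-argmax event no longer has probability close to $1$ uniformly. The paper instead uses the near-argmax event $\tilde{\xi}^{\alpha,u}_n$ with tolerance $\gamma/2$, which produces $e^{-\beta\gamma/2}$.
\item In Theorem~\ref{Theorem 2}, $\frac{1}{M-1}$ is the smallest possible maximal entry of a nonzero state, giving $q_\beta(u)\ge e^{\beta/(M-1)}$. In $\st^{\alpha}$, a row that has heard each opinion $k$ times has all entries equal to $k(M-1)\alpha$. So reachable states (for instance after each actor in turn expresses every opinion once) have all entries of order $\alpha$, far below $\gamma$ when $\alpha$ is small. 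The concentration exponent this argument yields is therefore $(M-1)\alpha$ (Remark~\ref{rem:jumpratelowerboundalpha}), not $\gamma$.
\end{itemize}
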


The adapted versions of Theorems \ref{Theorem 1}, \ref{Theorem 2} and \ref{Theorem 3} are Theorems \ref{Theorem 1_2}, \ref{Theorem 2_2} and \ref{Theorem 3_2}, presented in Appendix \ref{ap:part2theorem4}.

\begin{remark}  In the case $\alpha>\frac{1}{M-1}$ we have that any expression yields positive social pressure towards all opinions. With this, the notion of consensus introduced in the case $\alpha<\frac{1}{M-1}$ cannot be considered. Moreover, when $\alpha\geq1+\frac{1}{M-1}$, we have a degenerate situation in which any expression yields more social pressure towards opinions other than the one supported by the expression. See Section \ref{sec:visualintuition} for a discussion concerning these cases.
\end{remark}

\section{Some visual intuition} \label{sec:visualintuition}

Beyond the analogy to social dynamics, intuition on this model can also be built thanks to visualizations generated by means of numerical simulation. We will thus indulge in some exploration of this sort, before heading into the proofs of our theorems in the next section. 

Figure \ref{fig:fig1} gives insight into Theorem \ref{Theorem 2} by giving a clear picture of what a ladder set looks like. In the plot, actors are ordered so that their pressure values increase step by step --- from $0$ up to $ N - 1 $ --- producing the staircase profile predicted by the theorem. The key point is that actors’ pressures for a given opinion are aligned on this shape, while pressure for other opinions remains non-positive. As the polarization parameter $\beta$ grows, the invariant probability measure is locked into such sets. Indeed, note that Figure \ref{fig:fig1} illustrates the following stability property: An expression by the actor with the biggest social pressure (high probability when $\beta$ is big) moves the process to another ladder supporting the same opinion.

\begin{figure}[H]
    \centering
    \includegraphics[width=0.49\textwidth]{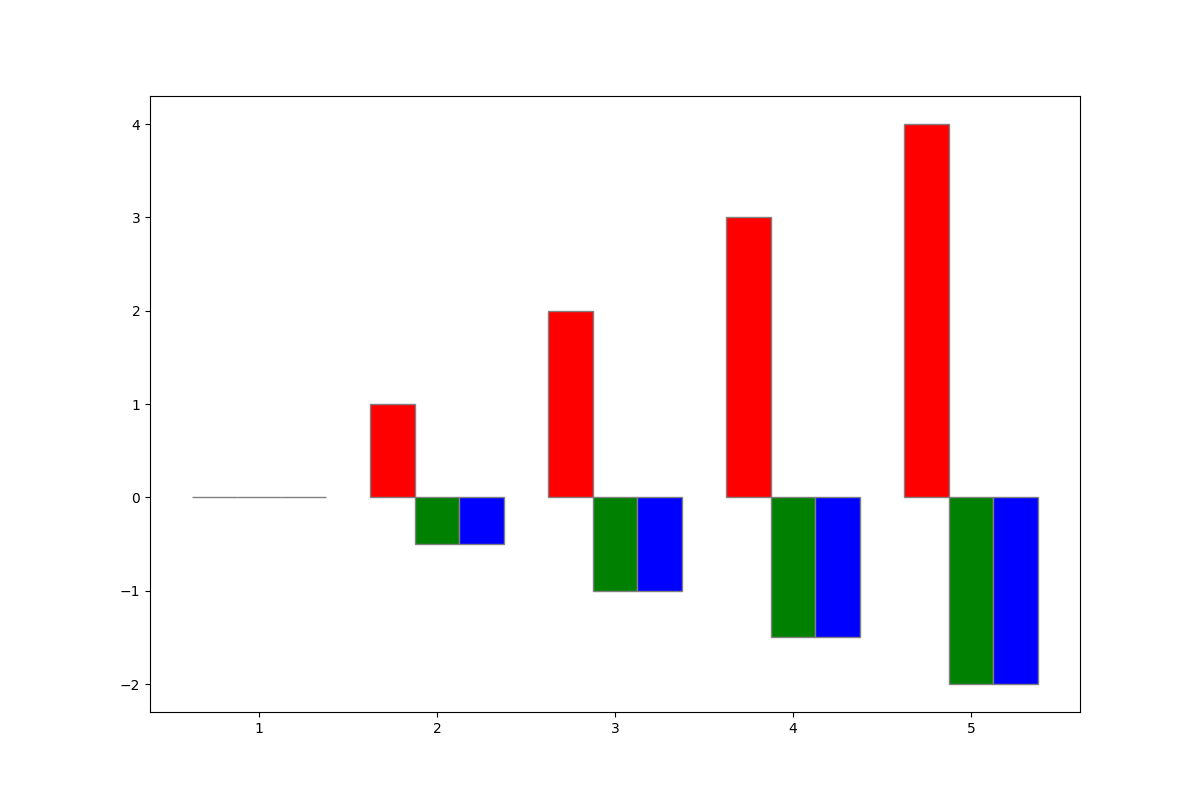}
    \includegraphics[width=0.49\textwidth]{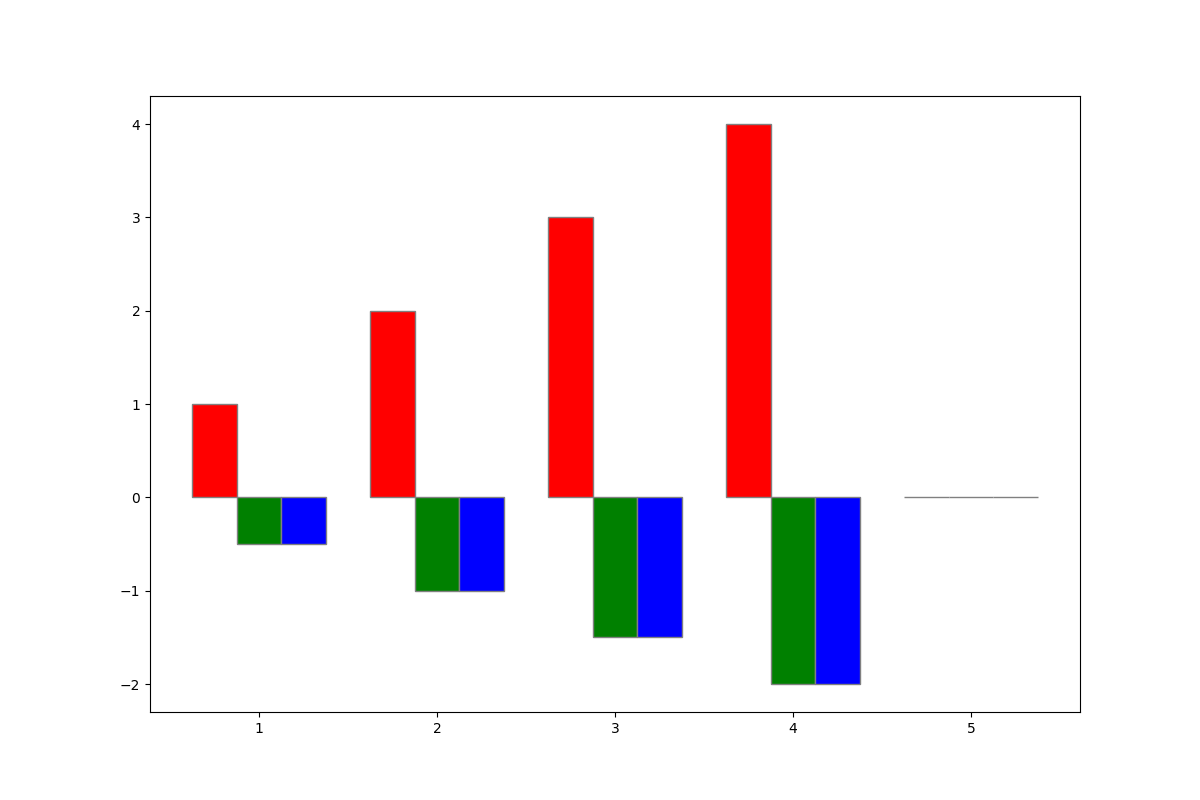}
    \caption{The representation of social pressures of two different elements of the ladder sets for $N=5$ and $M=3$. The red, green and blue bars represent the social pressures of each actor for the opinions $1,2$ and $3$, respectively. In the left figure, actor $1$ has social pressure $0$ for all opinions, actor 2 has social pressure $1$ for opinion $1$ and $-1/2$ for opinions $2$ and $3$, actor 3 has social pressure $2$ for opinion $1$ and $-1$ for opinions $2$ and $3$, and so on.
    Note that the matrix of social pressures on the right figure is obtained from the matrix of social pressures on the left figure when actor 5 expresses opinion $1$, which is the most probable event.}
    \label{fig:fig1}
\end{figure}

To illustrate Theorems \ref{Theorem 3} and \ref{Theorem 4}, it is useful to go beyond the microscopic description in terms of social pressures $u(a,o)$. We thus introduce aggregate macroscopic observables of social dynamics.

For each opinion $o \in \mathcal{O}$ and configuration
$u \in \mathcal{S}$, the \textbf{public opinion} toward $o$ is defined as
the total pressure exerted by the population:
\[
    P_o(u) \;=\; \sum_{a \in \mathcal{A}} u(a,o).
\]

Figure \ref{metastableConsensusM=4} provides an illustration of Theorem \ref{Theorem 3}. The plot shows the evolution of the public opinion vector in the case $M=4$. One observes long time intervals during which a single opinion dominates and the system remains close to a consensus set. These plateaus correspond to the predicted metastable regimes. Eventually, abrupt transitions occur, and a different opinion becomes dominant, leading the system to a new metastable consensus. The alternation between stability over long periods and sudden switches exemplifies the behavior formalized by the metastability theorem.

\begin{figure}[H]
    \centering
    \includegraphics[width=\textwidth]{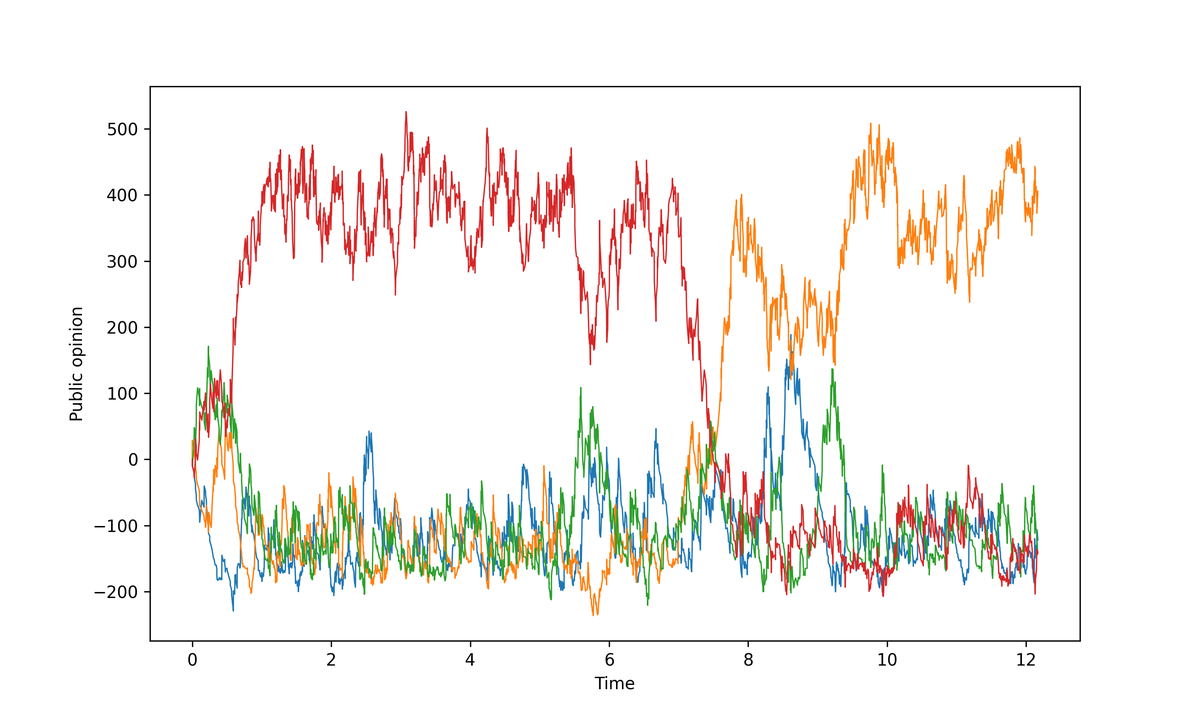}
    \caption{Time evolution of the public opinion vector in the case $M=4$. Each line represents the public opinion towards each of the $4$ opinions. In this simulation, in a first moment the opinion associated with the red color become dominant, describing a moment in which the social pressure of the actors are in general positive for this opinion and the majority of the expressed opinions in the network follow this direction. Suddenly, this situation is replaced by the dominance of the opinion associated with the orange color, illustrating the metastable behavior of the system.}
    \label{metastableConsensusM=4}
\end{figure}

Now, we turn to Theorem \ref{Theorem 4}. Actually, it was mainly through numerical simulations with different values of $\alpha$ that we first caught a glimpse of a phase transition. And we wish to share some of the logical progression of our exploration. First, let us quickly describe the case $\alpha\geq 0$.  

For values of $0\leq\alpha<1+\frac{1}{M-1}$, actors are still able to express some preference for the opinion they support, so a dominant opinion may still emerge. Note that the notion of consensus needs to be adapted to the state space $\st^{\alpha}$, but the main features are the same. When $\alpha\geq1+\frac{1}{M-1}$, we have a degenerate situation in which any expression yields more social pressure towards opinions other than the one supported by the expression. This prevents the formation of any lasting consensus. 

We now move on to the more interesting case of negative communication bias. In Figure \ref{fig:lossofconsensus}, we still observe transitions of consensus as Theorem \ref{Theorem 3} would predict in the case $\alpha=0$, but at some point the dynamics change. 
\begin{figure}[H]
    \centering
    \includegraphics[width=\textwidth]{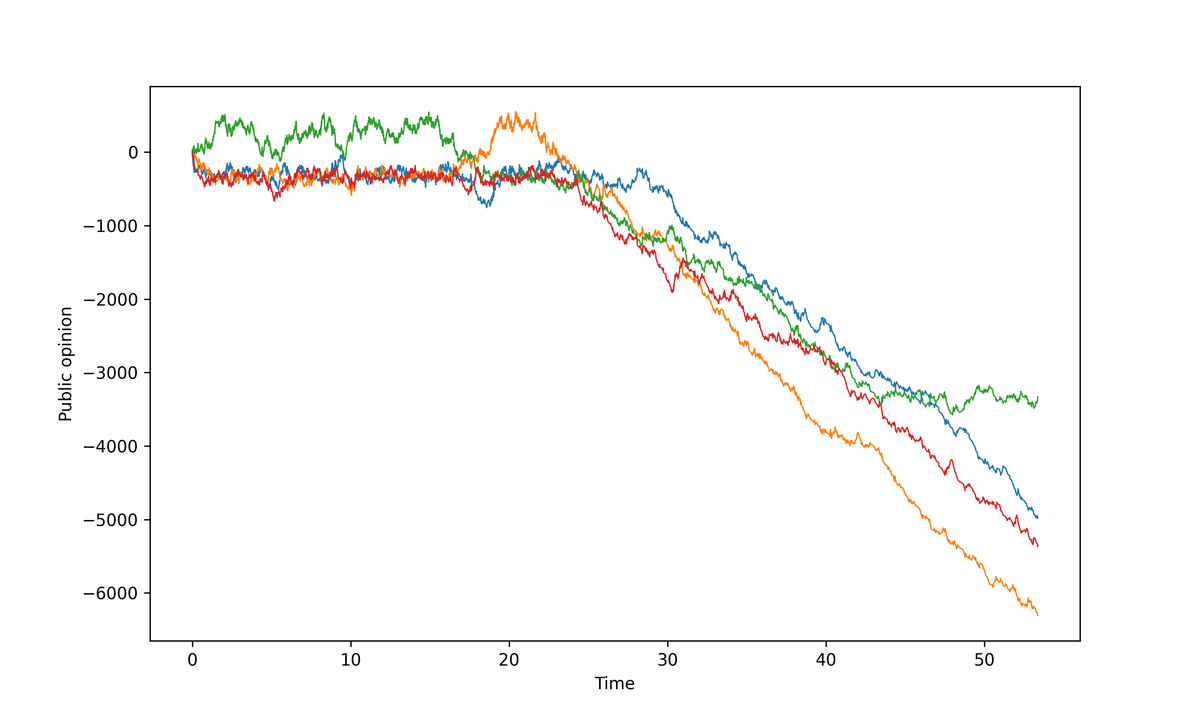}
    \caption{Time evolution of the public opinion vector in the case $M=4$ and $\alpha=-\frac{1}{M-1}$. Each line represents the public opinion towards each of the $4$ opinions. In this simulation, in a first moment the system exhibits a metastable behavior similar to the one displayed in Figure \ref{metastableConsensusM=4}. Suddenly, this situation is replaced by the decay of the public opinion for all opinions.}
    \label{fig:lossofconsensus}
\end{figure}

We may observe two distinct behaviors in this figure. In the first half, we see the formation of consensus, although the transition to a new consensus is not as sharp as in Figure \ref{metastableConsensusM=4}. However, in the second half of the graph, something changes. There is a new period without consensus, except here no opinion manages to recover before the end of the simulation. Instead, opinions suddenly start a negative cycle in which their public opinion decreases over time. At this stage, it is not yet clear what is causing this phenomenon. 

To get a better grasp of the situation, we shall define a new macroscopic observable. For each actor $a \in \mathcal{A}$ and configuration $u \in \mathcal{S}$,
the \textbf{trust} of actor $a$ is defined as the total pressure allocated
by this actor to all possible opinions:
\[
    T_a(u) \;=\; \sum_{o \in \mathcal{O}} u(a,o).
\]

\begin{remark}
Before the addition of communication bias, we have $T_a(u)=0$ for all $a \in \mathcal{A}$, so that trust vanishes identically. This reflects the normalization originally built into the model, ensuring that each actor’s support for some opinions is exactly offset by reduced support for the others. 
\end{remark}

The new dynamics at play is one where the actors enter an irreversible spiral of negativity. Their constantly decreasing social pressure for all opinions can make them stop expressing opinions permanently. 
The time evolution of the actors' trust is displayed in Figure \ref{fig:lossoftrust}. Theorem \ref{Theorem 4} provides a
complete understanding of this phenomenon, as it states that the observed behavior is indeed irreversible.

\begin{figure}[H]
    \centering
    \includegraphics[width=\textwidth]{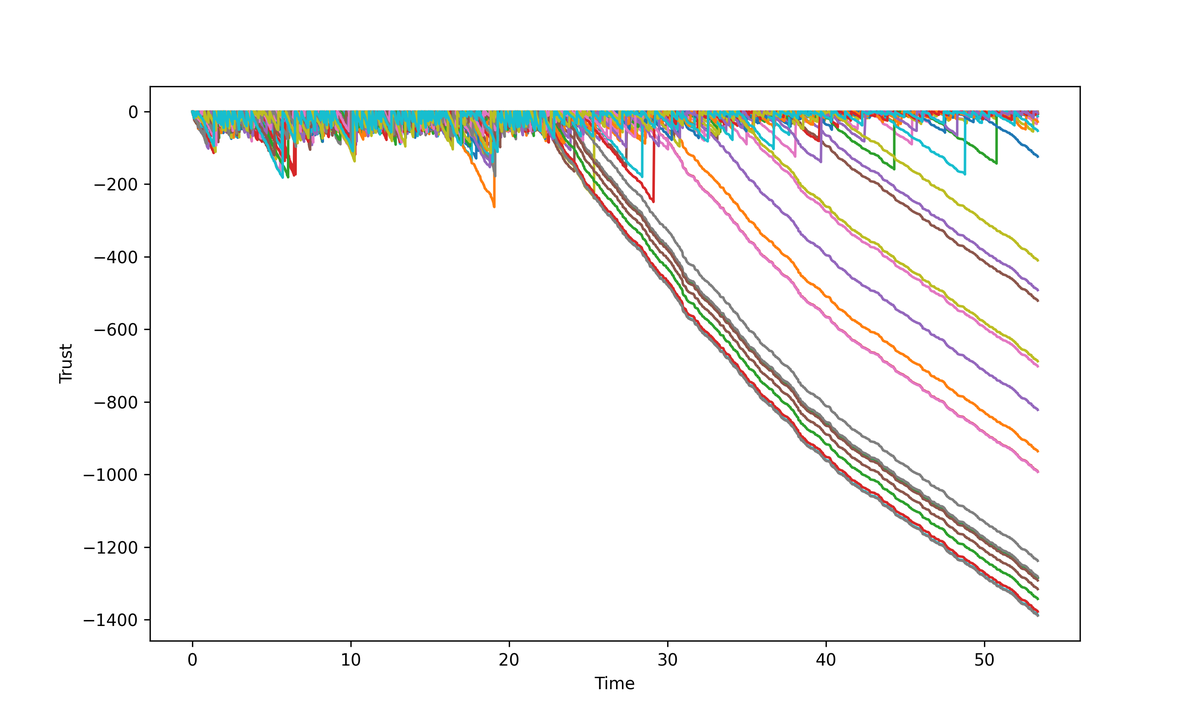}
    \caption{Time evolution of the trust in the case $M=4$ and $\alpha=-\frac{1}{M-1}$. Each line represents the trust of each one of the $N=30$ actors. The simulation is the same as the one illustrated in Figure \ref{fig:lossofconsensus}, revealing that the decay of public opinion is caused by the successive
    loss of trust of the actors.}
    \label{fig:lossoftrust}
\end{figure}

\section{Proofs} \label{sec:proofs}

Let us begin this section with some initial results that describe the basic behavior of the system. The following proposition is the basis of the proof of Part 1 of Theorem \ref{Theorem 1}. We show that among each $N$ consecutive opinions expressed in the system, at least one comes from an actor with low social pressure on the expressed opinion.

\begin{proposition}\label{prop1}
    For any starting matrix $u\in\st$, we have:
    $$
        \inf\{n\geq 1: \|U^{\beta,u}_{T_{n-1}}(A_n,\cdot)\|_\infty<N \} \leq N.
    $$
\end{proposition}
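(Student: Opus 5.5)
Our plan is to argue by contradiction, using the fact that a row with large sup-norm requires many expressions to have been heard since that actor's last reset. We first record a bound on the size of a row: if actor $b$ has heard $k$ expressions since its last reset, then every entry satisfies $|u(b,p)|\le k\max(1,\frac{1}{M-1})\le k$, since each heard expression changes every entry by $+1$ or $-\frac1{M-1}$. For the initial matrix $u\in\st$ this needs some care, because $u$ is arbitrary. We will therefore use the constraint $\min_a\|u(a,\cdot)\|_\infty=0$: there is at least one actor $a_0$ whose row is identically zero at time $0$.

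Suppose that for every $n\in\{1,\dots,N\}$ we have $\|U_{T_{n-1}}(A_n,\cdot)\|_\infty\ge N$. We first claim that the actors $A_1,\dots,A_N$ are pairwise distinct. Indeed, suppose $A_n=A_m=b$ with $n<m\le N$ and $m$ the first repetition after $n$. At time $T_n$ the row of $b$ is reset to zero. Between $T_n$ and $T_{m-1}$ it hears at most $m-1-n\le N-2$ expressions. By the bound above, $\|U_{T_{m-1}}(b,\cdot)\|_\infty\le N-2<N$, which is a contradiction.

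Hence $\{A_1,\dots,A_N\}=\A$, so the actor $a_0$ with zero initial row speaks at some step $n\le N$. Before that step it has heard only $n-1\le N-1$ expressions, each changing its entries by at most $1$ in absolute value. Therefore $\|U_{T_{n-1}}(a_0,\cdot)\|_\infty\le N-1<N$, again a contradiction. This proves that the infimum is at most $N$.

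The only delicate point is the per-step increment bound. Since $M\ge2$, we have $\frac1{M-1}\le1$, so each entry moves by at most $1$ per heard expression, and entries start at $0$ after a reset or at time $0$ for $a_0$. Everything else is pigeonhole. I expect no real obstacle beyond making sure the initial arbitrary rows of $u$ are handled through the zero row guaranteed by the definition of $\st$.
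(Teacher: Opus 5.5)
Your proof is correct and follows the paper's own pigeonhole argument: the zero row of $a_0$ and the reset of each speaker's row mean no such actor can reach sup-norm $N$ within the first $N$ steps, whether before its first expression or before its next one. The paper collects these facts into $N+1$ distinct actors $\{a_0,A_1,\ldots,A_N\}$, whereas you first show that $A_1,\ldots,A_N$ are distinct and then that $a_0$ must be among them; this is the same argument in a different order.
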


\begin{proof}
    Suppose by contradiction that $\|U^{\beta,u}_{T_{n-1}}(A_n,\cdot)\|_\infty\geq N$ for all $n=1,\ldots, N$. The initial matrix of social pressures $u$ belongs to $\st$. Therefore, there exists $a_0\in\mathcal{A}$ such that $u(a_0,\cdot)=0$. Thus, by definition, for any $m\geq 1$:
    $$
        \|U^{\beta,u}_{T_m}(a_0,\cdot) \|_\infty \leq m.
    $$
    This implies that $A_n\neq a_0$, for all $n=1,\ldots,N$. By definition $U^{\beta,u}_{T_1}(A_1,\cdot) =0$. For the same reasons described above, we conclude that $A_n\neq A_1$, for all $n=2,\ldots,N$. Iterating this step, we are led to conclude that the set $\{a_0,A_1,\ldots, A_N\}$ is made of $N+1$ distinct elements which is impossible, since $|\A|=N$. With this we conclude the proof.
\end{proof}

Let us consider the following definition. For a fixed $u\in\st$ and $n\geq 1$, we define:
    $$
        \xi_n^u:=\{(A_n,O_n)\in\arg\max_{(a,o)\in\mathcal{A}\times\mathcal{O}}U^{\beta,u}_{T_{n-1}}(a,o)\}. 
    $$
In event $\xi_n^u$, the system with initial social pressure $u$ chooses the $n$-th pair of actor/opinion in the most likely way. This means that $(A_n,O_n)$ is one of the pairs that maximize the social pressure at time $T_{n-1}$. The next two propositions describe the consequences on the system of the successive occurrence of these events.

\begin{proposition} \label{prop2}
For any $u\in \st$, the following holds
$$
 \bigcap_{j=1}^N\xi_j^u\subset\{-MN<U^{\beta,u}_{T_N}<N\}.
$$
\end{proposition}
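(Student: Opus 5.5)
The approach is to show that on $\bigcap_{j=1}^N\xi_j^u$ the actors $A_1,\ldots,A_N$ are pairwise distinct, so that every actor has expressed an opinion by time $T_N$. The two bounds then follow from counting what each actor has heard since it last spoke.

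\emph{Reduction.} Suppose $A_1,\ldots,A_N$ are distinct, hence $\{A_1,\ldots,A_N\}=\A$. For each $a=A_k$, the row $U^{\beta,u}_{T_N}(a,\cdot)$ is obtained from the zero row by applying the $N-k\le N-1$ operators $\pi^{A_{k+1},O_{k+1}},\ldots,\pi^{A_N,O_N}$. Each of these adds $1$ or $-1/(M-1)$ to every entry of that row. Hence every entry lies in $[-(N-1)/(M-1),\,N-1]$, which is contained in $(-MN,N)$. Note that this step does not use the argmax property.

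\emph{Distinctness.} This is where $\xi_j^u$ is used, in the spirit of Proposition \ref{prop1}. Suppose $A_j=A_i=:a$ with $i<j\le N$, and take $j$ minimal. Let $B$ be the set of actors that do not express at any of the steps $i+1,\ldots,j-1$. For every $b\in B\setminus\{a\}$, the increments received between $T_i$ and $T_{j-1}$ are the same for $b$ as for $a$. Since $U_{T_i}(a,\cdot)=0$, this gives
\[
U_{T_{j-1}}(b,\cdot)=U_{T_{j-1}}(a,\cdot)+U_{T_i}(b,\cdot).
\]
If some such $b$ has $U_{T_i}(b,\cdot)\neq 0$, then, because rows sum to zero, there is an opinion $p$ with $U_{T_i}(b,p)>0$. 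One must then compare $U_{T_{j-1}}(b,p)$ with the maximal entry $U_{T_{j-1}}(a,O_j)$ of $a$'s row, not only with $U_{T_{j-1}}(a,p)$. Actors outside $B$ have spoken after $T_i$, so they have heard fewer expressions than $a$; their rows should be dominated in the same way.

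\emph{Main obstacle.} The difficulty is that $\arg\max$ allows ties, for instance when $u=0$, or when the actor $a_0$ with $u(a_0,\cdot)=0$ stays silent. In that case the displayed identity only shows that $b$ ties with $a$ in some column, and the pair $(a,O_j)$ may still be a legitimate maximiser. I would try two fixes. The first is to refine the comparison to show a strict excess $U_{T_{j-1}}(b,p)>\max_o U_{T_{j-1}}(a,o)$ whenever $U_{T_i}(b,\cdot)\neq0$, using that the positive part of $U_{T_i}(b,\cdot)$ is at least $1/(M-1)$ in the lattice $\mathbb{Z}+(1/(M-1))\mathbb{Z}$. The second is to avoid full distinctness. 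For this, bound the maximal entry $m_n:=\max_{(a,o)}U_{T_n}(a,o)$ directly: a repeat only occurs when all remaining rows are comparable to $a$'s row, and $a$'s row has heard fewer than $N$ expressions, which keeps $m_N<N$. In this second route the lower bound comes for free, since a row summing to zero with all entries $\le m_N<N$ has every entry $\ge -(M-1)m_N>-MN$. This explains the constant $MN$ in the statement, and it suggests that the paper's intended proof controls the maximum entry rather than proving exact distinctness.
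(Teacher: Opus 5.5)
\textbf{The distinctness claim is false, so your main route cannot be completed.} On $\bigcap_{j=1}^N\xi_j^u$ the actors $A_1,\ldots,A_N$ need not be pairwise distinct. Take $N=3$ and
$u(1,\cdot)=(1,-\frac{1}{M-1},\ldots,-\frac{1}{M-1})$, $u(2,\cdot)=(-1,\frac{1}{M-1},\ldots,\frac{1}{M-1})$, $u(3,\cdot)=0$, so that $u\in\st$.
\begin{itemize}
\item At time $0$, the pair $(1,1)$ is a maximiser.
\item After $\pi^{1,1}$, rows $1$ and $2$ are both zero, and $(3,1)$ is the unique maximiser.
\item After $\pi^{3,1}$, rows $1$ and $2$ coincide, both equal to $(1,-\frac{1}{M-1},\ldots)$. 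So $(1,1)$ is again a maximiser.
\end{itemize}
Hence $A_3=A_1$ happens with positive probability inside $\xi_1^u\cap\xi_2^u\cap\xi_3^u$.

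In your notation, the silent actor $b=2$ has $U_{T_1}(2,\cdot)=0$. This is exactly the tie your first fix cannot exclude. The ``strict excess'' that fix aims for also fails in general, because $U_{T_{j-1}}(a,p)$ can lie far below $\max_o U_{T_{j-1}}(a,o)$. So the repeated case has to be handled, not ruled out.

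\textbf{Your second route is the right one and matches the paper's argument, but it is missing its key quantitative step.} Knowing only that $a$ has heard fewer than $N$ expressions does not bound $m_N$. What you need is the following.
\begin{itemize}
\item Let $A_j=A_i$ with $i<j\le N$ be the first repeat. Since $T_i$, actor $A_j$ has heard only $j-1-i\le j-2$ expressions, so every entry of its row at $T_{j-1}$ is at most $j-2$.
\item By $\xi_j^u$, the pair $(A_j,O_j)$ is a global maximiser. Hence every entry of $U_{T_{j-1}}$ is at most $j-2$, including the rows of actors who never spoke.
\item Each operator raises any entry by at most $1$. A reset gives $0$, which does not exceed the current maximum since rows sum to zero. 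So after the remaining $N-j+1$ jumps, $m_N\le (j-2)+(N-j+1)=N-1$.
\end{itemize}
Your reduction covers the no-repeat case. In both cases the zero row sum then gives every entry $\ge -(M-1)m_N>-MN$, which completes the proposition.
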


Here and in what follows, inequalities between a matrix and a real number are understood entrywise.

\begin{proof}
First note that if $|\{A_1, \ldots, A_N\}|=N$, 
by using the same logic as in Proposition \ref{prop1}, we have that $\|U^{\beta,u}_{T_N}\|_\infty<N$. Moreover, if $|\{A_1, \ldots, A_N\}|< N$ and $\bigcap_{j=1}^N\xi_j^u$ occurs, then there exists a time $m \in \{1, \ldots, N\}$ such that an actor repeats itself, so
$$
U_{T_{m-1}}^{\beta,u}(A_m,o) < m-1
$$
and actually
$$
U_{T_{m-1}}^{\beta,u}(a,o) < m-1
$$
for all $a \in \A$ and $o\in \opn$. This implies that
$$
\max\{U^{\beta,u}_{T_N}(a,o):a\in \A,o\in \opn\} < N, 
$$
and since $\sum_{o\in \opn}U^{\beta,u}_{T_N}(a,o)=0$, for any $a\in \A$, we conclude the proof.
\end{proof}

\begin{proposition}\label{prop3}

For any initial matrix $u\in\st$, 
$$
  \left\{\bigcap_{j=1}^{(M+1)N}\xi_j^u\right\} \subset \left\{U^{\beta,u}_{T_{(M+1)N}}\in\mathcal{L} \right\}.
$$
\end{proposition}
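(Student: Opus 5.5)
The plan is to track the greedy dynamics on the event $\bigcap_{j=1}^{(M+1)N}\xi_j^u$ in blocks of $N$ steps (``rounds'') and show that the window of recently expressed opinions shrinks until only one opinion remains. On the first round I use Proposition \ref{prop2}, together with the distinct-actor argument of Proposition \ref{prop1}, to reach a structured regime after which every row is explained by the recent history. On the following $M-1$ rounds I show that the set of opinions present in the recent history loses an element in each round until it is a singleton. The last round then produces a ladder configuration. This accounts for $(1+(M-1)+1)N=(M+1)N$ steps.

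\emph{Structure of rows.} Once actor $b$ has expressed at least once, say $n_b$ expressions ago, of which $c_p(b)$ concerned opinion $p$, its row is
\[
U(b,p)=c_p(b)-\frac{n_b-c_p(b)}{M-1}=\frac{Mc_p(b)-n_b}{M-1}.
\]
I first show that under greedy choices every actor expresses within each round, and that in the structured regime the actors cycle. The first step is to argue, as in Proposition \ref{prop1}, that an actor who has just reset cannot become the maximizer again before the others have expressed. Its entries are at most its number of heard expressions. Meanwhile, the oldest actor has heard a superset of the same expressions, so by the formula above it dominates in the column of the most frequent opinion. After this, the histories of the actors are nested suffixes of a single opinion sequence. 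The greedy rule then says that the next expressed opinion is the one maximizing $Mc_p-n$ over these nested windows, and in particular it is the most frequent opinion in the longest window, possibly up to ties.

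\emph{Elimination step (main obstacle).} The key claim is the following. Suppose that at the start of a round the last $N-1$ expressions contain a set $P$ of opinions with $|P|\geq 2$. Then during the round some opinion of $P$, namely one of minimal count in the window, is never chosen by the greedy rule. Since each actor's history is refreshed within the round, that opinion then disappears from all histories. To prove this I will compare, for each step of the round, the value $Mc_p(b)-n_b$ of the current maximizer with that of a minimal-count opinion. An opinion that is not chosen only loses relative frequency, because each chosen opinion gains one occurrence in every other actor's window, while unchosen ones gain none. So a minimal-count opinion stays strictly dominated. Ties must be handled carefully, since $\arg\max$ allows any maximizer. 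I will use the nested-window structure to show that a tie is only possible between opinions of equal count in the longest window, and that in that case whichever is chosen breaks the tie against the other for the rest of the round. This is the step I expect to be the most delicate. After at most $M-1$ such rounds, the window is monochromatic in some opinion $o$.

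\emph{Ladder formation.} In the final round, every expression is $(b,o)$ with $b$ the oldest actor, and the actors cycle. Consequently, at time $T_{(M+1)N}$ the $N$ actors have heard $0,1,\dots,N-1$ expressions, all of opinion $o$. The row formula then gives $U(\cdot,o)$ taking the values $\{0,\dots,N-1\}$ and $U(\cdot,p)=-U(\cdot,o)/(M-1)$ for $p\neq o$, which is exactly membership in $\mathcal{L}^o\subset\mathcal{L}$.
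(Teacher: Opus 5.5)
\textbf{There is a genuine gap.} It appears before your elimination step starts. The ``structured regime'' you want after the first round is not reached in general. That regime requires that every actor has spoken, that rows are increment vectors of nested suffixes of one opinion sequence, and that each actor speaks exactly once per round.

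Take $N=4$, $M=3$ and initial rows $X=(0,0,0)$, $Y=(3,-\frac{3}{2},-\frac{3}{2})$, $Z=(2,-1,-1)$, $b=(-3,2,1)$. Every argmax along the way is unique, so the first four greedy steps are forced: $Y,Z,X,Y$, all expressing opinion $1$. At $T_4$ actor $b$ has never spoken. Its row $(1,0,-1)$ is not of the form $(n,-\frac{n}{2},-\frac{n}{2})$ produced by any suffix of the history. So the just-reset actor $Y$ does become the maximizer again before everyone has spoken.

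Your justification also fails in general. For $M=3$, an actor with window $(q)$ has maximal entry $1$, while an older actor with window $(p,q)$ has maximal entry $\frac{1}{2}$. With $b=(-4,\frac{5}{2},\frac{3}{2})$ instead, the greedy steps are again forced, and $b$ is still silent with an unexplained row at $T_6$. Such silent actors carry arbitrary initial pressure, possibly positive in columns outside your set $P$. Your nested-window comparison never accounts for them.

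The elimination lemma itself, the core of your argument, is only announced. The tie rule you plan to use is false as a statement about nested windows. For $M=3$ and history $p,p,p,q,q$, the suffix $(q,q)$ gives value $2$ in column $q$, and the full window gives value $2$ in column $p$. Yet the counts in the longest window are $3\neq 2$. Excluding such configurations would need invariants of the greedy dynamics that you do not supply.

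\textbf{The missing idea is the paper's: use the first repetition time $\tau(u)\le N+1$ instead of fixed rounds.}
\begin{enumerate}
\item On $\xi_{\tau}^u$, the maximizer $A_\tau$ has already spoken. Its value $m$, attained in column $o=O_\tau$, is therefore at most the number of expressions it has heard.
\item The distinct actors who spoke after $A_\tau$ have rows that differ consecutively by one increment vector of sup-norm $1$. This gives a chain of distinct actors whose column-$o$ values, after step $\tau$, are at least $j+(m-\lfloor m\rfloor)$ for $j=1,\dots,\lfloor m\rfloor$.
\item Every entry was at most $m$ at time $T_{\tau-1}$. Hence after step $\tau$ every entry outside column $o$ is at most $m-\frac{1}{M-1}$, and this bound covers the silent actors too. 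This gives $\tilde U_\tau\in\st^o$ (Lemma~\ref{appendix_lemma1}).
\item From then on the greedy rule only selects $o$. There is no gradual elimination: one opinion wins at once.
\item The off-$o$ entries of actors that have not yet reset drop by $\frac{1}{M-1}$ per step. After $(M-1)(n(u)+1)-1\le (M-1)N-1$ steps the state is in $\mathcal{C}^o$, with column $o$ nonnegative by the zero row sums (Lemma~\ref{appendix_lemma2}).
\item Then $N$ greedy steps inside $\mathcal{C}^o$ line up $\mathcal{L}^o$.
\end{enumerate}
The budget $(N+1)+((M-1)N-1)+N=(M+1)N$ coincides numerically with yours. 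However, its $(M-1)N$ part is the draining time of exactly those silent actors, not $M-1$ rounds each eliminating one opinion.
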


The proof follows the main ideas of Part 2 of Proposition 5 of \cite{galves2024fastconsensusmetastabilityhighly}. By considering this more general model, we have more technical difficulties. We present this proof in Appendix \ref{appendix1}.

The next proposition gives a lower bound for the probability of the successive occurrence of the events $\xi_n^u, n\geq 1$. The proof is similar to the proof of Proposition 5 of \cite{galves2024fastconsensusmetastabilityhighly}.

\begin{proposition}\label{prop4}
    For all $ m\geq 1,$
    $$ \mathbb{P}\left(\bigcap^m_{j=1}\xi_j^u\right)\geq (\zeta_{\beta})^m,
    $$
    where
    $$
        \zeta_{\beta}=\frac{e^{\frac{\beta}{M-1}}}{e^{\frac{\beta}{M-1}}+MN}\rightarrow1\text{, as }\beta\rightarrow+\infty.
    $$
\end{proposition}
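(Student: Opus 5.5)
The plan is to show that, conditionally on the past up to the $(j-1)$-th jump, the probability that the $j$-th jump is a maximizing pair is at least $\zeta_\beta$, whatever the current state in $\st$. The bound for $\bigcap_{j=1}^m\xi_j^u$ then follows by multiplying these conditional bounds along the embedded jump chain.

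First I would check that $\st$ is stable under every operator $\pi^{a,o}$, so that $U^{\beta,u}_{T_{j-1}}\in\st$ for every $j$.
\begin{itemize}
\item The expressing actor's row becomes $0$, so $\min_a\|u(a,\cdot)\|_\infty=0$ is preserved.
\item Every other row changes by $+1$ in one entry and by $-\frac{1}{M-1}$ in the remaining $M-1$ entries, so each row sum stays $0$.
\item All entries stay in $\mathbb{Z}+\frac{1}{M-1}\mathbb{Z}=\frac{1}{M-1}\mathbb{Z}$.
\end{itemize}
The last point is the key structural fact: every entry of a state in $\st$ lies on the lattice $\frac{1}{M-1}\mathbb{Z}$. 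Hence if $m^*=\max_{(a,o)}v(a,o)$ for some $v\in\st$, every pair $(a,o)$ that is not a maximizer satisfies $v(a,o)\le m^*-\frac{1}{M-1}$.

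Next, I fix a state $v\in\st$ and let $K\ge1$ be the number of maximizing pairs. By the jump-chain description of the generator \eqref{generator:withoutbias}, the next pair is $(a,o)$ with probability proportional to $e^{\beta v(a,o)}$. So the probability that the next pair is a maximizer equals
$$
\frac{Ke^{\beta m^*}}{Ke^{\beta m^*}+\sum_{(a,o)\text{ non-max}}e^{\beta v(a,o)}}.
$$
There are at most $MN$ non-maximizing pairs, each contributing at most $e^{\beta m^*}e^{-\beta/(M-1)}$ to the sum. Since $x\mapsto x/(x+y)$ is increasing in $x$ and $K\ge1$, the probability is at least
$$
\frac{e^{\beta m^*}}{e^{\beta m^*}+MNe^{\beta m^*}e^{-\beta/(M-1)}}=\frac{e^{\beta/(M-1)}}{e^{\beta/(M-1)}+MN}=\zeta_\beta .
$$

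Finally, by the strong Markov property at the jump times $T_{j-1}$, i.e. the Markov property of the embedded chain $(U_{T_n})_n$, we get
$$
\mathbb{P}\Bigl(\xi_j^u\,\Big|\,\bigcap_{i<j}\xi_i^u\Bigr)\ge\zeta_\beta .
$$
The reason is that the conditioning event is measurable with respect to the chain up to step $j-1$, and the bound above is uniform over states in $\st$. Iterating from $j=1$ to $m$ yields $(\zeta_\beta)^m$, and $\zeta_\beta\to1$ as $\beta\to\infty$ is immediate.

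The only delicate point is the lattice gap $\frac{1}{M-1}$ between the maximum and every other entry. Without it, the crude bound would only give $1/(MN)$ per step, and the whole argument rests on it.
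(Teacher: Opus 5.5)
Your proposal is correct and follows essentially the same route as the paper. The paper also conditions on the state $v$ at time $T_{m-1}$, writes $\mathbb{P}(\xi_1^v)$ as the ratio of the maximizing weight $|Y(v)|e^{\beta y(v)}$ to the total jump rate, and uses that every entry of $v\in\st$ lies in $\frac{1}{M-1}\mathbb{Z}$, so each non-maximal entry is at least $\frac{1}{M-1}$ below the maximum. Your explicit check that $\st$ is stable under every $\pi^{a,o}$ only spells out a fact the paper takes for granted.
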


\begin{proof}
    We first remark that
    \begin{equation}\label{eq1prop11}
\mathbb{P}\left(\bigcap^m_{j=1}\xi_j^u\right)=\sum_{v\in\st}\mathbb{P}\left(\bigcap^{m-1}_{j=1}\xi_j^u,U^{\beta,u}_{T_{m-1}}=v\right)\mathbb{P}\left(\xi_m^u|\bigcap_{j=1}^{m-1}\xi_j^u, U^{\beta,u}_{T_{m-1}}=v\right).
\end{equation}

In the following, let us obtain a lower bound for $\mathbb{P}\left(\xi_m^u|U^{\beta,u}_{T_{m-1}}=v\right)=\P(\xi_1^v)$, for any $v\in \st$. Note that in the case $v=0$, the probability $\P(\xi_1^v)$ is maximized.
For any $v \in \st \setminus\{0\}$,
$$
\P(\xi_1^v)=\frac{|Y(v)|e^{\beta y(v)}}{|Y(v)|e^{\beta y(v)}+ \displaystyle\sum_{b\in \A}\ \sum_{\quad o\in \opn:\ v(b,o)< y(v)}e^{\beta v(b,o)}},
$$
where
$
y(v)=\max\{v(a,o): a\in \A, o\in \opn\}
$
and
$
Y(v)=\{(a,o) \in \A\times \opn: v(a,o)=y(v)\}.
$ 
From this point on, note that $|Y(u)|\geq 1$ and that, by \eqref{set_S}, every entry of $v$ lies in $\mathbb{Z}+\frac{1}{M-1}\mathbb{Z}=\frac{1}{M-1}\mathbb{Z}$. Hence, for any $b \in \A$ and $o\in\opn$ such that $v(b,o)< y(v)$, the difference $v(b,o)-y(v)$ is a strictly negative element of $\frac{1}{M-1}\mathbb{Z}$, so that $v(b,o)-y(v)\leq -\frac{1}{M-1}$. We finish the proof by rearranging the terms.
\end{proof}

\begin{remark} \label{remark:bernoulli}
By putting Proposition \ref{prop4}  together with Bernoulli's inequality, we have that for any $m\geq 1$ and for any $\beta\geq 0$
$$
(\zeta_{\beta})^m \geq 1-mMNe^{\frac{-\beta}{M-1}}.
$$
\end{remark}

\subsection{Proof of Theorem \ref{Theorem 1}} \label{subsec:T1}

To prove part 1 of this theorem, we will use the following strategy. We shall divide opinion expressions into two kinds. Those that come from actors with high social pressure, and the others coming from actors with low social pressure. Proposition \ref{prop1} tells us that opinion expressions of the second type must happen at least once every $N$ opinion expressions.

\begin{proof}
The jump times $\{T_n\}_n$ of process $(U^{\beta,u}_t)_t$ are the superposition of the jump times $\{T^<_n\}_n$, in which the actor expressing an opinion has social pressure smaller than $N$, and $\{T^>_n\}_n$, in which the actor expressing an opinion has social pressure greater than or equal to $N$. The jump times $\{T^<_n\}_n$ can be constructed in such a way that 
\begin{equation} \label{eq:jumpbounds}
\{T^{(M)}_n\}_n \subset\{T^<_n\}_n \subset \{T^{(\lambda)}_n\}_n,   
\end{equation}
where $\{T^{(M)}_n\}_n$ and $\{T^{(\lambda)}_n\}_n$ are the jump times of homogeneous Poisson point processes with rate $M$ and $\lambda:=NMe^{\beta N}$, respectively. Since 
$\mathbb{P}(\sum_{n\geq 1} \mathbbm{1}_{\{T_n^{(\lambda)} <t\}} <\infty)=1$ for any $t>0$, we have that $\mathbb{P}(\sum_{n\geq 1} \mathbbm{1}_{\{T_n^{<} <t\}} <\infty)=1$.
By Proposition \ref{prop1}, this implies that for any $t>0$, 
$\mathbb{P}(\sum_{n\geq 1} \mathbbm{1}_{\{T_n^> <t\}} <\infty)=1$.
Now, $\{T_n\}_n$ being the superposition of $\{T^<_n\}_n$ and $\{T^>_n\}_n$, we conclude that $\mathbb{P}(\sum_{n\geq 1} \mathbbm{1}_{\{T_n <t\}} <\infty)=1$.
Finally, considering the lower bound in \eqref{eq:jumpbounds}, we have that $\sup \{T^<_n\}_n=+\infty$. This allows us to conclude the proof.
\end{proof}

The construction described above is a simple modification of the one presented in the proof of part $1$ of Theorem 1 of \cite{galves2024fastconsensusmetastabilityhighly}.

We now prove part 2 of Theorem \ref{Theorem 1}.

\begin{proof}
Let $o\in\mathcal{O}, l_o\in\mathcal{L}^o$ where,
$$
\forall a\in\mathcal{A},p\in\mathcal{O}\setminus\{o\},
$$
$$
    l_o(a,o)=a-1,l_o(a,p)=-\frac{a-1}{M-1}.
$$
We remark that 
$$
    \forall u \in \st, l_o=\pi^{1,o}\circ \pi^{2,o}\circ\cdots\circ\pi^{N,o}(u).
$$
This implies that
\begin{equation} \label{escadalo}
    \mathbb{P}(U_{T_N}^{\beta,u}=l_o)\geq \P\left(\bigcap_{j=1}^N\{A_j=N-j+1,O_j=o\}\right)>0.
\end{equation}

We further argue that for any $u'\in\st$,
$$
    \mathbb{P}(U^{\beta,u}_{T_{n+2N}}=l_o|U^{\beta,u}_{T_n}=u')\geq
$$
$$
    \mathbb{P}(\|U^{\beta,u}_{T_{n+N}}\|_\infty<MN|U^{\beta,u}_{T_n}=u')\times\mathbb{P}(U^{\beta,u}_{T_{n+2N}}=l_o|U^{\beta,u}_{T_n}=u',\|U^{\beta,u}_{T_{n+N}}\|_\infty<MN).
    $$

By putting together Propositions \ref{prop2} and \ref{prop4}, we have that $\forall u\in\st$
$$
\mathbb{P}(\|U^{\beta,u}_{T_N}\|_\infty<MN)\geq\mathbb{P}(\bigcap_{j=1}^N\xi_j^u)\geq \zeta^N_\beta.
 $$
Denote
$$
\varepsilon^*:=\min\{\mathbb{P}(U^{\beta,v}_{T_{N}}=l_o):v\in\st,\|v\|_\infty<MN\}.
$$
Since the social pressures in the set $\{v\in\st:\|v\|_\infty<MN\}$ are bounded and by \eqref{escadalo}, the visit to the state $l_o$ is guaranteed by the successive occurrence of actors $1,2 \ldots, N$ expressing opinion $o$, we have that $\varepsilon^*>0$. Therefore,
$$
        \mathbb{P}(U^{\beta,u}_{T_{n+2N}}=l_o|U^{\beta,u}_{T_n}=u')\geq\zeta^N_\beta\varepsilon^*.
$$
This bound, uniform in $u'\in\st$, is a Doeblin condition. The skeleton chain $(U^{\beta,u}_{T_n})_n$ is therefore uniformly ergodic and
admits a unique invariant probability measure $\tilde{\mu}^\beta$.

For a non-explosive process, a probability measure is invariant for
$(U^{\beta,u}_t)_t$ if and only if its product with the jump rate is invariant
for the skeleton chain. Non-explosivity holds by Part~1, and since every
$u\in\st$ carries an actor with null social pressure, the jump rate obeys
$q_\beta(u)\geq M$ for all $u\in\st$. Hence
$$
    \sum_{u\in\st}\frac{\tilde{\mu}^\beta(u)}{q_\beta(u)}
    \;\leq\;\frac{1}{M}\sum_{u\in\st}\tilde{\mu}^\beta(u)
    \;=\;\frac{1}{M}\;<\;\infty,
$$
so that
\begin{equation}
    \mu^\beta(u)\;:=\;\frac{\tilde{\mu}^\beta(u)/q_\beta(u)}
    {\sum_{v\in\st}\tilde{\mu}^\beta(v)/q_\beta(v)},
    \qquad u\in\st, \label{eq:invariant_measure_def}
\end{equation}
is a well-defined probability measure, invariant for $(U^{\beta,u}_t)_t$. The
correspondence being a bijection between the stationary laws of the two
processes, $\mu^\beta$ is the unique invariant probability measure of
$(U^{\beta,u}_t)_t$, which proves Part~2 of Theorem~\ref{Theorem 1}.
    
\end{proof}

\subsection{Proof of Theorem \ref{Theorem 2}} \label{subsec:T2}

In order to prove Theorem \ref{Theorem 2}, we shall first prove a variant of it for the invariant measure of the skeleton process, which we define as follows.

\begin{definition}
For any $u\in \st$ and $\beta \geq 0$, the process $(\tilde{U}_n^{\beta,u})_{n=0,1,\ldots}$ given by $\tilde{U}_n^{\beta,u}:= U_{T_n}^{\beta,u}$ for any $n=0,1,\ldots,$ is the skeleton process associated to $(U_t^{\beta, u})_{t\geq 0}$. The unique invariant measure of this skeleton process is denoted $\tilde{\mu}^\beta$. Finally, we consider for any $\theta\subset \st$,
$$
\tilde{R}^{\beta,u}(\theta):=\inf\{n\geq 1: \tilde{U}_n^{\beta,u} \in \theta\}.
$$
\end{definition}

Proposition \ref{theo2prop1} proves that the invariant measure of the skeleton process gets concentrated in an extended set of steep ladders as $\beta \to \infty$. This set is defined as follows.

\begin{definition}\label{def:hatladder}
For any $o \in \mathcal{O}$, we define
\begin{multline*}
    \hat{\mathcal{L}}^{o} = \{ u \in \mathcal{S} : u(a,o) \in \mathbb{Z},\ \forall a \in \mathcal{A}, \\
    0 = u(a_1,o) < u(a_2,o) < \ldots < u(a_N,o),\ \{a_1,\ldots,a_N\} = \mathcal{A}, \\
    \text{and } \forall p \neq o,\ u(a,p) = -\tfrac{1}{M-1}\, u(a,o) \},
\end{multline*}
and we set $\hat{\mathcal{L}} = \bigcup_{o \in \mathcal{O}} \hat{\mathcal{L}}^{o}$.
\end{definition}

\begin{remark} \label{rem:hatladder}
The set $\hat{\mathcal{L}}$ is the set of states that can be reached by starting in $\mathcal{L}^{o}$, for $o\in \opn$, and sequentially expressing opinions $o$ in the network. With this, the social pressures in the direction of the opinion $o$ remain all different and are all integer numbers.
Note that $\mathcal{L}\subset \hat{\mathcal{L}}$ and for any $\hat{l} \in \hat{\mathcal{L}}$, 
$$\{U^{\beta,\hat{l}}_0(A_1,O_1)>0\} \subset \{ U^{\beta,\hat{l}}_{T_1} \in \hat{\mathcal{L}}\}.
$$
Moreover, for any $\hat{l}\in\hat{\mathcal{L}}$ and for a fixed $l\in \mathcal{L}$,
$$
\mathbb{P}(U^{\beta,\hat{l}}_0(A_1,O_1)>0)\geq \mathbb{P}(U^{\beta,l}_0(A_1,O_1)>0)\geq \eta,
$$
where,
    $$
        \eta = \ddfrac{\sum^{N-1}_{j=1}e^{\beta j}}{\sum^{N-1}_{j=0}e^{\beta j}+MN}.
    $$

\end{remark}

\begin{proposition}\label{theo2prop1}
     For any $\beta>0, u\notin\hat{\mathcal{L}}$, the following bound holds
    $$
        \Tilde{\mu}^\beta(u)\leq C'e^{-\beta(N-1)},
    $$
    where $C'=(NM)^{(M+1)N+1}>0$.
\end{proposition}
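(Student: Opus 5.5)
We will use the stationarity of the skeleton chain over a fixed number of steps. Set $K:=(M+1)N$. Since $\tilde{\mu}^\beta$ is invariant for $(\tilde U_n)_n$, for any $u\in\st$ we have
$$
\tilde{\mu}^\beta(u)=\sum_{v\in\st}\tilde{\mu}^\beta(v)\,\mathbb{P}(\tilde U_{K+1}^{\beta,v}=u)\leq \sup_{v\in\st}\mathbb{P}(\tilde U^{\beta,v}_{K+1}=u).
$$
It therefore suffices to show that, for every starting point $v$ and every $u\notin\hat{\mathcal{L}}$, the probability of being at $u$ after $K+1$ steps is at most $C'e^{-\beta(N-1)}$.

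Fix $v$ and decompose the event $\{\tilde U_{K+1}^{\beta,v}=u\}$ according to the first index $j\leq K+1$ at which the choice $(A_j,O_j)$ is not a maximizer, that is, the first $j$ with $\xi_j^v$ failing. If all of $\xi_1^v,\dots,\xi_K^v$ occur, then by Proposition \ref{prop3} we have $\tilde U_K\in\mathcal{L}\subset\hat{\mathcal{L}}$. A further most-likely step keeps the chain in $\hat{\mathcal{L}}$ by Remark \ref{rem:hatladder}, since the maximizer then has positive pressure. So on $\bigcap_{j\le K+1}\xi_j^v$ the chain is in $\hat{\mathcal{L}}$, which is not $u$. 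Hence $\{\tilde U_{K+1}=u\}\subset\bigcup_{j=1}^{K+1}(\xi_j^v)^c$. This reduces the problem to estimating $\mathbb{P}((\xi_j^v)^c\cap \{\tilde U_{K+1}=u\})$, or more crudely $\mathbb{P}((\xi_j^v)^c)$ for the relevant $j$.

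The hard step is that a single non-maximal choice only costs a factor of order $e^{-\beta/(M-1)}$ in general, as in Proposition \ref{prop4}, not $e^{-\beta(N-1)}$. The refinement I would try is to look only at the last stretch, once the chain is in the ladder regime. Split further on whether the chain has already entered $\mathcal{L}$ by time $K$ (this happens via the maximal path), and then bound the probability of leaving $\hat{\mathcal{L}}$ at step $K+1$ from a state of $\hat{\mathcal{L}}$. In a state $\hat l\in\hat{\mathcal{L}}^o$ the maximal rate is at least $e^{\beta(N-1)}$, because the pressures toward $o$ are distinct nonnegative integers, so the largest is at least $N-1$. The only moves that exit $\hat{\mathcal{L}}$ are moves with $U(A,O)\le 0$, namely opinion $o$ expressed by the zero-pressure actor, or any opinion $p\neq o$. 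Their total rate is at most $MN$. The exit probability from $\hat{\mathcal{L}}$ is thus at most $MN e^{-\beta(N-1)}$, i.e. $1-\eta$ in the notation of Remark \ref{rem:hatladder}.

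For the paths on which some earlier $\xi_j$ fails, I would bound crudely by summing over the at most $(NM)^{K+1}$ possible actor/opinion sequences of length $K+1$ that end at $u$. Each such sequence either contains a step of probability at most $MN e^{-\beta(N-1)}$ (the exit from a ladder), or passes through no ladder at all. In the latter case I would rerun the argument one more block of $K$ steps back, using invariance, to show that reaching $u$ still forces a costly step. A union bound then yields $(NM)^{K+1}\cdot e^{-\beta(N-1)}$, matching $C'$. The main obstacle is this last point: ruling out cheap paths to $u$ that avoid ladders. I expect it to be handled by arguing, via Proposition \ref{prop3}, that any path of $K+1$ steps consisting only of maximal moves hits $\mathcal{L}$. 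Consequently every path to $u\notin\hat{\mathcal{L}}$ must include a departure from $\hat{\mathcal{L}}$, which has rate ratio at most $MNe^{-\beta(N-1)}$, and the combinatorial count absorbs the rest.
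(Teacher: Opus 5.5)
Your reduction $\tilde\mu^\beta(u)\le\sup_{v}\mathbb{P}(\tilde U^{\beta,v}_{K+1}=u)$ is correct. So is your bound $MNe^{-\beta(N-1)}$ on the probability of leaving $\hat{\mathcal L}$ in one step. The gap is exactly the step you flag as the main obstacle, and your proposed way around it does not work.

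Proposition~\ref{prop3} only concerns paths made \emph{entirely} of maximal moves. So the inference ``every $K$ consecutive maximal moves end in $\mathcal L$, hence every path of length $K+1$ to $u\notin\hat{\mathcal L}$ must depart from $\hat{\mathcal L}$'' is invalid. A path may make a non-maximal move at some step $j\ge 2$ before ever entering $\hat{\mathcal L}$. After that only $K+1-j<K$ steps remain, and Proposition~\ref{prop3} says nothing about them. Such a move is only guaranteed to cost a factor $e^{-\beta/(M-1)}$, since entries lie in $\frac{1}{M-1}\mathbb{Z}$ as in Proposition~\ref{prop4}. Your union bound over paths therefore yields only $\sup_v\mathbb{P}(\tilde U^{\beta,v}_{K+1}=u)=O(e^{-\beta/(M-1)})$.

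Getting the uniform-in-$v$ bound at order $e^{-\beta(N-1)}$ for this window would require precise control of how long the maximal dynamics stays outside $\hat{\mathcal L}$ after a cheap deviation. Nothing in the paper provides this. Going back one more block does not close the gap either: on $2K+1$ steps, a path avoiding $\hat{\mathcal L}$ is only forced to contain two non-maximal moves, a cost of $e^{-2\beta/(M-1)}$.

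The window argument can be repaired, but only with a much longer window. Take $L=(N-1)(M-1)K$.
\begin{itemize}
\item A path of length $L$ that never visits $\hat{\mathcal L}$ must contain a non-maximal move in each of $(N-1)(M-1)$ disjoint blocks of $K$ steps, so its probability is at most $e^{-\beta(N-1)}$.
\item A path that visits $\hat{\mathcal L}$ and ends at $u$ must leave it by a move of pressure $\le 0$ from a state with maximal pressure $\ge N-1$. That move again has probability at most $e^{-\beta(N-1)}$.
\end{itemize}
Summing over the $(NM)^L$ paths gives $\tilde\mu^\beta(u)\le (NM)^{(N-1)(M-1)(M+1)N}e^{-\beta(N-1)}$. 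This has the right exponent but not the constant $C'$ in the statement.

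The paper sidesteps the uniform-in-$v$ issue entirely. By Kac's lemma, $1/\tilde\mu^\beta(u)=\mathbb{E}[\tilde R^{\beta,u}(u)]$, so it suffices to bound the return time from below along a single favourable scenario. With probability at least $\zeta_\beta^{(M+1)N}$, maximal moves carry the chain from $u$ to $\mathcal L$ without revisiting $u$. From there it remains in $\hat{\mathcal L}$ for a geometric number of steps with failure probability $1-\eta\le(1+MN)e^{-\beta(N-1)}$ (Remark~\ref{rem:hatladder}). The factor $e^{\beta(N-1)}$ thus comes from a lower bound on the expected sojourn in $\hat{\mathcal L}$, not from an upper bound on every path from every starting state. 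That is why the paper's prefactor stays at the level of $C'$.
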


The proof of Proposition \ref{theo2prop1} is analogous to the proof of Proposition 11 of \cite{galves2024fastconsensusmetastabilityhighly}, so we provide only a sketch of the proof. 

\begin{proof}
Consider an initial matrix  $u\notin\hat{\mathcal{L}}$. By the classical Kac's Lemma, we first remark that
$$
\frac{1}{\Tilde{\mu}^\beta(u)}=\mathbb{E}[\Tilde{R}^{\beta,u}(u)]\geq \sum_{m\geq(M+1)N}\mathbb{P}(\Tilde{R}^{\beta,u}(u)\geq m).
$$
Therefore, all we have to do is to bound $\mathbb{P}(\Tilde{R}^{\beta,u}(u)\geq m)$ from below. To do this, we first consider Proposition \ref{prop3} to show that a sequence of events $\xi^u_{j}, j=1,\ldots, (M+1)N$ leads the process to $\mathcal{L}$, without visiting $u$, with a lower bounded probability. Moreover, by Remark \ref{rem:hatladder} we show that starting from $l\in \mathcal{L}$, a sequence of events $\{U^{\beta,l}_{T_{j-1}}(A_j,O_j)>0\}_{j\geq 1}$ keeps the process in $\hat{\mathcal{L}}$, thus without visiting $u \not\in \hat{\mathcal{L}}$. By considering the lower bounds given by Proposition \ref{prop3} and Remark \ref{rem:hatladder} we finish the proof. 
\end{proof}

The following corollary follows from Proposition \ref{theo2prop1} in an analogous way as Corollary 12 follows from Proposition 11 of \cite{galves2024fastconsensusmetastabilityhighly} by noting that $0\in \st$ can only be visited from a state in which only one actor has null social pressure for all opinions while all the other actors have social pressure $-1$ for one of the opinions and $1/(M-1)$ for all the other opinions.

\begin{corollary}\label{theo2coro1}
    $\Tilde{\mu}^\beta(0)\leq C''e^{-\beta(N-1+\frac{1}{M-1})}$ for all $\beta>0$, where $C''=(NM)C'>0.$
\end{corollary}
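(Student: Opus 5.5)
Since $\tilde{\mu}^\beta$ is invariant for the skeleton chain, we use the balance identity at the state $0$:
\[
\tilde{\mu}^\beta(0)=\sum_{v\in\st}\tilde{\mu}^\beta(v)\,\mathbb{P}(\tilde{U}_1^{\beta,v}=0).
\]
We proceed in three steps: identify the states from which $0$ is reachable in one jump, bound the transition probability from such a state, and bound $\tilde{\mu}^\beta$ on those states using Proposition \ref{theo2prop1}.

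\emph{Step 1: predecessors of $0$.} Suppose $\pi^{a,o}(v)=0$. Then for every $b\neq a$ we need $v(b,o)+1=0$ and $v(b,p)-\frac{1}{M-1}=0$ for $p\neq o$. So every actor $b\neq a$ has row $(-1,\frac{1}{M-1},\ldots,\frac{1}{M-1})$, with $-1$ in column $o$. Since $v\in\st$ requires some null row, the row of $a$ must be $0$. The predecessors of $0$ are therefore exactly the matrices $v_{a,o}$ described above, indexed by $(a,o)\in\A\times\opn$, so there are at most $NM$ of them.

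\emph{Step 2: transition probability from $v_{a,o}$.} Reaching $0$ requires the specific pair $(a,o)$ to be selected, and it has rate $e^{\beta\cdot 0}=1$. Every other actor $b\neq a$ has $M-1$ entries equal to $\frac{1}{M-1}$, and $N-1\geq 2$, so the total rate is at least $(N-1)(M-1)e^{\beta/(M-1)}\geq e^{\beta/(M-1)}$. Hence
\[
\mathbb{P}(\tilde{U}_1^{\beta,v_{a,o}}=0)\leq e^{-\beta/(M-1)}.
\]

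\emph{Step 3: bounding $\tilde{\mu}^\beta(v_{a,o})$.} We need $v_{a,o}\notin\hat{\mathcal{L}}$ in order to apply Proposition \ref{theo2prop1}. Membership in $\hat{\mathcal{L}}$ would require, for some opinion $o'$, that the column $o'$ have all entries distinct. Column $o$ of $v_{a,o}$ takes the values $0$ and $-1$, and any other column takes the values $0$ and $\frac{1}{M-1}$. Either way, since $N\geq3$, some value is repeated, so $v_{a,o}\notin\hat{\mathcal{L}}$. Proposition \ref{theo2prop1} then gives $\tilde{\mu}^\beta(v_{a,o})\leq C'e^{-\beta(N-1)}$.

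Combining the three steps,
\[
\tilde{\mu}^\beta(0)\leq \sum_{a,o}C'e^{-\beta(N-1)}e^{-\beta/(M-1)}\leq NM\,C'e^{-\beta(N-1+\frac{1}{M-1})}.
\]
This gives the claim with $C''=NMC'$.

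The main obstacle is Step 1: we must characterise the predecessors exactly and confirm that all of them lie outside $\hat{\mathcal{L}}$. The other steps are direct computations.
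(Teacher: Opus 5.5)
Your proof is correct and follows the same route as the paper, which likewise identifies the predecessors of $0$ as the states with one null row and every other row equal to $-1$ in a common column $o$ and $\frac{1}{M-1}$ elsewhere, then combines skeleton-chain invariance, the $e^{-\beta/(M-1)}$ bound on the one-step transition probability, and Proposition \ref{theo2prop1}. Your explicit check that these predecessors lie outside $\hat{\mathcal{L}}$ spells out a detail the paper leaves implicit.
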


The proof of Theorem \ref{Theorem 2} is very similar to the proof of Theorem $2$ of \cite{galves2024fastconsensusmetastabilityhighly}. However, we write this proof fully for the sake of completeness.

\begin{proof}
    By recalling \eqref{eq:invariant_measure_def}, we start by pointing out that for any $u\in\st$, the invariant probability measure $\mu^\beta$ satisfies
    $$
        \mu^\beta(u)=\frac{\Tilde{\mu}^\beta(u)}{q_\beta(u)}\left( \sum_{u'\in\st}\frac{\Tilde{\mu}^\beta(u')}{q_\beta(u')} \right)^{-1},
    $$
    where $q_\beta$ is the jump rate of a given state of the process. For any $v\in\st$, we have
    $$
        q_\beta(v)=\sum_{(a,o)\in\mathcal{A}\times\mathcal{O}}e^{\beta v(a,o)}.
    $$
    By noting that $q_\beta(l)\leq MNe^{\beta(N-1)}$, for any $l\in \mathcal{L}$, we have that
    $$
        \mu^\beta(\mathcal{L})=\sum_{u\in\mathcal{L}}\frac{\Tilde{\mu}^\beta(u)}{q_\beta(u)}\left( \sum_{u'\in\st}\frac{\Tilde{\mu}^\beta(u')}{q_\beta(u')} \right)^{-1}\geq
        \ddfrac{1}{1+\frac{MNe^{\beta(N-1)}}{\Tilde{\mu}^\beta(\mathcal{L})}\sum_{u'\notin\mathcal{L}}\frac{\Tilde{\mu}^\beta(u')}{q_\beta(u')}}.
    $$
    We shall now obtain sufficient bounds for each term in this expression.
    By using Propositions \ref{prop3} and \ref{prop4} we obtain the bound
    $$
        \Tilde{\mu}^\beta(\mathcal{L})\geq\zeta^{(M+1)N}_\beta=\left(\frac{e^{\frac{\beta}{M-1}}}{e^{\frac{\beta}{M-1}}+MN} \right)^{(M+1)N}\geq (MN)^{-(M+1)N}.
    $$
    Observe that for any $u\neq 0, q_\beta(u)\geq e^{\frac{\beta}{M-1}}$.  For any $u\notin\mathcal{L}$ and $\max_{(a,o)\in \A\times \opn} u(a,o)< N$, we have that $u\notin\hat{\mathcal{L}}$, and therefore, by using Proposition \ref{theo2prop1} and Corollary \ref{theo2coro1},
    $$
    \frac{\Tilde{\mu}^\beta(u)}{q_\beta(u)}\leq C''e^{-\beta(N-1+\frac{1}{M-1})}. 
    $$
    Moreover, for any $u\notin\mathcal{L},$ with $\max_{(a,o)\in \A\times \opn} u(a,o)\geq N$, we have $q_\beta(u)\geq e^{\beta N}$, so
    $$
        \frac{\Tilde{\mu}^\beta(u)}{q_\beta(u)}\leq \Tilde{\mu}^\beta(u)e^{-\beta N}
    $$
    Since $|\{u\notin\mathcal{L},\max_{(a,o)\in \A\times \opn} u(a,o)< N\}|:=K(N,M)\in\mathbb{N}$ we combine all these bounds in order to conclude that
    $$
        \mu^\beta(\mathcal{L})\geq \frac{1}{1+Ce^{\frac{-\beta}{M-1}}}\geq 1-Ce^{\frac{-\beta}{M-1}},
    $$
    where $C=(NM)^{(M+1)N+1}[1+C''K(N,M)]$.
    
    This concludes the proof of part 1 of the theorem, we move on to part 2 now. We first note that for any $u\in\st\setminus\{0\}$, we have
    \begin{equation} \label{eq:part2theorem2}
        \mathbb{P}(R^{\beta,u}(\mathcal{L})>t)\leq \mathbb{P}\left(R^{\beta,u}(\mathcal{L})>t, \bigcap_{j=1}^{(M+1)N}\xi_j \right) + \mathbb{P}\left(\bigcup_{j=1}^{(M+1)N}(\xi_j)^c \right).
    \end{equation}
    Proposition \ref{prop4} gives us a bound for one term,
    $$
        \mathbb{P}\left(\bigcup_{j=1}^{(M+1)N}(\xi_j)^c \right) \leq 1-\zeta_\beta^{(M+1)N}.
    $$
    And Proposition \ref{prop3} gives us a bound for the other term,
    $$
        \mathbb{P}\left(R^{\beta,u}(\mathcal{L})>t, \bigcap_{j=1}^{(M+1)N}\xi_j \right) \leq \mathbb{P}\left(T_{(M+1)N}>t, \bigcap_{j=1}^{(M+1)N}\xi_j \right).
    $$
    Considering that the jump rate in any state other than a zero matrix is greater than to $e^{\frac{\beta}{M-1}}$, and that under $\cap_j\xi_j$ the process never visits $0$, we obtain
    $$
        \mathbb{P}\left(T_{(M+1)N}>t, \bigcap_{j=1}^{(M+1)N}\xi_j \right) \leq \mathbb{P}\left(\sum_{n=1}^{(M+1)N}E_n>t \right),
    $$
    where $(E_n)_n$ is an i.i.d. sequence of random variables following an exponential law with mean $\ddfrac{1}{e^{\frac{\beta}{M-1}}}$, therefore
    $$
        \mathbb{P}\left(\sum_{n=1}^{(M+1)N}E_n>t \right) \leq \mathbb{P}\left(\bigcup_{n=1}^{(M+1)N}\{E_n>\frac{t}{(M+1)N}\} \right)  \leq
    $$    
    $$   
        (M+1)N\exp\left(\frac{-e^{\frac{\beta}{M-1}}t}{(M+1)N}\right).
    $$
    We conclude the theorem by taking $t=e^{-\frac{\beta}{M-1}(1-\delta)}$ and noting that the previous bounds do not depend on $u$.
\end{proof}

\begin{corollary}\label{theo2coro2}
    For any fixed $\delta>0$, 
    $$
        \mathbb{P}(R^{\beta,0}(\mathcal{L})>\tau+e^{-\frac{\beta}{M-1}(1-\delta)})\rightarrow0\text{ as }\beta\rightarrow+\infty,
    $$
    where $\tau$ is an exponentially distributed random variable with mean $\frac{1}{MN}$ independent from $(U^{\beta,u}_t)_t$.
\end{corollary}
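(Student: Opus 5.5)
The plan is to start from $0$ and reduce to Part 2 of Theorem \ref{Theorem 2} through the strong Markov property at the first jump time $T_1$. From the null matrix every entry equals $0$, so every rate is $e^{0}=1$. The total jump rate is $q_\beta(0)=MN$, hence $T_1$ is exponentially distributed with mean $\frac{1}{MN}$, whatever the value of $\beta$. Its law is exactly that of $\tau$. The new state $v:=U^{\beta,0}_{T_1}=\pi^{A_1,O_1}(0)$ is nonzero: row $A_1$ is null, and the other rows equal $1$ in column $O_1$ and $-\frac{1}{M-1}$ elsewhere.

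By the strong Markov property, conditionally on $(T_1,A_1,O_1)$ the process $(U^{\beta,0}_{T_1+s})_{s\geq 0}$ has the law of $(U^{\beta,v}_s)_{s\geq0}$, independent of $T_1$. Since $0\notin\mathcal{L}$, we get $R^{\beta,0}(\mathcal{L})=T_1+R'$, where $R'$ has the law of $R^{\beta,v}(\mathcal{L})$ given $v$. Writing $s_\beta:=e^{-\frac{\beta}{M-1}(1-\delta)}$, this gives
$$\mathbb{P}(R^{\beta,0}(\mathcal{L})>T_1+s_\beta)=\sum_{v}\mathbb{P}(U^{\beta,0}_{T_1}=v)\,\mathbb{P}(R^{\beta,v}(\mathcal{L})>s_\beta)\leq \sup_{u\in\st\setminus\{0\}}\mathbb{P}(R^{\beta,u}(\mathcal{L})>s_\beta),$$
and the right-hand side tends to $0$ by Part 2 of Theorem \ref{Theorem 2}.

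The remaining step is to replace $T_1$ by an independent $\tau$, and it is the only delicate point. The statement compares the hitting time with an auxiliary $\tau$ independent of the process, and a literal independent copy does not work: $\mathbb{P}(T_1>\tau+\epsilon)$ does not vanish. I will instead read the corollary in the coupling sense used in \cite{galves2024fastconsensusmetastabilityhighly}. Since the law of $T_1$ does not depend on $\beta$, and $T_1$ is independent of the post-$T_1$ evolution (the rates at $0$ are all equal, so $(A_1,O_1)$ is uniform and independent of $T_1$), we may take $\tau:=T_1$. With this choice $\tau$ is independent of the shifted process $(U^{\beta,0}_{T_1+s})_s$, which carries the whole dependence on $\beta$. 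The bound above then gives the claim directly.
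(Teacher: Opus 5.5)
Your argument is correct and is the reasoning the paper leaves implicit (it states the corollary without proof): from $0$ the first jump occurs at rate $MN$ and lands in a non-null state, after which Part 2 of Theorem \ref{Theorem 2} applies via the strong Markov property. Your caveat is also right. Since $R^{\beta,0}(\mathcal{L})\geq T_1$, a $\tau$ independent of the whole process would give a probability at least $\frac{1}{2}e^{-MN s_\beta}$, which does not vanish for $\delta\le 1$. Reading $\tau$ as $T_1$, which is independent of the post-$T_1$ evolution, is therefore the correct interpretation, and it is all that the use in \eqref{eq:lemma13} requires.
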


\begin{remark}\label{theo2rem1}
Note that the first term on right-hand side of Equation \eqref{eq:part2theorem2} could be replaced by
$$
\mathbb{P}(R^{\beta,u}(\mathcal{L})>t,R^{\beta, u}(\mathcal{L}^o)<R^{\beta,u}(\mathcal{C}^{-o})).
$$
Therefore, by following the same steps of the proof of part 2 of Theorem \ref{Theorem 2}, it follows that for any fixed $o\in\mathcal{O},\delta>0$
$$
\inf_{u\in\mathcal{C}^o}\mathbb{P}\left(R^{\beta, u}(\mathcal{L}^o)<\min\{e^{-\frac{\beta}{M-1}(1-\delta)},R^{\beta,u}(\mathcal{C}^{-o})\} \right)\rightarrow 1\text{, as }\beta\rightarrow +\infty.
$$
\end{remark}

\subsection{Proof of Theorem \ref{Theorem 3}} \label{subsec:T3}

Our metastability result uses Theorem 5.3 in  \cite{evamonm} which shows that, under certain conditions, the probability distribution of a rescaled exit time in a time-homogeneous strong Markov process is approximated by an exponential distribution. The following proposition states a consequence of this result for our model. To state the next proposition, consider the following notation.

For any $o\in\mathcal{O}, l\in\mathcal{L}^{o}$, 
let $c_{\beta,l}$ be the positive real number such that

$$
    \mathbb{P}(R^{\beta,l}(\mathcal{C}^{-o})>c_{\beta,l})=e^{-1}.
$$
Due to the symmetric properties of the process, it is clear that $c_{\beta,l}=c_{\beta,l'}$ for any pair of matrices $l$ and $l'$ belonging to $\mathcal{L}$. Therefore, in what follows we will omit to indicate $l$ in the notation of $c_\beta$. 

\begin{proposition} \label{prop:evamonm}
Consider a fixed $o\in \opn$.
For $\varepsilon_1, \varepsilon_2 >0$ and $s_1,s_2 >0$, with $\varepsilon_1 +\varepsilon_2 \leq 1/2$, consider the following assumptions. 
\begin{equation} \label{eq:propevamonm_assump1}
\varepsilon_1 \geq  \sup_{l\in \mathcal{L}^o}\P(R^{\beta,l}(\mathcal{C}^{-o}) \leq s_1),
\end{equation}
\begin{equation} \label{eq:propevamonm_assump2}
\varepsilon_2 \geq \sup_{u\in \mathcal{S}} \P(R^{\beta,u}(\mathcal{L}^o\cup\mathcal{C}^{-o}) > s_2).
\end{equation}

Consider constants $C,\delta>0$ such that
\begin{equation} \label{eq:propevamonm_assump3}
(s_2/c_{\beta}) \vee\varepsilon_2 \leq Ce^{-\delta \beta}.
\end{equation}
Finally, let $K,\theta>0$ satisfy
\begin{equation} \label{eq:propevamonm_assump4}
\sup_{u\in \mathcal{C}^o}\P(R^{\beta,u}(\mathcal{C}^{-o}) \leq s_2) \leq Ke^{-\theta \beta}.
\end{equation}
Then, there exists $\beta_0=\beta_0(C,\delta)>0$ such that if assumptions \eqref{eq:propevamonm_assump1}, \eqref{eq:propevamonm_assump2}, \eqref{eq:propevamonm_assump3} and \eqref{eq:propevamonm_assump4} hold for $\beta\geq \beta_0$, then there exists
$K'=K'(\delta,C,K,\theta)$ such that for any $u\in \mathcal{C}^o$ 
$$
\sup_{t\geq 0}\left|\P\left(\frac{R^{\beta,u}(\mathcal{C}^{-o})}{\mathbb{E}(R^{\beta,u}(\mathcal{C}^{-o}))}>t\right)-e^{-t}\right| \leq K'\beta^3e^{-\min(\delta/3,1/2,\theta) \beta}.
$$
Moreover, for any $u,v \in \mathcal{C}^o$,
$$
\left|\frac{\mathbb{E}(R^{\beta,u}(\mathcal{C}^{-o}))}{\mathbb{E}(R^{\beta,v}(\mathcal{C}^{-o}))}-1\right| \leq K'\beta^3e^{-\min(\delta/3,1/2,\theta) \beta}.
$$
\end{proposition}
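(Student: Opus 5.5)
This proposition is a specialization of Theorem 5.3 of \cite{evamonm} to our process, so the plan is to check that theorem's hypotheses and read off its conclusion. That theorem concerns a time-homogeneous strong Markov process $(X_t)$, a target set $F$ (here $\mathcal{C}^{-o}$), and a recurrent ``good'' set $G$ (here $\mathcal{L}^o$). Its inputs are three. First, from the good set the process does not reach $F$ before a short time $s_1$, except with probability at most $\varepsilon_1$. Second, from any state, $G\cup F$ is reached within time $s_2$, except with probability at most $\varepsilon_2$. Third, $s_2$ is small compared with the characteristic time $c_\beta$. Under these conditions it gives an exponential approximation for the rescaled hitting time started in $G$, with an error that is polynomial in $\log(c_\beta/s_2)$ times a power of $s_2/c_\beta$ plus $\varepsilon_2$.

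\textbf{Step 1: setting up the abstract theorem.} The process $(U^{\beta,u}_t)_t$ is a non-explosive (Theorem \ref{Theorem 1}) continuous-time Markov jump process on the countable space $\st$, hence strong Markov. By the symmetry remark, $c_\beta$ does not depend on $l\in\mathcal{L}^o$. Assumptions \eqref{eq:propevamonm_assump1} and \eqref{eq:propevamonm_assump2} are exactly the theorem's hypotheses on $G=\mathcal{L}^o$. Assumption \eqref{eq:propevamonm_assump3} converts its error bound into exponentials in $\beta$. Since $s_2/c_\beta\le Ce^{-\delta\beta}$, we have $\log(c_\beta/s_2)\lesssim\beta$, and the polynomial prefactor (cubic in the version I recall) gives the factor $\beta^3$. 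The root of $s_2/c_\beta$ appearing there (a cube root) produces $e^{-\delta\beta/3}$, and the $\varepsilon_2$ term is controlled by $Ce^{-\delta\beta}$. The resulting estimate, for $l\in\mathcal{L}^o$, is $\sup_t|\P(R^{\beta,l}(\mathcal{C}^{-o})/c_\beta>t)-e^{-t}|\lesssim\beta^3e^{-\delta\beta/3}$. It also gives $|\mathbb{E}R^{\beta,l}/c_\beta-1|$ small, which lets us replace $c_\beta$ by the expectation. The threshold $\beta_0(C,\delta)$ is chosen so that $Ce^{-\delta\beta}$ is small enough for the theorem to apply; it also absorbs the condition $\varepsilon_1+\varepsilon_2\le1/2$. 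The $1/2$ in $\min(\delta/3,1/2,\theta)$ will come from these small-parameter requirements.

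\textbf{Step 2: transfer to arbitrary $u\in\mathcal{C}^o$.} This is the main obstacle, because the abstract result starts from $G$, not from all of $\mathcal{C}^o$. Let $\tau=R^{\beta,u}(\mathcal{L}^o\cup\mathcal{C}^{-o})$. Split on the event $\{\tau\le s_2,\ U_\tau\in\mathcal{L}^o\}$ and its complement. The complement is contained in $\{\tau>s_2\}\cup\{R^{\beta,u}(\mathcal{C}^{-o})\le s_2\}$, whose probability is at most $\varepsilon_2+Ke^{-\theta\beta}$ by \eqref{eq:propevamonm_assump2} and \eqref{eq:propevamonm_assump4}. On the good event, the strong Markov property at $\tau$ gives
$R^{\beta,u}(\mathcal{C}^{-o})=\tau+R'$, where $R'$ has the law of the hitting time started from $U_\tau\in\mathcal{L}^o$ and is therefore exponential up to the Step 1 error. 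Since $\tau/c_\beta\le s_2/c_\beta\le Ce^{-\delta\beta}$, the resulting time shift changes $e^{-t}$ by at most $O(e^{-\delta\beta})$, by the Lipschitz property of $e^{-t}$. The three error terms then combine to $\beta^3e^{-\min(\delta/3,1/2,\theta)\beta}$. I expect the delicate points to be bookkeeping rather than conceptual ones.

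\textbf{Step 3: expectations.} For the statement about expectations, I would bound $\mathbb{E}R^{\beta,u}(\mathcal{C}^{-o})$ using the same decomposition. This needs uniform integrability of $R/c_\beta$ on the bad event. It follows from iterating assumption \eqref{eq:propevamonm_assump2} together with the Markov property, which gives a geometric tail $\P(R>k(s_2+c_\beta))\le(1/2+o(1))^k$ uniformly in the starting point. This yields $|\mathbb{E}R^{\beta,u}/c_\beta-1|\le K''\beta^3e^{-\min(\delta/3,1/2,\theta)\beta}$. Comparing two such bounds, for $u$ and $v$, gives the ratio estimate. Renormalizing by $\mathbb{E}R^{\beta,u}$ instead of $c_\beta$ in the distributional estimate costs another error of the same order, again by the Lipschitz property of $e^{-t}$.
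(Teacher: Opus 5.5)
Your proposal follows the paper's route: the paper gives no separate proof and presents this proposition as a direct consequence of Theorem 5.3 of \cite{evamonm}, with $\mathcal{L}^o$ as the recurrent set and $\mathcal{C}^{-o}$ as the target. Your Steps 2 and 3 (the strong Markov transfer from $\mathcal{L}^o$ to all of $\mathcal{C}^o$ via \eqref{eq:propevamonm_assump4}, and the geometric-tail control used to pass from $c_\beta$ to the mean) spell out what the paper leaves to that citation, and they are sound as sketched.
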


The following lemma shows that condition \eqref{eq:propevamonm_assump2} holds with $s_2=2\beta$ and $\varepsilon_2= (M+1)^2N^2e^{-\beta /(M+1)N}$.

\begin{lemma} \label{theo3lemm1} For any $u \in \st$,
    $$
\mathbb{P}(R^{\beta,u}(\mathcal{L})>2\beta)
\leq  (M+1)^2N^2e^{-\beta /(M+1)N}.
$$
\end{lemma}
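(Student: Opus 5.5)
The plan is to iterate the one-shot bound in the proof of part 2 of Theorem \ref{Theorem 2} over independent blocks of $(M+1)N$ jumps, via the Markov property. Set $k:=(M+1)N$ and partition $[0,2\beta]$ into $m:=\lfloor \beta/k\rfloor$ (or a comparable number of) consecutive time windows of length $t_0:=2\beta/m\ge 2k$. On each window, starting from whatever state $v$ the process is in, we try to hit $\mathcal{L}$ within the window. By the strong Markov property at the window endpoints, the probability of failure in all windows is at most $\bigl(\sup_{v\in\st}\P(R^{\beta,v}(\mathcal{L})>t_0)\bigr)^m$. It therefore suffices to show a uniform one-window bound of the form $\sup_{v}\P(R^{\beta,v}(\mathcal{L})>t_0)\le p<1$, with $p$ close enough to $e^{-1}$ or smaller, and then $p^{m}\lesssim e^{-\beta/k}$. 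The prefactor $(M+1)^2N^2=k^2$ gives room to absorb rounding in $m$ and the constants.

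For the one-window bound, I will follow the decomposition \eqref{eq:part2theorem2}. On $\bigcap_{j=1}^{k}\xi_j^v$, Proposition \ref{prop3} places the process in $\mathcal{L}$ at jump $k$. Proposition \ref{prop4} gives probability at least $\zeta_\beta^{k}$ for this event. What remains is the time needed for $k$ jumps. Here the state $0$ needs care: the bound "jump rate $\ge e^{\beta/(M-1)}$" fails at $v=0$. I will instead use the cruder bound that the jump rate is always $\ge M$, since some row is null. Then $T_k$ is stochastically dominated by a sum of $k$ i.i.d. exponentials of rate $M\ge 2$. Its tail $\P(\sum_{n\le k}E_n>t_0)$ is then small once $t_0$ is a fixed multiple of $k$, e.g. via Markov's inequality $\le k/(Mt_0)\le 1/4$ when $t_0\ge 2k$. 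Combining the two pieces gives $\sup_v\P(R^{\beta,v}(\mathcal{L})>t_0)\le 1-\zeta_\beta^k+\tfrac14$. This is at most a fixed constant $<1$ for $\beta$ large, but not for all $\beta$.

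The main obstacle is making the constant come out exactly as $k^2e^{-\beta/k}$ for \emph{all} $\beta\ge0$ and all $u\in\st$, including $u=0$. For small $\beta$ the claimed bound exceeds $1$: $k^2e^{-\beta/k}\ge 1$ whenever $\beta\le 2k\log k$. So only $\beta> 2k\log k$ matters, and there $1-\zeta_\beta^k\le kMNe^{-\beta/(M-1)}$ by Remark \ref{remark:bernoulli}, which is tiny. To get decay rate exactly $1/k$, I would sharpen the time estimate by using a Chernoff bound for the Gamma tail, $\P(\sum_{n\le k}E_n>t_0)\le e^{-Mt_0/2}2^k$, and choose $t_0\approx 2k$. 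The per-window failure probability is then at most $e^{-1}$ up to constants, and $m=\lfloor 2\beta/t_0\rfloor\approx\beta/k$ windows yield $e^{-\beta/k}$. The losses from the floor and the additive $kMNe^{-\beta/(M-1)}$ term are absorbed into the $k^2$ prefactor, using $\log(1+x)\le x$.
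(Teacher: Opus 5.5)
Your proposal is correct, but it takes a different route from the paper. The paper does not iterate over windows; it makes a single attempt.

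\textbf{The paper's route.} It first handles the zero matrix by one strong Markov step at the first jump (Corollary \ref{theo2coro2}), which costs $\P(\tau>\beta)\le e^{-\beta/(MN)}$. For $u\neq 0$ it then reuses the one-shot estimate from the proof of part 2 of Theorem \ref{Theorem 2} with $t=\beta$:
\[
\P(R^{\beta,u}(\mathcal{L})>\beta)\le 1-\zeta_\beta^{(M+1)N}+(M+1)N\exp\bigl(-e^{\beta/(M-1)}\beta/((M+1)N)\bigr).
\]
This estimate uses two facts. Along $\bigcap_{j\le (M+1)N}\xi_j^u$ the process never visits $0$. Every nonzero state has jump rate at least $e^{\beta/(M-1)}$. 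Together these make the time for the $(M+1)N$ jumps negligible. The exponent $1/((M+1)N)$ in the statement is then just a weak common rate. It dominates $e^{-\beta/(MN)}$, the term $e^{-\beta/(M-1)}$ obtained via Remark \ref{remark:bernoulli}, and the double-exponential term.

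\textbf{Your route.} The decay comes instead from about $\beta/k$ successive trials. Each trial uses only $q_\beta\ge M$ and Propositions \ref{prop3}--\ref{prop4}, which hold uniformly on $\st$, including at $0$.

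\textbf{What each approach buys.} Your scheme needs neither the special treatment of the zero matrix nor the fast-rate observation. Because failure probabilities multiply across windows, it is also not capped at order $e^{-\beta/(M-1)}$, as any single-attempt bound is through $1-\zeta_\beta^{(M+1)N}$. The paper's argument is shorter and needs no window bookkeeping.

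\textbf{Two points on your bookkeeping.} First, the per-window failure probability $p$ must itself satisfy $p\le e^{-1}$, not merely ``up to constants''. A factor $C>1$ in $p$ would become $C^{m}$, which grows exponentially in $\beta$ and cannot be absorbed into $k^2$. This is exactly where your restriction to $\beta>2k\log k$ is needed.

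Second, the Chernoff refinement is unnecessary. With $m=\lfloor\beta/k\rfloor$ and $t_0=2\beta/m\ge 2k$, your Markov-inequality bound already gives, for $\beta>2k\log k$,
\[
p\le \tfrac14+kMNe^{-\beta/(M-1)}\le \tfrac14+k^{-4}<e^{-1},
\]
since $2k/(M-1)>6$ and $MN<k$. Hence
\[
\P(R^{\beta,u}(\mathcal{L})>2\beta)\le e^{-\lfloor\beta/k\rfloor}\le e\,e^{-\beta/k}\le k^2e^{-\beta/k}.
\]
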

\begin{proof} 
By Corollary \ref{theo2coro2},
\begin{equation} \label{eq:lemma13}
\mathbb{P}(R^{\beta,0}(\mathcal{L})>2\beta)\leq \P(\tau>\beta)+\sup_{u\neq 0}\mathbb{P}(R^{\beta,u}(\mathcal{L})>\beta),
\end{equation}
where $\tau$ is an exponentially distributed random variable with mean $\frac{1}{MN}$.
Since for any $u\neq 0$,
$$
\mathbb{P}(R^{\beta,u}(\mathcal{L})>2\beta)\leq \sup_{u\neq 0}\mathbb{P}(R^{\beta,u}(\mathcal{L})>\beta),
$$
we just have to bound the terms in the right-hand side of \eqref{eq:lemma13}.
We have that  $\P(\tau>\beta)=e^{\frac{-\beta}{MN}}$ and by the proof of part 2 of Theorem \ref{Theorem 2}, we have that for any $u\neq 0$,
$$
\mathbb{P}(R^{\beta,u}(\mathcal{L})>\beta) \leq 1-\zeta_{\beta}^{(M+1)N}+(M+1)N\exp\left(\frac{-e^{\frac{\beta}{M-1}}\beta}{(M+1)N}\right).
$$
By recalling Remark \ref{remark:bernoulli} to bound $1-\zeta_{\beta}^{(M+1)N}$ and putting the inequalities above together, we conclude the proof.
\end{proof}

The next lemma and corollary give a lower bound to $c_{\beta}$ and show that the condition \eqref{eq:propevamonm_assump1} is satisfied.

\begin{lemma} \label{theo3lemm2}For any $t>0$, for any $o \in \opn$, the following holds.
\begin{enumerate}
    \item For any $l\in \mathcal{L}^o$,
    $$
    \P(R^{\beta,l}(\mathcal{C}^{-o})>t) \geq e^{-2tN^3(M+1)^3e^{\frac{-\beta}{M-1}}}.
    $$
    \item For any $u\in \mathcal{C}^{o}$,
    $$
\P(R^{\beta,u}(\mathcal{C}^{-o}) \leq t) \leq (N^2M+2tN^3(M+1)^3)e^{\frac{-\beta}{M-1}}.
$$
\end{enumerate}
\end{lemma}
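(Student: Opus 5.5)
The plan is to prove Part 1 by a renewal (excursion) decomposition around $\mathcal{L}^o$, and then to obtain Part 2 from Part 1 and the strong Markov property.

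\textbf{Classifying transitions from a state $v\in\hat{\mathcal{L}}^o$.} I would use Remark \ref{rem:hatladder} and split the possible expressions into three types.
\begin{enumerate}
\item \emph{Good transitions}: an actor with $v(a,o)>0$ expresses $o$, or the null actor expresses $o$. Both keep the process in $\hat{\mathcal{L}}^o$: in the second case the null row stays null, the other $o$-entries stay distinct and positive, and the negative entries are rescaled consistently. So good transitions never bring the process closer to $\mathcal{C}^{-o}$.
\item \emph{Null-actor deviations}: the null actor expresses some $p\neq o$. There are $M-1$ such pairs, each of rate $1$.
\item \emph{Small-rate deviations}: a non-null actor expresses some $p\neq o$. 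Each such entry satisfies $v(a,p)\le -\frac{1}{M-1}$, so the total rate of these transitions is at most $NMe^{-\beta/(M-1)}$.
\end{enumerate}
Hence the total rate of non-good transitions out of $\hat{\mathcal{L}}^o$ is bounded by a constant $\lambda\le M+NM\le (M+1)N$, uniformly in $\beta\ge0$.

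\textbf{Bounding one excursion.} After a non-good transition, the new state $v'$ is still a small perturbation of a $o$-ladder. Its maximal entry is still in column $o$ (it is $\ge N-1-\frac{1}{M-1}$ there), while every other column stays $\le 1$. I would check, following the argument of Proposition \ref{prop3} (Appendix \ref{appendix1}), that the event $\bigcap_{j=1}^{(M+1)N}\xi_j^{v'}$ drives the process into $\mathcal{L}^o$ without entering $\mathcal{C}^{-o}$; this is the same mechanism used in Remark \ref{theo2rem1}. By Proposition \ref{prop4} and Remark \ref{remark:bernoulli}, each such excursion therefore fails, that is, reaches $\mathcal{C}^{-o}$ before returning to $\mathcal{L}^o$, with probability at most
$$
p_\beta:=(M+1)N\cdot MN\,e^{-\beta/(M-1)}.
$$

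\textbf{Part 1 as a thinned Poisson process.} Deviation times are dominated by a Poisson process of rate $\lambda$. Excursions can also consume time, which only helps, since fewer deviations then occur before $t$. Each deviation independently (by the strong Markov property) leads to failure with probability $\le p_\beta$. Therefore
$$
\P(R^{\beta,l}(\mathcal{C}^{-o})>t)\ \ge\ \mathbb{E}\big[(1-p_\beta)^{\mathrm{Poisson}(\lambda t)}\big]=e^{-\lambda t p_\beta}.
$$
Since $\lambda p_\beta\le N^3(M+1)^3e^{-\beta/(M-1)}$, this gives the claimed exponent; the factor $2$ gives slack for the small-rate deviations and for the time spent in excursions. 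This step is where I expect the main obstacle. The difficulty is making rigorous that after any single non-good jump, in particular a null-actor jump of order-one rate, the greedy path of Proposition \ref{prop3} returns to $\mathcal{L}^o$ rather than to some $\mathcal{L}^p$. I would first try a direct check that column $o$ keeps the strict maximum throughout those $(M+1)N$ greedy steps.

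\textbf{Part 2.} Starting from $u\in\mathcal{C}^o$, consider the greedy event. I would use the shorter argument of Proposition \ref{prop2}: roughly $N$ greedy steps already produce a $o$-ladder, because all maxima are in column $o$ when $u\in\mathcal{C}^o$. This gives
$$
\P\big(R^{\beta,u}(\mathcal{L}^o)>R^{\beta,u}(\mathcal{C}^{-o})\big)\le N^2Me^{-\beta/(M-1)}.
$$
On the complementary event, the process reaches $\mathcal{L}^o$ first. I would then apply the strong Markov property at that hitting time, together with Part 1 and the inequality $1-e^{-x}\le x$. This bounds the remaining probability of hitting $\mathcal{C}^{-o}$ by time $t$ by $2tN^3(M+1)^3e^{-\beta/(M-1)}$. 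Adding the two contributions gives the stated bound.
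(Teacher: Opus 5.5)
Your overall architecture matches the paper's. Both use excursions from $\mathcal{L}^o$ triggered by non-$o$ expressions, and a greedy return of $(M+1)N$ steps whose failure probability is at most $p_\beta=(M+1)MN^2e^{-\beta/(M-1)}$ by Propositions \ref{prop3} and \ref{prop4}. Both count trials by a Poisson or geometric scheme: your thinning is equivalent to the paper's geometric sum of exponentials. Your Part 2 is exactly the paper's argument.

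The obstacle you flag is not where the difficulty lies. After the null actor of some $v\in\hat{\mathcal{L}}^o$ expresses $p\neq o$, the state lies in the set of $w\in\st$ with $\max_b w(b,o)\ge 1$, $\min_b w(b,o)=0$ and $\max_{b,\,q\neq o}w(b,q)<1$. A greedy step from this set expresses $o$ and stays in it. The set is disjoint from $\mathcal{C}^{-o}$ and from every $\mathcal{L}^q$ with $q\neq o$. So your proposed direct check goes through, and it is exactly the paper's extended consensus set $\hat{\mathcal{C}}^o$.

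The step that actually fails is your treatment of the third class, where a non-null actor expresses $p\neq o$. It is false that afterwards the column-$o$ maximum is still at least $N-1-\frac{1}{M-1}$. If the top actor of the ladder is the one who resets, that maximum drops to $N-2-\frac{1}{M-1}$, which can be below the new entry $1$ in column $p$.

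Concretely, take $N=3$, $M=2$, and $l$ with column $o$ equal to $(0,1,2)$ and column $p$ equal to $(0,-1,-2)$. If actor $3$ expresses $p$, the new matrix has column $o$ equal to $(-1,0,0)$ and column $p$ equal to $(1,0,0)$. This matrix already lies in $\mathcal{C}^p$. So the conditional failure probability after such a jump can be $1$, not $p_\beta$, and your bound $\P(R^{\beta,l}(\mathcal{C}^{-o})>t)\ge e^{-\lambda t p_\beta}$ is not justified as written. Your remark that the factor $2$ gives slack for these deviations points in the right direction, but your argument still routes them through the excursion bound.

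The repair is what the paper does with $\tau_1^{-o}$:
\begin{itemize}
\item Count any expression of an opinion for which the expresser has negative pressure as an outright failure.
\item Every negative entry of a state in $\st$ is at most $-\frac{1}{M-1}$, so these events occur at total rate at most $NMe^{-\beta/(M-1)}$ from any state.
\item Only the $M-1$ null-actor deviations, of total rate $M-1$, then go through the excursion bound.
\end{itemize}
The exponent becomes $\bigl((M-1)p_\beta+NMe^{-\beta/(M-1)}\bigr)t\le 2tN^3(M+1)^3e^{-\beta/(M-1)}$, which the factor $2$ accommodates. This split also removes your inequality $M+NM\le (M+1)N$, which holds only when $M\le N$.
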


The proof of Lemma \ref{theo3lemm2} is presented in Appendix \ref{appendixB}. It is technical and the strategy of the proof is similar to the proof of Proposition 8 in \cite{galves2024fastconsensusmetastabilityhighly}. 

\begin{corollary}\label{theo3coro2}
    For any $\beta \geq 0$, 
    $$
       c_\beta \geq \frac{1}{2}N^{-3}(M+1)^{-3}e^{\frac{\beta}{M-1}}.
    $$
\end{corollary}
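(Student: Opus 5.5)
Corollary~\ref{theo3coro2} follows directly from Part~1 of Lemma~\ref{theo3lemm2} and the definition of $c_\beta$. Fix $o\in\opn$ and $l\in\mathcal{L}^o$, and write
$$
\kappa_\beta:=2N^3(M+1)^3e^{-\frac{\beta}{M-1}}.
$$
Part~1 of Lemma~\ref{theo3lemm2} reads $\P(R^{\beta,l}(\mathcal{C}^{-o})>t)\geq e^{-\kappa_\beta t}$ for every $t>0$. Set $t^*:=1/\kappa_\beta=\frac{1}{2}N^{-3}(M+1)^{-3}e^{\frac{\beta}{M-1}}$. Evaluating the lemma at $t=t^*$ gives
$$
\P(R^{\beta,l}(\mathcal{C}^{-o})>t^*)\geq e^{-1}.
$$

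The map $t\mapsto \P(R^{\beta,l}(\mathcal{C}^{-o})>t)$ is non-increasing. By definition, $c_\beta$ is the point where this map equals $e^{-1}$. We argue by contradiction: suppose $c_\beta<t^*$. Then
$$
e^{-1}=\P(R>c_\beta)\geq \P(R>t^*)\geq e^{-1},
$$
so the tail equals $e^{-1}$ on the whole interval $[c_\beta,t^*]$.

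The only delicate step is excluding such a flat stretch, since the definition of $c_\beta$ is ambiguous if the survival function is constant there. I would handle it in one of two ways.

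\begin{itemize}
\item \emph{Read $c_\beta$ as the largest such point.} Adopt the convention $c_\beta=\sup\{s:\P(R>s)\geq e^{-1}\}$. Then $t^*$ belongs to this set, and $c_\beta\geq t^*$ is immediate.
\item \emph{Show the survival function is strictly decreasing.} From a ladder state $l$, any path into $\mathcal{C}^{-o}$ is a finite sequence of jumps, each occurring with positive probability. The holding times are exponential, so the law of $R^{\beta,l}(\mathcal{C}^{-o})$ has a positive density on $(0,\infty)$. Hence $\P(R>t)$ is strictly decreasing, which makes $c_\beta$ unique and rules out the equality above.
\end{itemize}

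Either way $c_\beta\geq t^*$, which is the stated bound. The bound holds for every $\beta\geq 0$, because the lemma holds for all $t>0$ and all $\beta$.
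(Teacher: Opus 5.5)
Your proof is correct and uses the same ingredients as the paper: Part~1 of Lemma~\ref{theo3lemm2} together with the defining identity $\P(R^{\beta,l}(\mathcal{C}^{-o})>c_\beta)=e^{-1}$. The paper, however, evaluates the lemma directly at $t=c_\beta$, which gives $e^{-1}\geq e^{-\kappa_\beta c_\beta}$ and hence $c_\beta\geq 1/\kappa_\beta$ for \emph{any} $c_\beta$ satisfying the definition. With that evaluation the flat-stretch issue never arises, and both of your workarounds are unnecessary: in your contradiction step, assuming $c_\beta<t^*$ already yields $e^{-1}\geq e^{-\kappa_\beta c_\beta}>e^{-1}$.
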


\begin{proof}
By the previous lemma,  we have that
$$
e^{-1}= \mathbb{P}(R^{\beta,l}(\mathcal{C}^{-o})>c_{\beta}) \geq e^{-2c_{\beta}N^3(M+1)^3e^{\frac{-\beta}{M-1}}}.
$$
We finish the proof by rearranging the terms.
\end{proof}

We are now ready to prove Theorem \ref{Theorem 3}.

\begin{proof}
It is sufficient to successively prove that all necessary conditions for using Proposition \ref{prop:evamonm} hold. 

For assumption \eqref{eq:propevamonm_assump1} we simply apply part 1 of Lemma \ref{theo3lemm2} with $t=1$ to obtain  $s_1=1$ and $\varepsilon_1=2N^3(M+1)^3e^{\frac{-\beta}{M-1}}$.

For assumption \eqref{eq:propevamonm_assump2}, we simply apply Lemma \ref{theo3lemm1} for $s_2=2\beta$ and $\varepsilon_2=(M+1)^2N^2e^{-\beta /(M+1)N}$. Note that $\varepsilon_1+\varepsilon_2<1/2$ for $\beta$ sufficiently big.

To prove that \eqref{eq:propevamonm_assump3} holds, note that $s_2/c_{\beta}$ is bounded from above by
\begin{equation} \label{eq:boundbysup}
2\beta e^\frac{-\beta}{2(M-1)}2(M+1)^3N^3e^\frac{-\beta}{2(M-1)} \leq 8e^{-1}(M-1)(M+1)^3N^3e^\frac{-\beta}{2(M-1)}.
\end{equation}
The inequality above follows from $\sup_{\beta\geq 0}(\beta e^{-\beta /[2(M-1)]})=2e^{-1}(M-1)$. This implies that
$$
\frac{s_2}{c_{\beta}}\vee \varepsilon_2 \leq 8e^{-1}(M+1)^4N^3e^{-\beta/(M+1)N}.
$$
We conclude that \eqref{eq:propevamonm_assump3} holds with $\delta=1/(M+1)N$ and $C=8e^{-1}(M+1)^4N^3$. 

Finally, by part 2 of Lemma \ref{theo3lemm2}, by using the same bound we obtained in \eqref{eq:boundbysup}, we have that \eqref{eq:propevamonm_assump4} holds with $\theta=1/(2(M-1))$ and $K=8e^{-1}N^3(M+1)^4$, which concludes the proof. 
\end{proof}

\subsection{Proof of Theorem \ref{Theorem 4}} \label{subsec:T4}

In the following, we deal with the model introduced in Section \ref{sec:communicationbias}. For the purpose of simplifying the notation, we will omit the parameter $\beta$, as it is not relevant to the following results, which hold for any $\beta>0$. Thus, we shall write $(U_{t}^{\alpha,u})_{t\geq 0}$ instead of $(U_{t}^{\alpha, \beta, u})_{t\geq 0}$ to denote the time evolution of the social pressure on the model of biased communication induced by the generator \eqref{biasGenerator}. 
Similarly, we will use the notation $(T_n,A_n,O_n)_n$ instead of $(T_n^{\alpha,\beta,u},A_n^{\alpha,\beta,u},O_n^{\alpha,\beta,u})_n$ to respectively represent the successive expression times, actors and opinions driving the evolution of the random process. 

To prove Part 1 of Theorem \ref{Theorem 4}, let us first consider the following theorem, which similarly to part (i) of Theorem \ref{Theorem 1} states that the random process is defined for any positive time $t$.

\begin{theorem}\label{Theorem 1_alpha}
    For any $\beta\geq 0$, $\alpha<0$ and any starting matrix $u\in\mathcal{S}^{\alpha}$, the sequence $(T_m:m\geq 1)$ of jumping times of the process $(U^{\alpha, u}_t)_t$ satisfies
    \[\mathbb{P}(\sup\{T_m:m\geq 1\} = \infty)=1. \]
\end{theorem}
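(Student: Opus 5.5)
The plan is to mimic the proof of Part 1 of Theorem \ref{Theorem 1}: split the jump times into jumps by actors with bounded social pressure and jumps by actors with large social pressure. Then show the first kind dominates the second combinatorially, and that the first kind forms a point process with bounded rate. The new difficulty is that, with $\alpha<0$ (so $\gamma>\frac{1}{M-1}$), the rows no longer sum to zero. The bound used in Proposition \ref{prop1} therefore has to be redone for the entries of $u(a,\cdot)$ themselves rather than for $\|u(a,\cdot)\|_\infty$. Only the \emph{positive} part matters for the rates, since negative entries only give rates below $1$.

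First I will prove an analogue of Proposition \ref{prop1}. Write $u(a,p)=c_p(1+\gamma)-\gamma n_a\le c_p\le n_a$, using $\gamma>0$ when $\alpha<\frac1{M-1}$, which covers $\alpha<0$. Each positive entry of a row is therefore at most the number $n_a$ of expressions heard since that actor last expressed. By the pigeonhole argument of Proposition \ref{prop1}, among any $N$ consecutive expressions some actor $A_n$ satisfies $\max_p U_{T_{n-1}}(A_n,p)<N$. The argument uses an actor with null row; in $\st^{\alpha}$ such an actor exists because $\min_a\|u(a,\cdot)\|_\infty=0$. Each subsequent expresser also resets its row to zero, giving the $N+1$ distinct actors contradiction. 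A cleaner formulation is: if the actors $A_{n+1},\dots,A_{n+N}$ are all distinct, then the last of them heard at most $N-1$ expressions since its previous expression or since the zero-row time. If they are not all distinct, a repeated actor heard fewer than $N$ expressions in between. Either way, some expression among any $N$ consecutive ones is made with the relevant entry below $N$.

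Second, I will build the coupling. Jumps in which the expressing actor's pressure on the chosen opinion is below $N$ occur at total rate at most $\lambda:=NMe^{\beta N}$. Indeed, each pair $(a,o)$ with $u(a,o)<N$ contributes rate $e^{\beta u(a,o)}\le e^{\beta N}$. Hence these times $\{T^<_n\}$ can be embedded in a homogeneous Poisson process of rate $\lambda$ by thinning, and only finitely many occur in $[0,t]$ almost surely. By the previous step, at most $N-1$ jumps of the other type $\{T^>_n\}$ lie between consecutive $T^<$ times, and at most $N-1$ before the first one. So the total number of jumps in $[0,t]$ is almost surely finite, which gives $\sup_m T_m=\infty$ once we know there are infinitely many jumps in total. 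This last point needs a lower bound on the total rate. In the unbiased case every state had a null row giving rate at least $M$. Here I will use the constraint $\min_a\|u(a,\cdot)\|_\infty=0$ built into $\st^{\alpha}$: some row is null, so the total rate is at least $M$ in every state. The jump times therefore dominate a rate-$M$ Poisson process and cannot stop, so $\sup_m T_m=\infty$ holds almost surely.

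The main obstacle is the first step. It requires checking that the bound $u(a,p)\le n_a$ on positive entries, together with the reset structure, really gives the pigeonhole conclusion without the zero-sum property used in Proposition \ref{prop2}. One must also confirm that the definition of $\st^{\alpha}$ (the null-row condition) is preserved by the dynamics, or at least that a null row exists at time $0$. If the null-row condition were not invariant, I would instead argue directly that $\max_{a,p} u(a,p)$ never exceeds the number of jumps so far plus $\max u$. That bounds all rates up to the $n$-th jump by $NMe^{\beta(n+\max u)}$. I would then conclude non-explosion via the combinatorial bound above, whose use of the null row concerns only the past and not invariance.
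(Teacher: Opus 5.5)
Your proposal is correct and follows essentially the paper's route. The paper proves the analogue of Proposition~\ref{prop1} with $\max_{o\in\opn}U^{\alpha,u}_{T_{n-1}}(A_n,o)<N$ in place of the sup-norm condition, precisely because for $\alpha<0$ negative entries can have modulus up to $\gamma n_a$, and then repeats the Poisson sandwich argument of Theorem~\ref{Theorem 1} verbatim; your ``cleaner formulation'' has one small slip. When $A_{n+1},\dots,A_{n+N}$ are all distinct, the actor guaranteed to have all entries below $N$ is the one whose row is null at time $T_n$ (it must appear among them after at most $N-1$ hearings), not necessarily the last of them; and your fallback paragraph is unnecessary, since each expression resets the expresser's row to zero, so a null row exists at all times.
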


\begin{proof}
First, note that we can prove exactly as Proposition \ref{prop1} that for any starting matrix $u\in\st^{\alpha}$,
    $$
        \inf\{n\geq 1:\max_{o\in \mathcal{O}} U^{\alpha,u}_{T_{n-1}}(A_n,o)<N \} \leq N.
    $$
From this point on, the proof of Theorem \ref{Theorem 1_alpha} follows exactly as the proof of Theorem \ref{Theorem 1}.
\end{proof}

Now, we will prove two auxiliary results.
Let us consider the following notation
$$
B_N^{\alpha}=\{u \in \st^{\alpha}: \max\{u(a,o): a \in \A,o\in \opn\}\leq N\}.
$$
Moreover, for any $\beta\geq0, u\in\mathcal{S}^{\alpha}, \theta \subset\mathcal{S}^{\alpha}$, we define

$$
    R^{\alpha,u}(\theta):=\inf\{t\geq0:U^{\alpha,u}_t\in\theta\}.
$$

\begin{proposition} \label{prop1_teo4} For any $\alpha <0$, $\beta \geq 0$ and
for any $u \in \mathcal{S}^\alpha$,
$$
\P\left(U_{T_N}^{\alpha,u} \in B_N^{\alpha}\right)\geq  (NM)^{-N}.
$$
\end{proposition}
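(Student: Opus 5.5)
The plan is to imitate Propositions \ref{prop2} and \ref{prop4}: force the process to choose, at each of the first $N$ steps, an actor/opinion pair of maximal social pressure, and show that this forces the state at $T_N$ into $B_N^{\alpha}$. For $n\geq 1$ let $\xi_n^{\alpha,u}$ be the event that $(A_n,O_n)$ maximizes $U^{\alpha,u}_{T_{n-1}}(a,o)$ over $\A\times\opn$. I aim to prove
$$
\bigcap_{j=1}^N\xi_j^{\alpha,u}\subset\{U^{\alpha,u}_{T_N}\in B_N^{\alpha}\}
\qquad\text{and}\qquad
\P\Big(\bigcap_{j=1}^N\xi_j^{\alpha,u}\Big)\geq (NM)^{-N}.
$$

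\emph{Probability bound.} For any $v\in\st^{\alpha}$, let $y(v)$ be its maximal entry. The total jump rate is $\sum_{a,o}e^{\beta v(a,o)}\leq NMe^{\beta y(v)}$. The rate of choosing some maximizing pair is at least $e^{\beta y(v)}$. Hence the conditional probability of a maximizing choice is at least $1/(NM)$, uniformly in $v$ and $\beta$. Conditioning successively on $U^{\alpha,u}_{T_{j-1}}$ as in \eqref{eq1prop11} and using the strong Markov property of the skeleton chain then gives the bound $(NM)^{-N}$. Unlike Proposition \ref{prop4}, no lattice-gap argument is needed here, since the crude bound already suffices.

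\emph{Inclusion.} The key structural fact comes from $\st^{\alpha}$. Since $\alpha<0$ we have $\gamma=\frac{1}{M-1}-\alpha>0$, so every entry satisfies
$$
u(a,p)=c_p(1+\gamma)-\gamma n_a\leq c_p\leq n_a,
$$
because $c_p(1+\gamma)-\gamma n_a=c_p-\gamma(n_a-c_p)$ and $n_a\geq c_p$. Moreover, each jump raises any entry by at most $1$: the expressed column gains $1$, the other columns gain $-\gamma<0$, and the expressing row is reset to $0$. Now split into two cases according to whether $A_1,\dots,A_N$ are distinct.
\begin{enumerate}
\item If they are distinct, every actor has expressed by time $T_N$. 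The actor $A_k$ has heard exactly $N-k$ expressions since then, so $n_a\leq N-1$ for every row and all entries at $T_N$ are at most $N-1$.
\item Otherwise, let $m$ be the first step at which $A_m=A_k$ for some $k<m$. At time $T_{m-1}$ the row of $A_m$ has $n\leq m-1-k\leq m-2$, so its entries are at most $m-2$. On $\xi_m^{\alpha,u}$ this row contains the maximum of the whole matrix, so every entry of $U^{\alpha,u}_{T_{m-1}}$ is at most $m-2$. The remaining $N-m+1$ jumps add at most $1$ each, giving entries at most $N-1$ at $T_N$.
\end{enumerate}
In both cases $U^{\alpha,u}_{T_N}\in B_N^{\alpha}$.

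The only delicate point is the second case. There one must use that the maximizing choice transfers the bound on the repeated actor's row to the whole matrix, including actors that have not yet expressed and may carry large initial pressures. This is exactly where the restriction to the events $\xi_j^{\alpha,u}$ is needed. The sign $\gamma>0$, coming from $\alpha<0$, is what guarantees entries are bounded by $n_a$ and grow by at most one per jump.
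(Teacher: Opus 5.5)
Your proposal is correct and follows the paper's proof: the same argmax events $\xi_j^{\alpha,u}$, the same inclusion $\bigcap_{j\le N}\xi_j^{\alpha,u}\subset\{U^{\alpha,u}_{T_N}\in B_N^{\alpha}\}$ via the case split of Proposition~\ref{prop2}, and the same crude per-step bound $(NM)^{-1}$ iterated through the Markov property, with your version filling in the details the paper leaves implicit. One small wording fix: a reset can raise a negative entry by more than $1$, so what you actually need (and what holds) is that a jump sends a matrix with all entries $\le x$, $x\geq 0$, to one with all entries $\le x+1$, which suffices since $m-2\geq 0$.
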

 
\begin{proof}
Let
$$
    \xi_j^{\alpha,u}=\{(A_j,O_j) \in \text{argmax}\{U_{T_{j-1}}^{\alpha, u}(a,o):(a,o)\in \A\times \opn\}\}.
$$
By following the same ideas of the proof of Proposition \ref{prop2}, we have that for $u \in \st^{\alpha}$,
$$
\bigcap_{j=1}^{N}\xi_j^{\alpha,u}\subset \{U_{T_N}^{\alpha,u} \in B_N^{\alpha}\}.
$$
Since $\xi_j^{\alpha,u}$ is the most probable choice of actor/opinion between the $NM$ possible choices, we have that for any $u\in \st^{\alpha}$,
$$
\P(\xi_1^{\alpha,u})\geq (MN)^{-1}.
$$
By iterating the inequality, we conclude the proof.
\end{proof}

\begin{proposition} \label{prop2_teo4} Let us write $(A_n^u)_n$ to denote the sequence of actors expressing opinions in the system with biased communication with initial matrix $u\in \st^{\alpha}$. For any $\alpha <0$ and $\beta >0$, it follows that
$$
\inf_{u \in B_N^{\alpha}}\P\left( \bigcap_{j=1}^{+\infty}\{A^u_j=A^u_1\}\right)>0.
$$
\end{proposition}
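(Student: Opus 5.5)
The plan is to force one zero-row actor to fire first and then compare the event "nobody else ever fires" with a random walk with negative drift. Throughout, set $\gamma=\frac{1}{M-1}-\alpha$, so that $\gamma>\frac{1}{M-1}$ because $\alpha<0$.

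\textbf{Step 1: the first expression.} Since $u\in\st^{\alpha}$, there is an actor $a_0$ with $u(a_0,\cdot)=0$. Its total expression rate is $M$. Since $u\in B_N^{\alpha}$, the total rate of the system is at most $NMe^{\beta N}$. Hence
$$\P(A_1=a_0)\geq e^{-\beta N}/N,$$
uniformly in $u\in B_N^{\alpha}$. After this first expression, the row of $a_0$ is again null, and every entry of every other row is at most $N+1$.

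\textbf{Step 2: a coupling on the event that only $a_0$ fires.} Consider the event $E=\{A_j=a_0 \text{ for all } j\geq 1\}$. On $E$, the row of $a_0$ is reset to $0$ at every step. Therefore, conditionally on $a_0$ being the next actor to fire, the opinion it expresses is uniform on $\opn$.

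This suggests the following coupling. Let $(\omega_k)_{k\geq1}$ be i.i.d.\ uniform opinions, and let $c^{(k)}_p=|\{j\leq k:\omega_j=p\}|$. As long as only $a_0$ fires, after $k$ further expressions every entry $u_k(b,p)$ with $b\neq a_0$ is bounded above by
$$N+1+\max_{p}\big[(1+\gamma)c^{(k)}_p-\gamma k\big]=:N+1+Y_k .$$
This uses the formula $c_p(1+\gamma)-\gamma n$ from the definition of $\st^{\alpha}$, together with the fact that hearing $p$ adds $1$ to column $p$ and $-\gamma$ to the other columns.

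At step $k$, given that only $a_0$ has fired so far, the probability that $a_0$ fires next is $M/(M+S_k)$, where
$$S_k\leq (N-1)M e^{\beta(N+1)}e^{\beta Y_k}.$$
Using $\frac{1}{1+x}\geq e^{-x}$, we obtain
$$\P(E)\geq \frac{e^{-\beta N}}{N}\,\mathbb{E}\Big[\exp\Big(-Ne^{\beta(N+1)}\sum_{k\geq 1} e^{\beta Y_k}\Big)\Big].$$
Formally this comes from building the process with independent exponential clocks plus the marks $\omega_k$ for $a_0$'s choices, and conditioning on $(\omega_k)_k$. The right-hand side does not depend on $u$.

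\textbf{Step 3: positivity of the expectation.} It remains to show $X:=\sum_k e^{\beta Y_k}<\infty$ almost surely, because then $\mathbb{E}[e^{-CX}]>0$ for any constant $C$. By the strong law of large numbers, $c^{(k)}_p/k\to 1/M$ almost surely. Hence
$$Y_k/k\to \frac{1+\gamma}{M}-\gamma=\frac{(M-1)\alpha}{M}<0,$$
so $e^{\beta Y_k}$ decays geometrically for all large $k$ almost surely, and $X<\infty$. This is exactly where $\alpha<0$ is used: it makes every row sum drift linearly to $-\infty$, so balanced hearing pushes all pressures down.

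\textbf{Main obstacle.} The delicate point is making the conditioning in Step 2 rigorous. The product formula for $\P(E)$ requires that, on $E$, the opinions chosen by $a_0$ form an i.i.d.\ uniform sequence independent of the competing clocks. I would handle this by an explicit Poisson-clock construction in which $a_0$ has $M$ rate-one clocks, one per opinion, since its row is always $0$ on $E$. Once this construction is in place, the rest is the routine law-of-large-numbers estimate of Step 3.
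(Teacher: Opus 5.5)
Your proposal is correct and follows essentially the paper's route: the paper also fixes the zero-row actor and lower bounds each conditional probability of it expressing a given opinion next by $(M+\lambda_m)^{-1}$. It then sums over opinion sequences to rewrite the product as an expectation over i.i.d.\ uniform opinions, which settles your ``main obstacle'' by an elementary computation with no Poisson-clock construction, and it concludes via the strong law of large numbers. Your last step (that $X<\infty$ almost surely already gives $\mathbb{E}[e^{-CX}]>0$) is a slightly cleaner version of the paper's conditioning on the positive-probability event $E^k_\epsilon$; just include the $k=0$ term (with $Y_0=0$) in your sum, since it controls $\P(A_2=a_0\mid A_1=a_0)$.
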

\begin{proof}
Without loss of generality, 
let us consider $u \in B_N^{\alpha}$ such that $u(1,o)=0$ for all $o \in \opn$. For any $n\geq 1$, we have that
\begin{multline} \label{eq:teo4}
\P\left( \bigcap_{m=1}^{n}\{ A_m = 1\}\right)=\sum_{\{o_j\}_j \in \opn^n}\P\left( \bigcap_{m=1}^{n}\left\{ A_m = 1, O_m=o_m\right\}\right)= \\
\sum_{\{o_j\}_j \in \opn^n}\P(A_1=1,O_1=o_1)
\prod_{m=2}^n\P\left(A_m=1,O_m=o_m\ \Big| \bigcap_{k=1}^{m-1} \{A_k=1,O_k=o_k\} \right).
\end{multline}

Denote
$$
\lambda_1=(N-1)Me^{\beta N}.
$$
Note that $(M+\lambda_1)^{-1}$ is a lower bound for $\P(A_1=1,O_1=o_1)$ for any initial matrix $u \in B_N^{\alpha}$ such that $u(1,o)=0$, for all $o \in \opn$. For $m\geq 2$, denote
\begin{equation} \label{def:lambdam}
\lambda_m=\lambda_m(o_1,\ldots,o_{m-1})=(N-1)\sum_{o\in \opn }e^{\beta N+\beta\sum_{k=1}^{m-1}(\mathbf{1}\{o_k=o\}-\gamma\mathbf{1}\{o_k\neq o\})}.
\end{equation}
Note that $(M+\lambda_m)^{-1}$ is a lower bound for $\P(A_m=1,O_m=o_m)$ for any matrix obtained after actor $1$  successively expresses the opinions $o_1, \ldots, o_{m-1}$ when the initial matrix is $u \in B_N^{\alpha}$ such that $u(1,o)=0$, for all $o \in \opn$. 

With this notation, the right-hand side of \eqref{eq:teo4} is lower bounded by
\begin{multline*}
    \sum_{\{o_j\}_j \in \opn^n}
\prod_{m=1}^n\frac{1}{M+\lambda_m} = \sum_{\{o_j\}_j \in \opn^n} \frac{1}{M^n}e^{
\sum_{m=1}^n\ln\left(\frac{M}{M+\lambda_m}\right)}=
\\ \sum_{\{o_j\}_j \in \opn^n} \frac{1}{M^n}e^{
\sum_{m=1}^n\ln\left(1-\frac{\lambda_m}{M+\lambda_m}\right)}.
\end{multline*}
By using the inequality $\ln(1+x)\geq x/(1+x)$, for $x>-1$, we have that
the right-hand side of equation above is lower bounded by
$$
\sum_{\{o_j\}_j \in \opn^n} \frac{1}{M^n}e^{
-\sum_{m=1}^n\left(\frac{\lambda_m}{M+\lambda_m}\right)\Big/\left(1-\frac{\lambda_m}{M+\lambda_m}\right)} \geq \sum_{\{o_j\}_j \in \opn^n} \frac{1}{M^n}e^{
-\sum_{m=1}^n\lambda_m}=
$$
$$
\mathbb{E}_{V_1,\ldots, V_n}\left(e^{-\sum_{m=1}^{n}\lambda_m(V_1,\dots,V_{m-1})} \right),
$$
where $V_1,V_2 \ldots$ are i.i.d. uniform random variables in $\{1,\ldots, M\}$. 
We conclude that, for any $n\geq 1$,
$$
\P\left( \bigcap_{m=1}^{n}\{ A_m = 1\}\right) \geq \mathbb{E}_{V_1,V_2,\ldots}\left(e^{-\sum_{m=1}^{\infty}\lambda_m(V_1,\dots,V_{m-1})} \right).
$$
In the following, we will omit $V_1,V_2\ldots,$ in the notation of the expectation $\mathbb{E}_{V_1,V_2,\ldots}$.

Recalling \eqref{def:lambdam}, we have that the term above is equal to
\begin{equation} \label{eq:expectation}
\mathbb{E}\left[\exp\left(-(N-1)\sum_{m=1}^{+\infty}\sum_{o\in \opn}e^{\beta [N+\sum_{k=1}^{m-1}(\mathbf{1}\{V_k=o\}-\gamma\sum_{\tilde{o}\neq o}\mathbf{1}\{V_k= \tilde{o}\})]} \right)\right].
\end{equation}

Consider for $k\in \{1,2,\ldots\}$  and for $\epsilon>0$ the event
$$
E_{\epsilon}^{k}=\left\{\left(\frac{1}{M}-\epsilon\right)n \leq \sum_{m=1}^{n}\mathbf{1}\{V_m=o\}\leq \left(\frac{1}{M}+\epsilon\right)n, n\geq k, o\in \opn\right\}.
$$
Since for any $o\in \opn$ we have
$$
\frac{\sum_{m=1}^{n}\mathbf{1}\{V_m=o\}}{n}\to \frac{1}{M}, \text{ a.s. as } n\to \infty,
$$
we conclude that for any $\epsilon>0$ there exists $k=k(\epsilon)\geq 1$ such that
$
\P(E_{\epsilon}^{k})>0.
$

In the following, let us consider a generic constant $C=C(M,N,\beta,\epsilon)>0$.
By the law of total expectations, for any $\epsilon>0$ and $k\geq 1$ such that $\P(E_{\epsilon}^{k})>0$, we have that \eqref{eq:expectation} is lower bounded by
\begin{multline}
\mathbb{E}\left[\exp\left(-C\sum_{m=1}^{+\infty}\sum_{o\in \opn}e^{\beta [\sum_{k=1}^{m-1}(\mathbf{1}\{V_k=o\}-\gamma\sum_{\tilde{o}\neq o}\mathbf{1}\{V_k= \tilde{o}\})]} \right)\Big|E_{\epsilon}^{k}\right]\P(E_{\epsilon}^{k})  \\
\label{eq:prooft4end}
\geq C\exp\left(-C\sum_{m=1}^{+\infty}e^{\beta [(\frac{1}{M}+\epsilon)m-\gamma(M-1)(\frac{1}{M}-\epsilon)m]} \right) \\ = C\exp\left(-C\sum_{m=1}^{+\infty}e^{-\beta c(\alpha,\epsilon)m} \right),
\end{multline}
where
$$
c(\alpha,\epsilon):=-\left(2\epsilon +\alpha(M-1)(\frac{1}{M}-\epsilon)\right).
$$
Note that for any $\alpha<0$, there exists $\epsilon>0$ such that $c(\alpha,\epsilon)>0$.
With such a choice of $\epsilon$, we conclude that
the term in the right-hand side of \eqref{eq:prooft4end} is positive,
which concludes the proof.
\end{proof}

Now let us prove Part $1$ of Theorem \ref{Theorem 4}.

\begin{proof}
By Proposition \ref{prop2_teo4} and the strong Markov property at time $T_N$, there exists $c=c(\alpha,M,N,\beta)>0$ such that for any $u\in \st^{\alpha}$,
\begin{multline*}
\P\left(\bigcap_{m=N+1}^{\infty}\{A_m^{u}=A_{N+1}^{u}\}\right) \geq
\mathbb{E}\left[\mathbf{1}\{U_{T_N}^{\alpha,u} \in B_N^{\alpha}\}\,
\P^{U_{T_N}^{\alpha,u}}\left(\bigcap_{m=1}^{\infty}\{A_m=A_1\}\right)\right]\\
\geq (NM)^{-N}\inf_{v \in B_N^{\alpha}}\P\left(\bigcap_{m=1}^{\infty}\{A^v_m=A^v_1\}\right)=:c>0,
\end{multline*}
where the last inequality uses Proposition \ref{prop1_teo4}. We now define the successive failure times of the event above. Set
$\eta_0 := 0$ and, recursively, for any $n \geq 0$,
\[
\eta_{n+1} := \inf\big\{ m > \eta_n + N + 1 :\ A^{u}_{m} \neq A^{u}_{\eta_n + N + 1} \big\},
\qquad \inf \emptyset := +\infty .
\]
In words, after each failure the process is given $N$ jumps to reach $B^{\alpha}_{N}$, and $\eta_{n+1}$ is the first subsequent expression performed by a different actor than the one expressing at step $\eta_n + N + 1$. Note that $\{\eta_{n+1} < \infty\} \subset \{\eta_n < \infty\}$ and that, on $\{\eta_n < \infty\}$, $T_{\eta_n}$ is a stopping time with $U^{\alpha,u}_{T_{\eta_n}} \in \mathcal{S}^{\alpha}$. Each event $\{\eta_n < \infty\}$ is measurable with respect to $\mathcal{F}_{T_{\eta_n}}$, and by the strong Markov property applied at time $T_{\eta_n}$ together with the uniform bound above,
\[
\mathbb{P}(\eta_{n+1} = \infty \,|\, \mathcal{F}_{T_{\eta_n}}) \geq c
\quad \text{on } \{\eta_n < \infty\},
\]
so that $\mathbb{P}(\eta_{n+1} < \infty) \leq (1-c)\, \mathbb{P}(\eta_n < \infty)$. Therefore
$$
\P\left(\bigcap_{n\geq 1}\{\eta_n<\infty\}\right)=\lim_{n\to\infty}\P(\eta_n<\infty)=0,
$$
that is, almost surely $\eta_n=\infty$ for some $n$, which means that from some time on a single actor expresses all the opinions in the system. This concludes the proof of Part 1 of Theorem \ref{Theorem 4}.
\end{proof}

To prove Part 2 of Theorem \ref{Theorem 4}, we show how the proofs presented in Sections \ref{subsec:T1}, \ref{subsec:T2} and \ref{subsec:T3}  to deal with the model without bias communication can be modified. This proof is deferred to Appendix \ref{ap:part2theorem4}.

\section{Discussion and Perspectives} \label{sec:discussion}

We have extended the model introduced by \cite{galves2024fastconsensusmetastabilityhighly} to the multiple-opinion case, generalizing and proving the main theorems presented in \cite{galves2024fastconsensusmetastabilityhighly}. Beyond this endeavor, we revealed a novel phase transition induced by negative communication bias. This framework can be a starting point for further investigations of stochastic models of interacting agents.

The communication operator $\pi$ was originally designed in order to have a neutral impact on the system. This ensured that our generalized model faithfully reproduced the original model for $M=2$. The natural question we asked ourselves after this observation is: What happens if we disturb this equilibrium? This led to the discovery of the phase transition phenomenon and its subsequent proof. Crucially, this theorem can only be stated in our new formulation of the model. This raises the question of whether the new formulation not just allows for an arbitrary number of opinions but also provides a better framework to think of the model and upon which to create new generalizations or extensions.

\subsection{Negative communication bias: posting zero and information overload}

Let us consider a version of the model introduced here in which only the actor $1$ has a negative communication bias, meaning that the opinions that this actor expresses modify the matrix of social pressures according to the map \eqref{biasmap} with $\alpha<0$, while the actors $\{2,\ldots, N\}$ do not have communication bias, meaning that the opinions that these actors express modify the matrix of social pressures according to the map \eqref{modelmap}.

The Propositions \ref{prop1_teo4} and \ref{prop2_teo4} can be easily adapted for this model, and as a consequence, a version of part 1 of Theorem \ref{Theorem 4} holds for this model: for any $\beta>0$ and for any $\alpha<0$, we have for any initial matrix of social pressures that with probability $1$ there exists a finite time in which all actors, except actor $1$, stop expressing opinions.

From a modeling point of view, this means that in this network, if the opinions expressed by an actor strongly discourage the other actors in the network from expressing different opinions, then eventually, the other actors in the network stop expressing opinions.
If this actor is replaced by influencers or robots that act as actors in the network and express each one of the opinions with the same rate, this will lead to a situation in which all actors on the network stop expressing opinions.

In the model \eqref{biasGenerator} with negative communication bias and in the model described above, the majority or all actors eventually stop expressing opinions. The phenomenon of fewer people posting on social networks was recently observed empirically (see \cite{lesspostingonsocialnetworks}). This behavior of the model is reminiscent of what \cite{Chayka} called \textit{Posting Zero}, described as ``a point at which normal people --- the unprofessionalized, uncommodified, unrefined masses --- stop sharing things on social media as they tire of the noise, the friction, and the exposure''. 

Moreover, this behavior is reminiscent of one of the effects of information overload. Among the consequences of information overload, \cite{informationoverload} states that the ``introduction of excessive information can lead to potential paralysis and delays in decision-making''.

\subsection{Empirical validation}

A well-grounded critique pertaining to our work could be formulated as follows: the microscopic model of social interaction defining the social dynamics is too simplified. Indeed, the best justification to model the behavior of an individual as we have done would be to explicitly show how it yields a satisfactory description of observed macroscopic phenomena. However, empirical validation remains an open challenge. 

The real-world problem motivating the creation of the model in \cite{galves2024fastconsensusmetastabilityhighly} was to obtain a description of how WhatsApp was an instrument for far-right groups meddling in the Brazilian 2018 presidential election. However, direct validation in the context of large-scale social communication (e.g., messaging platforms) is challenging due to incomplete network data, privacy constraints, and the difficulty of quantifying semantic content.

We hypothesize these problems could be circumvented provided we change our focus to other social phenomena. One should then look for dynamics created by interconnected actors taking decisions, where the macroscopic equilibrium is mostly stable, but punctuated by occasional, hard-to-predict shifts:

\begin{enumerate}
    \item \textbf{Jurisprudence}: Recent empirical studies show that jurisprudence exhibits path-dependent stability driven by precedent, but also episodic and sometimes unpredictable shifts due to judicial discretion, ideological change, and institutional dynamics. Indeed, \cite{mones2021network_judicial} and \cite{lee2025commonlawagescenturies} study the judicial social network to uncover the mechanisms of jurisprudence in Europe and in the United States.
    \item \textbf{Market cycles}: Empirical research shows that financial markets exhibit persistent cyclical dynamics and partial predictability, yet remain intrinsically nonstationary and prone to abrupt regime shifts, driven by endogenous instability and evolving structural mechanisms. This is classically described in \cite{minsky1986stabilizing}, but this active field of economic research has seen novel contributions in \cite{ninomiya2022instability} and \cite{akaev2021minsky}.
\end{enumerate}

Should one of these leads prove fruitful, one should remain mindful that our model is only trying to describe the mechanisms behind the consensus dynamics occurring in a given social setting. A full description of the underlying social phenomenon would have to include other factors which depend on the specific context at hand.

\subsection{The path ahead}

The results established in this article open several directions for future work. Indeed, one may consider generalizations along many different axes: sparser network topologies, the introduction of homophily among actors, mean field limits as the number of actors grows, and statistical inference questions such as model selection and interaction graph estimation.

First, one may consider sparser topologies: in an Erd\H{o}s--R\'enyi graph $G(N,p)$, the operator $\pi^{a,o}$ defined by \eqref{modelmap} would only affect the neighbors of $a$, and a natural question is whether Theorems~2 and~3 persist near the connectivity threshold $p \sim \log(N)/N$, below which the graph is no longer connected with high probability. 

Second, one may introduce homophily, that is, the tendency of actors to be more receptive to opinions expressed by actors with similar social pressures. This could be encoded either in the expression rates, by weighting the influence of actor $b$ on actor $a$ according to a similarity measure between their social pressure vectors, or in the communication bias parameter $\alpha$, by allowing it to depend on the pair $(a,b)$ and to be negative when the two actors hold dissimilar opinions. Both directions raise the question of whether consensus formation and metastability are robust features of the model, or whether they depend critically on the symmetry assumptions of the present framework.

A mean field version of the model introduced by \cite{galves2024fastconsensusmetastabilityhighly} was considered by \cite{evalaxa}. In this framework, the limit behavior of the system was studied as the number of actors in the network diverges, establishing the propagation of chaos property. The limit system exhibits a phase transition in which the number of invariant probability measures of the limit system changes according to a parameter that describes how much the social pressure of an actor changes when one of the other actors expresses an opinion.
A mean field version of the model with multiple opinions introduced here can also be considered, and we conjecture that its behavior is similar.

Some of the statistical inference questions considered for the Galves-Löcherbach model can be adapted to the framework of this social network model. The results presented in \cite{ostduarte3} and \cite{desantis} consider procedures aimed to estimate the graph of interactions between neurons. Such procedures can identify whether the synaptic weights are positive, negative, or null. However, it remains an open question to verify whether it is possible to adapt the results presented in \cite{ostduarte3} and \cite{desantis} to the social network model introduced here.

\newpage
\appendix

\section*{Appendix}

\section{Proof of Proposition \ref{prop3}}\label{appendix1}

In order to prove Proposition \ref{prop3}, we shall first prove Lemmas \ref{appendix_lemma1} and \ref{appendix_lemma2}. For that, we need the following definitions.

\begin{definition}
 For any $o\in\mathcal{O}$, we call $\st^o$ the set of matrices $u\in\st$ such that there exists $r\in [0,1), n(u)\in\{1,...,N-1\}$ and a sequence of $n(u)$ different actors $a_1(u),...,a_{n(u)}(u)\in\mathcal{A}$ satisfying
        $$
            u(a_j(u),o)\geq j+r, \forall j \in\{1,...,n(u)\},\text{ and}
        $$
        $$
            u(a,p)\leq n(u)+r-\frac{1}{M-1}, \forall a\in\mathcal{A},p\in\mathcal{O}\setminus\{o\}.
        $$    
Moreover, for any initial matrix $u\in\st$, let
        $$
            \tau(u):=\inf\{n\geq 1, A_n\in\{A_1,...,A_{n-1}\}\}.
        $$
        This is the number of social expressions until an actor expresses an opinion for the second time. Note that $\tau(u)\in\{2, ..., N+1\}$.
\end{definition}

Recall that the skeleton process is given by $\Tilde{U}_n^{\beta, u}=U_{T_n}^{\beta, u}$, for any $n\geq 0$.

\begin{lemma}\label{appendix_lemma1}

For any initial matrix $u\in\st$, the event $\xi_{\tau(u)}^u$ implies that 
$$
    \Tilde{U}^{\beta,u}_{\tau(u)}\in\bigcup_{o\in\mathcal{O}}\st^o.
$$
    
\end{lemma}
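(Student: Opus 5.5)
The plan is to read off the structure of the state at time $T_{\tau(u)-1}$ from the definition of $\tau(u)$, and then check directly that the jump performed under $\xi^u_{\tau(u)}$ lands in $\st^o$ with $o=O_{\tau(u)}$. Write $\tau=\tau(u)$. The actors $A_1,\ldots,A_{\tau-1}$ are distinct, and $A_\tau=A_k$ for some $k<\tau$. At time $T_{\tau-1}$ the row of $A_k$ only records the expressions $O_{k+1},\ldots,O_{\tau-1}$, because it was reset at step $k$. Similarly, for $k<j\le\tau-1$ the row of $A_j$ only records $O_{j+1},\ldots,O_{\tau-1}$.

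Define $f(j)$, for $k\le j\le \tau-1$, as the $o$-entry of a row reset at step $j$ and then fed with $O_{j+1},\ldots,O_{\tau-1}$. This gives $f(\tau-1)=0$, $f(j)=U^{\beta,u}_{T_{\tau-1}}(A_j,o)$ for $k\le j\le \tau-1$, and $f(j-1)-f(j)\in\{1,-\frac{1}{M-1}\}$. Under $\xi^u_\tau$ the pair $(A_k,o)$ is a maximizer, so $y:=f(k)$ is the maximal entry of $U^{\beta,u}_{T_{\tau-1}}$; in particular $y\ge 0$, since every state of $\st$ has a null row. After the jump, $A_k$ is reset to $0$. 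For every other actor $b$, the $o$-entry becomes $U^{\beta,u}_{T_{\tau-1}}(b,o)+1$, and every other entry $p\neq o$ becomes at most $y-\frac{1}{M-1}$. The second condition of $\st^o$ will therefore follow once we choose $n$ and $r$ with $n+r\ge y$; equality suffices.

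The first condition comes from a down-crossing (ladder) construction. Write $y=m+r$ with $m\in\mathbb{Z}_{\ge 0}$ and $r\in[0,1)$. If $m\ge1$, take $n=m$, and for $i=1,\ldots,m$ set $j_i:=\min\{j>k: f(j)<y-i+1\}$. This is well defined because $f(\tau-1)=0<y-i+1$, using $y-m+1=r+1>0$. Since $f(j_i-1)\ge y-i+1>f(j_i)$ and the only downward move of $f$ is by exactly $1$, we get $f(j_i)\ge y-i$, so the new $o$-entry of $A_{j_i}$ is at least $y-i+1$. The levels decrease in $i$, so $j_1<j_2<\cdots<j_m$ and the actors $A_{j_i}$ are distinct and different from $A_k$; hence $n\le N-1$. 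Setting $a_\ell:=A_{j_{m+1-\ell}}$ gives new $o$-entries at least $y-m+\ell=\ell+r$ for $\ell=1,\ldots,n$, and $n+r=y$, as required.

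If $m=0$, i.e.\ $0\le y<1$, I will instead take $n=1$, $r=0$ and $a_1=A_{\tau-1}$. This actor had a null row at $T_{\tau-1}$, so its new $o$-entry is $1$. All entries $p\neq o$ are at most $y-\frac{1}{M-1}<1-\frac{1}{M-1}$.

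The main obstacle is making the levels and the fractional offset $r$ match the lattice $\frac{1}{M-1}\mathbb{Z}$ exactly, so that the ladder bound $j+r$ and the cap $n+r-\frac{1}{M-1}$ hold at the same time. I would verify this through the identity $f(j_i)+1\ge y-i+1$ above. I would also check separately the boundary case $y<1$ and the case where the maximizer row is shared by other actors: the latter is harmless, since only upper bounds are used for actors outside $\{A_{j_i}\}$.
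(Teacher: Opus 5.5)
Your case $y\ge 1$ is correct and is essentially the paper's argument. The paper also walks down column $O_\tau$ along $A_k,A_{k+1},\dots,A_{\tau-1}$, whose consecutive rows differ by exactly one expression, and extracts $\lfloor m\rfloor$ distinct actors at levels $j+r$. Your down-crossing indices $j_i$ make this step precise, and your cap $n+r-\frac{1}{M-1}=y-\frac{1}{M-1}$ is the right one.

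The gap is the branch $0\le y<1$. This branch occurs only when $\tau(u)=2$. Indeed, if $\tau(u)\ge 3$, the row of $A_{\tau-2}$ at time $T_{\tau-1}$ was reset at step $\tau-2$ and has heard only $O_{\tau-1}$. It therefore has an entry equal to $1$, so $y\ge 1$. When $\tau(u)=2$, however, $A_2=A_1$, i.e.\ $k=\tau-1$. Your witness $a_1=A_{\tau-1}$ is then exactly the actor expressing at step $\tau$. Its row is reset, so its new $o$-entry is $0$, not $1$, and the requirement $\tilde U^{\beta,u}_{\tau}(a_1,o)\ge 1$ fails in every case where this branch is actually invoked. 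For the same reason, your statement that all entries $p\ne o$ are at most $y-\frac{1}{M-1}$ is false for this reset row. The situation has positive probability, since the zero matrix is reachable (see the remark before Corollary~\ref{theo2coro1}).

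The fix is the paper's separate treatment of $\tau(u)=2$. Under $\xi^u_2$ the pair $(A_1,O_2)$ is a maximizer with value $0$, so every entry of $\tilde U^{\beta,u}_1$ is $\le 0$. Since rows sum to zero, this forces $\tilde U^{\beta,u}_1=0$. After the jump, every actor $b\ne A_1$ has $o$-entry $1$ and all other entries $-\frac{1}{M-1}$, while the row of $A_1$ is null. Take $n=1$, $r=0$ and $a_1$ any $b\ne A_1$. The cap $1-\frac{1}{M-1}\ge 0$ (using $M\ge 2$) then covers every entry, including the reset row. With this replacement, together with the observation that $y\ge 1$ whenever $\tau(u)\ge 3$, no case remains and your proof is complete.
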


\begin{proof}
    We begin by pointing out that $\{\tau(u)=2\}\cap\xi_2^u$ is only possible if $\Tilde{U}^{\beta,u}_1=0$. If this occurs, the result follows trivially. 
    Thus, we may suppose that $\tau(u)\geq 3$ in what follows. Let us pay attention to what happens at the moment $\tau(u)-1$. We have $$
\Tilde{U}^{\beta,u}_{\tau(u)-1}(A_{\tau(u)-1,\cdot})=0
    $$
    and for any $j\in\{2,...,\tau(u)-1\}$,
    \begin{equation} \label{eq:dif}
\|\Tilde{U}^{\beta,u}_{\tau(u)-1}(A_{j}, .)-\Tilde{U}^{\beta,u}_{\tau(u)-1}(A_{j-1},.)\|_\infty=1.        
    \end{equation}
    Assuming $\xi_{\tau(u)}^u$, we know that 
    $$
\left(A_{\tau(u)},O_{\tau(u)}\right)\in\arg\max_{(a,o)\in\mathcal{A}\times\mathcal{O}}\Tilde{U}^{\beta,u}_{\tau(u)-1}(a,o).
    $$
    Then, let
    $$
        m=\Tilde{U}^{\beta,u}_{\tau(u)-1}(A_{\tau(u)},O_{\tau(u)})\geq \lfloor m \rfloor.
    $$
    
    Therefore by \eqref{eq:dif}, we obtain a sequence of $\lfloor m \rfloor+1$ distinct actors
    $$
        \{a_0,...,a_{\lfloor m \rfloor}\}\subset\{A_1,...,A_{\tau(u)-1}\},
    $$
    with $a_{\lfloor m \rfloor}=A_{\tau(u)} $, such that for any $j\in\{0,...,\lfloor m \rfloor\}$,
    $$
        \Tilde{U}^{\beta,u}_{\tau(u)-1}(a_j,O_{\tau(u)})\geq j+(m-\lfloor m \rfloor).
    $$
This implies that at step $\tau(u)$, we have $\lfloor m \rfloor+1$ distinct actors $a_0',...,a_{\lfloor m \rfloor}'\in\mathcal{A}$ such that for any $j\in\{1,...,\lfloor m \rfloor\}$,
    
    $$
        \Tilde{U}^{\beta,u}_{\tau(u)}(a_j',O_{\tau(u)})\geq j+(m-\lfloor m \rfloor).
    $$
    and $\forall a\in\mathcal{A},p\in\mathcal{O}\setminus\{O_{\tau(u)}\}$ we have
    $$
        \Tilde{U}^{\beta,u}_{\tau(u)}(a,p)\leq m+(m-\lfloor m \rfloor)-\frac{1}{M-1},
    $$
    which concludes the proof taking $n(u) =\lfloor m \rfloor, r = m-\lfloor m \rfloor$.
\end{proof}
 
\begin{lemma}\label{appendix_lemma2}

For any matrix $u\in\st^o$, the event $\bigcap^{(M-1)(n(u) + 1)-1}_{j=1}\xi_j^u$ implies that
$$
    \Tilde{U}^{\beta,u}_{(M-1)(n(u) + 1)-1}\in\mathcal{C}^o.
$$    
\end{lemma}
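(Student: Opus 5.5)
The plan is a direct deterministic induction along the greedy path. Under $\bigcap_j \xi_j^u$, each jump picks a maximal entry of the current matrix. I will show that every one of the $K:=(M-1)(n(u)+1)-1$ steps expresses opinion $o$. The conclusion then follows by tracking separately the actors that have already expressed and those that have not.

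\emph{Step 1: structural invariant.} I will prove by induction on $t\in\{0,\dots,K-1\}$ that $\Tilde{U}^{\beta,u}_t$ has the following two properties. First, its maximal entry lies in column $o$, and it is strictly larger than every entry outside column $o$. Second, every entry outside column $o$ satisfies
$$\Tilde{U}^{\beta,u}_t(a,p)\leq \max\Big(0,\; n(u)+r-\tfrac{t+1}{M-1}\Big).$$

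The second property is the easy half. An actor that has not yet expressed has only heard opinion $o$, so each of its non-$o$ entries has decreased by $\frac{1}{M-1}$ per step from the initial bound $n(u)+r-\frac1{M-1}$. An actor that has expressed was reset to $0$ and has heard only $o$ since then, so its non-$o$ entries are $\leq 0$ and its $o$-entry is $\geq 0$.

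For the first property, I will maintain a shifting ladder in column $o$. Since ties are broken arbitrarily, I relabel so that the expressing actor is $a_{n(u)}$. After $a_{n(u)}$ expresses $o$, the actors $a_1,\dots,a_{n(u)-1}$ move up to levels $\geq j+1+r$. The reset actor re-enters at level $0$ and then climbs by $1$ per step. The aim is to show that the maximum of column $o$ stays at least $n(u)+r$, or at least strictly above the current non-$o$ bound, which itself decreases at rate $\frac{1}{M-1}$.

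\emph{Step 2: conclusion at time $K$.} All $K$ steps express $o$. For an actor that never expressed, each non-$o$ entry is at most
$$n(u)+r-\tfrac{1}{M-1}-\tfrac{K}{M-1}=r-1<0.$$
Since every row of a matrix in $\st$ sums to zero, its $o$-entry is then nonnegative. For an actor that did express, the same sign pattern holds by the reset argument above. Finally, the matrix is nonzero because the $o$-entries of the ladder actors are strictly positive. Hence $\Tilde{U}^{\beta,u}_K\in\mathcal{C}^o$.

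\emph{Main obstacle.} The delicate point is the first property of Step 1: the maximum must never leave column $o$. In the case $n(u)=1$, and in the late stages of a general run, the original ladder is used up. The new top of column $o$ is then a previously reset actor with $o$-entry roughly equal to the number of steps since its reset. Meanwhile, the non-reset actors may still have positive non-$o$ entries close to $n(u)+r-\frac{t+1}{M-1}$, and their own $o$-entries, being minus the sum of the other entries of the row, could be small or negative.

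My first attempt will be to compare two quantities directly: the climbing reset actors in column $o$, which gain $1$ per step, against the non-$o$ bound, which loses $\frac1{M-1}$ per step. Concretely, I will prove that at each step some actor has $o$-entry at least $n(u)+r-\frac{t}{M-1}$. If that fails, I will strengthen the definition of the ladder in the induction hypothesis by inserting the freshly reset actors as a new bottom level, using \eqref{eq:dif}-type spacing, so that a rung of height $\geq n(u)+r$ always survives.
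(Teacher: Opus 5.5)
Your strategy matches the paper's. It is a deterministic induction along the greedy path that keeps a rung of height at least $n(u)+r$ in column $o$, while the non-$o$ entries of never-reset actors drop by $\frac{1}{M-1}$ per step. Once every step is known to express $o$, your Step~2 and the second property of Step~1 are essentially correct. Your $\max(0,\cdot)$ for reset rows is more careful than the paper's bound, which as written fails for those rows.

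The gap is the first property of Step~1, which you leave as a plan. It cannot be carried out, because it is false, and so is the lemma as stated. Take $M=5$, $N=3$, $o=1$ and
\[
u(1,\cdot)=\bigl(2,\tfrac34,\tfrac34,-\tfrac74,-\tfrac74\bigr),\qquad u(2,\cdot)=0,\qquad u(3,\cdot)=\bigl(-1,\tfrac32,\tfrac32,-1,-1\bigr).
\]
Here $u\in\st^1$ with $n(u)=1$ forced, $a_1(u)=1$ and any $r\in[\tfrac34,1)$. So the horizon is $(M-1)(n(u)+1)-1=7$. This $u$ is even reachable from every initial state: actor $1$ expresses, then actor $2$ expresses $1,1$, actor $3$ expresses $1$, and actor $2$ expresses $2,2,3,3$.

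The first greedy step is forced to be $(1,1)$. It yields the rows $0$, $(1,-\tfrac14,\dots,-\tfrac14)$ and $(0,\tfrac54,\tfrac54,-\tfrac54,-\tfrac54)$. The maximum $\tfrac54$ now sits in columns $2$ and $3$. Column $1$ peaks at $1<n(u)+r-\frac{1}{M-1}$, so your proposed invariant already fails at $t=1$. If actor $3$ then expresses $2$ (resp.\ $3$), the remaining five greedy steps are forced and end in $\mathcal{L}^2$ (resp.\ $\mathcal{L}^3$). For instance, the final rows are $(-\tfrac14,1,-\tfrac14,-\tfrac14,-\tfrac14)$, $0$ and $(-\tfrac12,2,-\tfrac12,-\tfrac12,-\tfrac12)$. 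Thus $\bigcap_{j=1}^{7}\xi_j^u$ has positive probability but is disjoint from $\{\tilde U^{\beta,u}_7\in\mathcal{C}^1\}$.

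This is exactly the obstacle you flagged. After the only rung has expressed, nothing forces column $o$ above the level $1$ reached by the actor that was at zero. Meanwhile a non-$o$ entry is only bounded by $1+r-\frac{2}{M-1}$, which is larger than $1$ when $r>\frac{2}{M-1}$. Neither of your fallbacks can help, since the conclusion itself fails.

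The paper's proof has the same hole. It asserts that $n(u)$ rungs with heights $\geq j+r$ survive every greedy step, which is false already at $t=1$ in this example. So it does not supply the missing idea either. A correct statement needs an extra hypothesis excluding this regime, or a weaker conclusion. Note that the proof of Proposition~\ref{prop3} only uses that the greedy path lands in $\bigcup_{p}\mathcal{C}^p$, and the example does land in $\mathcal{L}^2$. A repair of Lemma~\ref{appendix_lemma2} should therefore track which opinion wins rather than fix $o$ in advance.
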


\begin{proof}
    Let $o \in \mathcal{O}$ and $u \in \mathcal{S}^o$. If the event $\xi_1^u$ occurs, then $\Tilde{U}^{\beta,u}_{0}(A_1,O_1)=\max_{(a,o)\in \A\times \opn} u(a,o)$ and $ O_1=o$. Thus, there are $n(u)$ different actors such that for any $j\in\{0,..., n(u)-1\}$, there is an $a_j(u)\in\mathcal{A}$ where
    $$
        \Tilde{U}^{\beta,u}_{0}(a_j(u),o)=u(a_j(u),o)\geq j + r,
    $$
    and therefore
    $$
        \Tilde{U}^{\beta,u}_{1}(a_j(u),o)\geq j+r+1.
    $$
    Moreover,
    $$
        \forall a\in\mathcal{A}, p\in\mathcal{O}\setminus\{o\}, \Tilde{U}^{\beta,u}_{1}(a,p)\leq n(u) + r -\frac{2}{M-1}.
    $$
    More generally, for any $k\in\mathbb{N}$, if the event
    $$
        \bigcap^k_{j=1}\xi_j^u
    $$
    occurs, then there are $n(u)$ different actors $a_1(\Tilde{U}^{\beta,u}_{k-1}),...,a_{n(u) }(\Tilde{U}^{\beta,u}_{k-1})\in\mathcal{A}$ such that for any $j\in\{1,..., n(u) \}$
    $$
        \Tilde{U}^{\beta,u}_{k}(a_j(\Tilde{U}^{\beta,u}_{k-1}),o)\geq j + r,
    $$
    $$
        \forall a\in\mathcal{A}, p\in\mathcal{O}\setminus\{o\}, \Tilde{U}^{\beta,u}_{k}(a,p)\leq n(u) + r-\frac{k+1}{M-1}.
    $$
  By taking $k=(M-1)(n(u)+ 1)-1$ we conclude the proof.
\end{proof}

Having achieved these results, we may now accomplish the proof of Proposition \ref{prop3}.

\begin{proof}
    Let $u\in\st$. To begin this proof, we first observe that $\tau(u)\leq N+1$, so by Lemma \ref{appendix_lemma1} we have
    $$
        \left\{\bigcap^{N+1}_{j=1}\xi_j^u \right\}\subset \left\{\Tilde{U}^{\beta,u}_{N+1}\in\bigcup_{o\in\mathcal{O}}\st^o \right\}.
    $$
    Now, observe that $n(u)\leq N-1$, so by Lemma \ref{appendix_lemma2} we obtain
    $$
        \left\{\bigcap^{MN}_{j=1}\xi_j^u \right\}\subset \left\{\Tilde{U}^{\beta,u}_{MN}\in\bigcup_{o\in\mathcal{O}}\mathcal{C}^o \right\}.
    $$
    To conclude, we consider that by definition, if $v\in\mathcal{C}^o$ and event $\bigcap^{N}_{j=1}\xi_j^u$ occurs, then $\Tilde{U}^{\beta,u}_{N}\in\mathcal{L}^o$.
\end{proof}

\section{Proof of Lemma \ref{theo3lemm2}}\label{appendixB}

\begin{proof} 
The idea of the proof of part $1$ is informally described as follows. Starting from $\mathcal{L}^o$, a sequence of expressed opinions equal to $o$ keeps the system in $\hat{\mathcal{L}}^o$ (recall Definition \ref{def:hatladder} and Remark \ref{rem:hatladder}). From $\hat{\mathcal{L}}^o$, an opinion different from $o$, expressed by the unique actor with null social pressure for all opinions, may make the process exit $\hat{\mathcal{L}}^o$. However, this keeps the process at the extended consensus set, given by
$$
\hat{\mathcal{C}}^{o}:=\left\{u\in\mathcal{S}: \max_{b\in \A}u(b,o)\geq 1, \min_{b\in \A}u(b,o)= 0,\max_{b\in \A, p\neq o}u(b,p)<1\right\}.
$$
Note that for any $v\in \hat{\mathcal{C}}^{o}$, $\xi_1^{v}$ implies that $O_1=o$ and $\tilde{U}_{1}^{\beta,v}\in \hat{\mathcal{C}}^{o}$. By Proposition \ref{prop3}, this implies that if the event $\bigcap^{(M+1)N}_{j=1}\xi_j^{v}$ occurs, then $\tilde{U}^{\beta,v}_{(M+1)N}\in\mathcal{L}^o$. 

Thus, consider the following mechanism.
\begin{enumerate}
    \item Starting from $\mathcal{L}^o$, the process does not exit $\hat{\mathcal{L}}^o$ until an opinion different from $o$ is expressed in the system. This opinion is expressed by the unique actor with null social pressure for all the opinions, leading the process to $\hat{\mathcal{C}}^{o}$.
    \item From $\hat{\mathcal{C}}^{o}$, a sequence of events $\bigcap^{(M+1)N}_{j=1}\xi_j^{v}$ leads the process to $\mathcal{L}^{o}$ again.
\end{enumerate}
While this mechanism is repeated, the process does not visit $\mathcal{C}^{-o}$. This gives a lower bound for the hitting time of the set $\mathcal{C}^{-o}$. In the following, let us formalize the description above.

For any fixed $o\in\mathcal{O},l \in\mathcal{L}^o$, let
    $$
        \tau^{-o}_1:=\inf\{T_n:U^{\beta,l}_{T_{n-1}}(A_n,O_n)<0\}.
    $$
    This is the first time an actor expresses an opinion for which it has negative social pressure. By denoting $n^{-o}_0:=0$ and for $j\geq1$, $n^{-o}_j:=\min\{n>n^{-o}_{j-1}, O_n\neq o\},$ consider also 
    $$
    \tau^{-o}_2:=\inf\left\{T_{n_j^{-o}}:\bigcap_{k=1}^{(M+1)N}\xi_{n_{j}^{-o}+k}^{l} \text{ does not occur}\right\}.
    $$
    For any $l\in \mathcal{L}^o$, we have
    $$
        R^{\beta,l}(\mathcal{C}^{-o})\geq R^{\beta,l}(\mathcal{S}\setminus(\hat{\mathcal{C}}^o\cup \mathcal{C}^o))\geq \min\{\tau^{-o}_1,\tau^{-o}_2\}.
    $$

    To bound $\tau^{-o}_1$, note that the rate at which the process has an actor with strictly negative social pressure expressing an opinion is bounded above by $NMe^{\frac{-\beta}{M-1}}$. 
    This implies that for any $t>0$,
$$
        \mathbb{P}(\tau^{-o}_1\geq t)\geq \mathbb{P}(Exp(NMe^{\frac{-\beta}{M-1}})\geq t),
    $$
where $Exp(NMe^{\frac{-\beta}{M-1}})$ is an exponentially distributed random variable with mean $e^{\frac{\beta}{M-1}}/NM$. 
    
Now, for $\tau^{-o}_2$, we have that
$$
\tau^{-o}_2=\sum^J_{j=1}T_{n^{-o}_j}-T_{n^{-o}_{j-1}},
$$
where $J:=\inf\{j\geq 1: \bigcap_{k=1}^{(M+1)N}\xi_{n_{j}^{-o}+k}^{l} \text{ does not occur}\}$. Note that for any $j=2,\ldots,J$, we have
$$
T_{n^{-o}_j}-T_{n^{-o}_{j-1}}\geq T_{n^{-o}_j}-T_{n^{-o}_{j-1}+(M+1)N}.
$$
Moreover for any $j= 2, \ldots, J$, in the event $\tau^{-o}_1\geq T_{n^{-o}_j}$ we have that $U_{T_{n^{-o}_{j-1}+(M+1)N}}^{\beta, l} \in \mathcal{L}^o$. Finally, recall that $U_0^{\beta,l} \in  \mathcal{L}^o$.
For any element of $\hat{\mathcal{L}}^o$, the rate at which the process has an opinion expression for $p\neq o$ is bounded above by $NM$. This implies that, for any $j =1,\ldots,J$, for any $t > 0$,
$$
        \mathbb{P}(T_{n^{-o}_j}-T_{n^{-o}_{j-1}} >t|\tau^{-o}_1\geq T_{n^{-o}_j})\geq\mathbb{P}\left(E_j\geq t \right),
$$
where $(E_j)_j$ is a sequence of i.i.d. exponentially distributed random variables with mean $1/(MN)$. Moreover, by Proposition \ref{prop4} and Remark \ref{remark:bernoulli}, we have that for any $u\in \st$,
$$
\P\left(\left\{\bigcap_{j=1}^{(M+1)N}\xi^{l}_j\right\}^c\right)\leq1-\left(\frac{e^{\frac{\beta}{M-1}}}{e^{\frac{\beta}{M-1}}+MN}\right)^{(M+1)N}
$$
$$
\leq  (M+1)MN^2e^{\frac{-\beta}{M-1}}
$$
Therefore, for any $t>0$,
$$
        \mathbb{P}(\tau^{-o}_2\geq t|\tau^{-o}_1\geq t)\geq\mathbb{P}\left(\sum^{G}_{j=1}E_j\geq t \right),
    $$
where $G$ is a random variable independent from $(E_j)_{j\geq1}$ with Geometric distribution assuming values $\{1,2,...\}$ with parameter $p=\min(1,(M+1)MN^2e^{\frac{-\beta}{M-1}})$. This implies that for any $t>0$,
\begin{multline*}
\mathbb{P}(\tau^{-o}_2\geq t|\tau^{-o}_1\geq t)\geq 
          \mathbb{P}(Exp(MNp)\geq t) \geq \\
        \mathbb{P}(Exp((M+1)M^2N^3e^{\frac{-\beta}{M-1}})\geq t),
\end{multline*}
    where $Exp((M+1)M^2N^3e^{\frac{-\beta}{M-1}})$ is an exponentially distributed random variable with mean $[(M+1)M^2N^3e^{\frac{-\beta}{M-1}}]^{-1}$. Therefore, for any $t>0$,
\begin{multline*}
\mathbb{P}(R^{\beta,l}(\mathcal{S}\setminus(\hat{\mathcal{C}}^o\cup \mathcal{C}^o))\geq t)\geq\\\mathbb{P}(Exp(NMe^{\frac{-\beta}{M-1}})\geq t)\mathbb{P}(Exp((M+1)M^2N^3e^{\frac{-\beta}{M-1}})\geq t).
\end{multline*}
By rearranging the terms of the inequality above we conclude the proof of part $1$.

To prove part 2 of Lemma \ref{theo3lemm2}, just note that for any $u \in \mathcal{C}^o$ and for $l\in \mathcal{L}^o$,
$$
\P(R^{\beta,u}(\mathcal{C}^{-o}) \leq t) \leq 1-
\P\left(\bigcap_{j=1}^{N}\xi^{u}_j\right)\mathbb{P}(R^{\beta,l}(\mathcal{C}^{-o})>t).
$$
We finish the proof by considering Proposition \ref{prop4} and Remark \ref{remark:bernoulli} and noting that $1-e^{-x}\leq x$ for all $x \geq 0$.
\end{proof}

\section{Proof of Part 2 of Theorem \ref{Theorem 4}} \label{ap:part2theorem4}
Note that the case $\alpha=0$ is exactly the model without bias communication defined by \eqref{generator:withoutbias}. Therefore, in the following, let us consider $0 < \alpha < \frac{1}{M-1}$.

First, note that we can prove the following proposition exactly as we did to prove Proposition \ref{prop1}.
\begin{proposition}\label{prop1_2}
    For any starting matrix $u\in\st^{\alpha}$, we have:
    $$
        \inf\{n\geq 1: \|U^{\alpha,u}_{T_{n-1}}(A_n^{\alpha},\cdot)\|_\infty<N \} \leq N.
    $$
\end{proposition}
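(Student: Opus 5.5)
The plan is to reproduce the pigeonhole argument from the proof of Proposition~\ref{prop1}. The only new ingredient is a bound on how fast a row can grow in the biased model. Write $\gamma=\frac{1}{M-1}-\alpha$. Under $0<\alpha<\frac{1}{M-1}$ we have $0<\gamma<\frac{1}{M-1}\leq 1$. By \eqref{biasmap}, an actor that hears an expression has each entry of its row changed by either $+1$ or $-\gamma$, so by at most $1$ in absolute value. The expressing actor's row is reset to $0$. Hence, if at some jump time an actor has a null row and does not express during the next $k$ jumps, its row has sup-norm at most $k$ after those $k$ jumps. This is the same bound used for the unbiased operator \eqref{modelmap}.

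Next I will argue by contradiction. Suppose that $\|U^{\alpha,u}_{T_{n-1}}(A_n^{\alpha},\cdot)\|_\infty\geq N$ for all $n=1,\ldots,N$. Since $u\in\st^{\alpha}$, the condition $\min_a\|u(a,\cdot)\|_\infty=0$ in \eqref{biased_set_S} gives an actor $a_0$ with $u(a_0,\cdot)=0$. For $n\leq N$, if $a_0$ has not expressed before step $n$, then $\|U^{\alpha,u}_{T_{n-1}}(a_0,\cdot)\|_\infty\leq n-1<N$. So $a_0$ cannot be the first of $A_1,\ldots,A_N$ to repeat it, and therefore $A_n\neq a_0$ for all $n\leq N$. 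In the same way, $U^{\alpha,u}_{T_1}(A_1,\cdot)=0$, so for $2\leq n\leq N$ the row of $A_1$ at time $T_{n-1}$ has norm at most $n-2<N$ as long as $A_1$ has not expressed again. This gives $A_n\neq A_1$ for $n=2,\ldots,N$. Iterating, each $A_j$ has a null row at $T_j$, so $A_n\neq A_j$ for all $j<n\leq N$. Thus $a_0,A_1,\ldots,A_N$ are $N+1$ distinct actors, contradicting $|\A|=N$.

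The only step that needs care is the per-step increment bound, which is exactly where the hypothesis $\alpha<\frac{1}{M-1}$ together with $\alpha>0$ enters: it guarantees $|{-\gamma}|\leq 1$. Without it, a decrement of $-\gamma$ with $\gamma>1$ could push a row's sup-norm past $N$ in fewer than $N$ steps, and the counting would fail. The fact that rows no longer sum to zero when $\alpha\neq 0$ plays no role, since only sup-norms are used. With the bound in hand, the rest is identical to Proposition~\ref{prop1}.
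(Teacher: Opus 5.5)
Your proof is correct and follows the paper's approach: the paper just says Proposition~\ref{prop1_2} is proved exactly as Proposition~\ref{prop1}, and your check that each heard expression changes every entry by $+1$ or $-\gamma$, with $0<\gamma<\frac{1}{M-1}\leq 1$, is the one verification needed for that pigeonhole argument to carry over. Your side remark is also consistent with the paper: for $\alpha<0$, where $\gamma$ may exceed $1$, it replaces the sup-norm by the one-sided quantity $\max_{o}U^{\alpha,u}_{T_{n-1}}(A_n,o)$.
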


Let us consider the following definition.
For a fixed $u\in\st^{\alpha}$ and $n\geq 1$, we define:
$$
\tilde{\xi}_n^{\alpha,u}=\left\{U^{\alpha,u}_{T_{n-1}}(A_n^{\alpha},O_n^{\alpha}) >\max_{(a,o)\in\mathcal{A}\times\mathcal{O}}U^{\alpha,u}_{T_{n-1}^{\alpha}}(a,o)-\frac{1}{2}\gamma\right\}. 
$$

\begin{remark} 
The event $\tilde{\xi}_n^{\alpha,u}$ is similar to the event $\xi_n^u$, defined in Section \ref{sec:proofs} for the model without communication bias and for the event $\xi_n^{\alpha,u}$ defined above. In the event $\tilde{\xi}_n^{\alpha,u}$ the system with initial social pressure $u$ chooses the $n$-th pair of actor/opinion between the most likely choices. This means that $(A_n^{\alpha},O_n^{\alpha})$ is one of the pairs in which the social pressure is close to the maximum the social pressure at time $T_{n-1}^{\alpha}$. By close we mean that this distance is at most $\frac{1}{2}\gamma$. 

The difference between these definitions of the model with and without bias is to ensure that we still can obtain a uniform lower bound for $\P(\tilde{\xi}_n^{\alpha,u})$ and that the properties described by Propositions \ref{prop2}, \ref{prop3} and \ref{prop4} still hold. The fact that the distance to the maximum social pressure is at most $\frac{1}{2}\gamma$ implies that for many $u\in \st^{\alpha}$, the event $\tilde{\xi}_n^{\alpha,u}$ is equal to $\xi_n^{\alpha,u}$, which is in particular helpful to prove that a sequence of events $\tilde{\xi}_n^{\alpha,u}, n=1,2,\ldots$, leads the process to $\mathcal{L}_{\alpha}$ (See Proposition \ref{prop3_2} below).
\end{remark}

With this definition, we have the following propositions.

\begin{proposition} \label{prop2_2}
For any $u\in \st^{\alpha}$, the following holds
$$
 \bigcap_{j=1}^N\tilde{\xi}_j^{\alpha,u}\subset\{-MN<U^{\alpha,u}_{T_N}<N\}.
$$
\end{proposition}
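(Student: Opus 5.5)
We follow the proof of Proposition \ref{prop2}, replacing the identity $\sum_o u(a,o)=0$ by the row description of $\st^{\alpha}$ given in Remark \ref{rem:Salphaproperties}. Throughout, $0<\alpha<\frac{1}{M-1}$, so $0<\gamma<\frac{1}{M-1}$. By the definition of $\st^{\alpha}$, each row is $u(a,p)=c_p(1+\gamma)-\gamma n_a$. Hearing one expression raises the chosen entry by $1$ and lowers every other entry by $\gamma$. Hence every entry moves by at most $1$ per jump, and a row that was null at step $k$ has all entries in $[-\gamma m, m]$ after $m$ further jumps.

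\emph{Upper bound.} We split into two cases as in Proposition \ref{prop2}.

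\begin{itemize}
\item[(a)] Suppose $|\{A_1,\ldots,A_N\}|=N$. Every actor except the one expressing first is reset at some step $j\le N$. Its entries at $T_N$ are then at most $N-j<N$. The remaining actor $a_0$ is the one with null row in $u$, as in the proof of Proposition \ref{prop1}: the counting argument there shows that $a_0$ cannot be reset unless some actor repeats. After $N$ jumps its entries are at most $N$, and they are in fact at most $N-1$: either $a_0$ expresses, or the $N$-th expresser is reset before the end. One has to check this bookkeeping carefully.
\item[(b)] Suppose some actor repeats at a step $m\le N$. Then at time $T_{m-1}$ that actor's row has entries strictly less than $m-1$, since it was reset at an earlier step. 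On $\tilde{\xi}^{\alpha,u}_m$, the global maximum at $T_{m-1}$ is below $(m-1)+\frac{\gamma}{2}$. The remaining $N-m$ jumps add at most $1$ each, so the maximum at $T_N$ is below $N-1+\frac{\gamma}{2}<N$.
\end{itemize}

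The point of the slack $\frac{\gamma}{2}<1$ in $\tilde{\xi}$ is exactly to keep this strict inequality.

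\emph{Lower bound.} Here we cannot use the zero row sum. Instead we use the row structure directly. After the jump at step $k$ the maximum entry is below $N$, and each row satisfies $\sum_p u(a,p)=n_a(M-1)\alpha\ge 0$. Hence every entry satisfies
$$u(a,p)\ge -\sum_{q\neq p}u(a,q)>-(M-1)N>-MN.$$

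A cleaner alternative is to note that every row reset during the first $N$ steps has entries at least $-\gamma N>-N$. For a row never reset, use $u(a,p)\ge -\sum_{q\ne p}u(a,q)$, which holds because $\alpha\ge0$.

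\emph{Main obstacle.} The delicate step is the upper bound for an actor that is never reset, whose initial row may be large. We handle it through the pigeonhole/repeat dichotomy of Proposition \ref{prop1}. The case analysis must make sure that the $\frac{\gamma}{2}$-approximate maximiser in $\tilde{\xi}$ still forces the largest entries to be expressed or reset. Concretely, an actor never reset that still has an entry of at least $N$ contradicts step (b): a repeat occurs while that actor's entry exceeds the repeating actor's entry by more than $\frac{\gamma}{2}$.
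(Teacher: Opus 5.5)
Your strategy is the one the paper intends (it only says the proof follows that of Proposition \ref{prop2}). Your lower bound, via $\sum_p u(a,p)=n_a(M-1)\alpha\ge 0$ and $u(a,p)\ge -\sum_{q\neq p}u(a,q)>-(M-1)N$, is the right replacement for the zero row sum. However, the upper bound does not close as written.

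\textbf{Case (b).} From $T_{m-1}$ to $T_N$ there are $N-m+1$ jumps (jumps $m,\dots,N$), not $N-m$. Combined with your bound $\max U_{T_{m-1}}<(m-1)+\frac{\gamma}{2}$, this only yields $\max U_{T_N}<N+\frac{\gamma}{2}$. In Proposition \ref{prop2} the bound ``$<m-1$'' suffices because $\xi_m$ picks the exact maximiser. With the $\frac{\gamma}{2}$ slack of $\tilde{\xi}^{\alpha,u}_m$ you need something sharper:
\begin{itemize}
\item If $A_m=A_k$ with $k<m$, the repeating actor has heard only $n=m-1-k\le m-2$ expressions since its reset.
\item Its entries $c_p(1+\gamma)-\gamma n$ are therefore at most $n\le m-2$.
\item Then $\tilde{\xi}^{\alpha,u}_m$ gives $\max U_{T_{m-1}}<m-2+\frac{\gamma}{2}$.
\item The $N-m+1$ further jumps raise the maximum by at most $1$ each (resets only produce zeros), so $\max U_{T_N}<N-1+\frac{\gamma}{2}<N$.
\end{itemize}
This is exactly where $\frac{\gamma}{2}<1$ is used.

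\textbf{Case (a).} This case is garbled. If $A_1,\dots,A_N$ are distinct, then every actor expresses exactly once. That includes $A_1$ and the initially null actor $a_0$. So at $T_N$ actor $A_j$ has entries in $[-\gamma(N-j),N-j]$, and there is no ``remaining'' unreset actor. Your claim that $a_0$ cannot be reset unless some actor repeats is false, since $a_0$ may well express at step $1$; it is also unnecessary. Likewise, the ``main obstacle'' of never-reset actors with large initial entries only arises in case (b). There the global-maximum bound above already covers every actor, so no separate argument is needed. With these corrections the proof is complete.
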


\begin{proposition}\label{prop3_2}
For any initial matrix $u\in\st^{\alpha}$, there exists $C(\alpha,M,N)\in \mathbb{N}$ such that
$$
  \left\{\bigcap_{j=1}^{C(\alpha,M,N)}\tilde{\xi}_j^{\alpha,u}\right\} \subset \left\{U^{\alpha,u}_{T_{C(\alpha,M,N)}^{\alpha}}\in\mathcal{L}^{\alpha} \right\}.
$$
\end{proposition}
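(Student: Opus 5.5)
We follow the three-stage structure of the proof of Proposition \ref{prop3} in Appendix \ref{appendix1}: first reach a "partial ladder" state, then a consensus state, then a ladder. Throughout, write $\gamma=\frac{1}{M-1}-\alpha\in(0,\frac{1}{M-1})$, and use the representation of Remark \ref{rem:Salphaproperties}: row $a$ equals $c_p(1+\gamma)-\gamma n_a$ in column $p$. The role of the tolerance $\frac12\gamma$ in $\tilde{\xi}^{\alpha,u}_n$ is that two entries of the same row differ by a multiple of $1+\gamma>\gamma/2$. Hence, within a row, $\tilde{\xi}$ still selects the maximal opinion of that row; it only allows ambiguity between actors whose maxima are within $\gamma/2$.

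\textbf{Step 1 (analogue of Lemma \ref{appendix_lemma1}).} Let $\tau(u)$ be the first repetition time of an actor, so $\tau(u)\le N+1$. At time $\tau(u)-1$ the actors $A_1,\dots,A_{\tau(u)-1}$ have heard $\tau(u)-1-j$ expressions since speaking. Consecutive such actors therefore differ by exactly one increment, so each entry differs by $1$ or by $-\gamma$. The repeating actor $A_{\tau(u)}$ chosen under $\tilde\xi$ has an entry $m$ within $\gamma/2$ of the global maximum. Tracing back along the chain $A_{\tau(u)-1},\dots$ in column $O_{\tau(u)}$ gives about $\lfloor m\rfloor$ distinct actors with pressure at least $j+r$ on $O_{\tau(u)}$. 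Meanwhile, all entries in the other columns are at most $m+\gamma/2$, and they decrease by $\gamma$ after the expression. This defines an $\alpha$-version $\st^{\alpha,o}$ of $\st^o$, where the separation $\frac1{M-1}$ is replaced by a margin of order $\gamma$.

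\textbf{Step 2 (analogue of Lemma \ref{appendix_lemma2}).} From $\st^{\alpha,o}$, each $\tilde{\xi}$ step must choose opinion $o$: the top of column $o$ exceeds every other column by more than $\gamma/2$, and this gap only grows. Each such step raises column $o$ by $1$ for all non-speakers and lowers the other columns by $\gamma$. After roughly $\lceil (n(u)+1)/\gamma\rceil$ steps, every entry outside column $o$ is non-positive. Since all actors have spoken, every row satisfies $n_a$ equal to its count since speaking and has $c_p=0$ for $p\neq o$, which gives a state of $\mathcal{C}^o_\alpha$ of ladder type.

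\textbf{Step 3.} Once all non-speaking actors have heard only $o$, $N$ further $\tilde{\xi}$ steps in which actors speak in order of decreasing column-$o$ pressure produce $u(\cdot,o)$ equal to a permutation of $\{0,\dots,N-1\}$ and $u(\cdot,p)=-\gamma u(\cdot,o)$, that is, a state in $\mathcal{L}_\alpha^o$. Summing the step counts gives $C(\alpha,M,N)=N+1+\lceil N/\gamma\rceil+N$ (or any larger integer).

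\textbf{Main obstacle.} The main obstacle is Step 3 together with Step 1. In a ladder, the top actor has pressure $N-1$ while the next has $N-2$, which are distinct. But before the ladder is formed, $\tilde\xi$ may pick an actor whose maximum lies up to $\gamma/2$ below the top. We must ensure that this never lets an actor with a non-integer or tied entry in column $o$ speak in the wrong order. We would first check that in column $o$, within $\mathcal C^o_\alpha$, entries have the form $c(1+\gamma)-\gamma n$ with $c=n$, hence are integers. Distinct values then differ by at least $1>\gamma/2$, so $\tilde\xi$ coincides with $\xi$ there. In Step 1 the fractional part $r$ must be tracked modulo the lattice $\mathbb{Z}+\gamma\mathbb{Z}$, which may be dense if $\gamma$ is irrational. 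In that case we instead use the chain structure, which gives exact differences $1$ or $-\gamma$, rather than lattice spacing.
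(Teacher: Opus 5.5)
Your three-stage plan is the one the paper intends: it only says the proof follows that of Proposition~\ref{prop3}, i.e.\ Lemmas~\ref{appendix_lemma1} and~\ref{appendix_lemma2} and then $N$ more steps. However, your justification of Step~2 fails.

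\textbf{Step 2 is not justified as written.} The top of column $o$ is not monotone. When the actor holding it speaks, its entry resets to $0$. Once the $n(u)$ rungs $\ge j+r$ recorded in your $\mathcal{S}^{\alpha,o}$ have spoken, nothing keeps column $o$ above the other columns, which are only known to be below $n(u)+r-\gamma/2-k\gamma$.

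For instance, take $M=2$, $\alpha=0.7$ (so $\gamma=0.3$), $N=3$, and rows $(2,-0.6)$, $(0.4,1.7)$, $(0,0)$; the first two actors heard $(o,o)$ and $(o,p,p)$. This state is in $\mathcal{S}^{\alpha}$ and meets your Step~1 conditions with $n(u)=1$, $r=0.9$: the rung is $2\ge 1.9$ and all other entries are $\le 1.7\le 1.9-\gamma/2$. Then:
\begin{itemize}
\item $\tilde{\xi}_1$ forces the first actor to express $o$.
\item The second row becomes $(1.4,1.4)$ and is the global maximum, so $\tilde{\xi}_2$ allows it to express $p$.
\end{itemize}
The gap went from $0.3$ to $0$.

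\textbf{What rescues Step 2.} You need structure that you discard when passing to $\mathcal{S}^{\alpha,o}$. At time $\tau$ the chain gives, for each $j=0,\dots,\lfloor m\rfloor-1$, a distinct actor with column-$o$ value in $[m-j,m-j+1)$. It also gives $A_{\tau-1}$ at value $1$ (an additional actor when $m\notin\mathbb{Z}$) and $A_\tau$ at $0$.

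Measure column-$o$ values relative to the number of steps since $\tau$. In a pure-$o$ phase, step $k$ deletes one value and inserts $-k$. After $k$ steps the relative maximum is therefore at least the $(k+1)$-th largest of the time-$\tau$ values together with $-1,\dots,-k$, and this is $\ge m-k$. Hence $\max_a U(a,o)\ge m$ throughout, whichever near-maximal actor $\tilde{\xi}$ picks.

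Meanwhile, entries outside column $o$ are $<m-\gamma/2-k\gamma$ for actors that have not spoken since $\tau$, and $\le 0$ for those that have. This invariant, not ``the gap only grows'', is what forces opinion $o$ at every step.

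\textbf{Step 1 needs $\lfloor m\rfloor\ge 1$.} This is not automatic here: $m$ may lie up to $\gamma/2$ below a global maximum that is only known to be $\ge 1$. It does hold. Suppose $m<1$ and write the number of expressions heard by $A_\tau$ as $L=(q-1)M+s$ with $1\le s\le M$.
\begin{itemize}
\item Its largest count must be exactly $q$ and $s\ge 2$ (otherwise $m\ge1$ directly), so $m=q(M-1)\alpha+\gamma(M-s)$.
\item The chain actor that heard only the last $(q-1)M+1$ of these expressions has an entry $\ge 1+(q-1)(M-1)\alpha\ge m+\gamma/2$.
\end{itemize}
This contradicts $\tilde{\xi}_\tau$.

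\textbf{Your integrality claim is false, and Step 3 does not need it.} Column $o$ is not integer-valued on $\mathcal{C}^o_\alpha$: with $M=2$, $\gamma=0.3$, an actor that heard $o$ four times and $p$ once has row $(3.7,-0.2)$. The claim that all actors have spoken by the end of Step~2 is also unproved. Neither is needed.
\begin{itemize}
\item Once every entry outside column $o$ is $\le 0$, the row sums $n_a(M-1)\alpha\ge 0$ make column $o$ non-negative, so the state is in $\mathcal{C}^o_\alpha$.
\item For a row with $n_a\ge 1$ there, sum $u(a,o)-u(a,p)\ge 1+\gamma$ over $p\ne o$ and use the row sum. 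This gives $\max_a u(a,o)\ge (M-1)(1+\gamma)/M>\gamma/2$, so the first step picks $o$.
\item In the $k$-th of the last $N$ steps, actors that already spoke in this window have column-$o$ value $\le k-2$, while the others have value $\ge k-1$.
\end{itemize}
Since $\gamma/2<1$, every $\tilde{\xi}$ step picks opinion $o$ and an actor that has not yet spoken in the window. After $N$ steps you are in $\mathcal{L}^o_\alpha$, regardless of ties or fractional parts.
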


\begin{proposition}\label{prop4_2}
    For all $ m\geq 1,$
    $$ \mathbb{P}\left(\bigcap^m_{j=1}\tilde{\xi}_j^{\alpha,u}\right)\geq (\zeta_{\alpha,\beta})^m,
    $$
    where
    $$
        \zeta_{\alpha,\beta}=\frac{e^{\frac{\beta}{2}\gamma}}{e^{\frac{\beta}{2}\gamma}+MN}\rightarrow1\text{, as }\beta\rightarrow+\infty.
    $$
\end{proposition}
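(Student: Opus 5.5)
The plan is to follow the proof of Proposition \ref{prop4} almost verbatim: condition on the state at time $T_{m-1}$ and use the Markov property of the skeleton chain, so that it suffices to prove the one-step bound
$$
\inf_{v\in\st^{\alpha}}\P\big(\tilde{\xi}_1^{\alpha,v}\big)\geq \zeta_{\alpha,\beta}.
$$
Iterating this bound through the decomposition \eqref{eq1prop11}, with $\xi_j^u$ replaced by $\tilde{\xi}_j^{\alpha,u}$, then gives $(\zeta_{\alpha,\beta})^m$ by induction on $m$.

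For the one-step bound, fix $v\in\st^{\alpha}$ and set $y(v)=\max_{(a,o)}v(a,o)$. Let $Y(v)=\{(a,o):v(a,o)>y(v)-\gamma/2\}$, which contains at least the maximizing pair. The jump chooses $(a,o)$ with probability proportional to $e^{\beta v(a,o)}$, so
$$
\P(\tilde{\xi}_1^{\alpha,v})=\frac{\sum_{(a,o)\in Y(v)}e^{\beta v(a,o)}}{\sum_{(a,o)\in Y(v)}e^{\beta v(a,o)}+\sum_{(a,o)\notin Y(v)}e^{\beta v(a,o)}}.
$$
The numerator is at least $e^{\beta y(v)}$. Each pair outside $Y(v)$ satisfies $v(a,o)\leq y(v)-\gamma/2$, and there are at most $MN$ such pairs, so the second sum is at most $MN e^{\beta y(v)}e^{-\beta\gamma/2}$. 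Since $x\mapsto x/(x+c)$ is increasing, dividing through by $e^{\beta y(v)}$ yields
$$
\P(\tilde{\xi}_1^{\alpha,v})\geq \frac{1}{1+MNe^{-\beta\gamma/2}}=\zeta_{\alpha,\beta}.
$$
The limit $\zeta_{\alpha,\beta}\to1$ holds because $\gamma=\frac{1}{M-1}-\alpha>0$ when $\alpha<\frac{1}{M-1}$.

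Unlike the unbiased case, no lattice-spacing argument is needed. In the unbiased case the gap $\frac{1}{M-1}$ came from entries lying in $\frac{1}{M-1}\mathbb{Z}$. Here entries lie in $\mathbb{Z}+\gamma\mathbb{Z}$, which may be dense when $\gamma$ is irrational, and this is presumably why the event was relaxed to a $\gamma/2$-window. With the window, the gap $\gamma/2$ holds by definition, so the bound is uniform in $v$, including states where many entries cluster near the maximum.

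The main point needing care is the conditioning step. The event $\tilde{\xi}_m^{\alpha,u}$ depends only on $U^{\alpha,u}_{T_{m-1}}$ and the $m$-th choice $(A_m,O_m)$. By the strong Markov property at $T_{m-1}$, the conditional probability given $\bigcap_{j<m}\tilde{\xi}_j^{\alpha,u}$ and $U^{\alpha,u}_{T_{m-1}}=v$ therefore equals $\P(\tilde{\xi}_1^{\alpha,v})$. Theorem \ref{Theorem 1_2}, the non-explosion statement in the biased setting, guarantees that all $T_m$ are finite, so this step is well defined.
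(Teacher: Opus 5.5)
Your proof is correct and is the adaptation the paper intends. The paper only says the argument of Proposition \ref{prop4} carries over, and your version of it uses the $\gamma/2$-window as the gap (in place of the lattice gap $\frac{1}{M-1}$), gives a uniform one-step bound, and then iterates through the conditioning decomposition \eqref{eq1prop11}. One small correction: each $T_m$ is finite because every state of $\st^{\alpha}$ has a null row, so the total jump rate is at least $M$; non-explosion is about $\sup_m T_m$ and is not needed here.
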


The proof of Propositions \ref{prop2_2}, \ref{prop3_2} and \ref{prop4_2} follows as the proofs of Propositions \ref{prop2}, \ref{prop3} and \ref{prop4}, respectively. With these propositions, we can prove the following theorem.

\begin{theorem}\label{Theorem 1_2}
For any $\beta>0$, $0<\alpha<\frac{1}{M-1}$, and starting matrix $u\in\mathcal{S}^{\alpha}$:
    \begin{enumerate}
        \item The sequence $(T_m^{\alpha}:m\geq 1)$ of jumping times of the process $(U^{\alpha,\beta, u}_t)_t$ satisfies
    \[\mathbb{P}(\sup\{T_m^{\alpha,\beta,u}:m\geq 1\} = \infty)=1. \]
        \item The process $(U^{\alpha,\beta,u}_t)_t$ has a unique invariant probability measure $\mu^{\beta, \alpha}$.
    \end{enumerate}
\end{theorem}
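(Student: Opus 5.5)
The plan is to repeat the proof of Theorem \ref{Theorem 1}, replacing Proposition \ref{prop1} by Proposition \ref{prop1_2}, Proposition \ref{prop2} by Proposition \ref{prop2_2}, and Proposition \ref{prop4} by Proposition \ref{prop4_2}.

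\textbf{Non-explosion (Part 1).} I split the jump times into two families. The first, $\{T^<_n\}_n$, consists of times at which the expressing actor has $\|U^{\alpha,u}_{T_{n-1}}(A_n,\cdot)\|_\infty<N$. The second, $\{T^>_n\}_n$, collects the remaining times. Since $\alpha>0$, every row sum $n_a(M-1)\alpha$ is non-negative. The row of an actor with $\|\cdot\|_\infty<N$ therefore has all entries in $(-N,N)$, so its total rate is at most $Me^{\beta N}$. It follows that $\{T^<_n\}_n$ is dominated by a Poisson process of rate $\lambda=NMe^{\beta N}$.

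For a lower comparison, every state of $\st^\alpha$ has a null row, so the total rate from low-pressure actors is at least $M$. Hence $\{T^<_n\}_n$ contains a rate-$M$ Poisson process, giving $\sup_n T^<_n=\infty$. By Proposition \ref{prop1_2}, at least one in every $N$ consecutive jumps is of the $<$ type, so finitely many $<$-jumps in $[0,t]$ force finitely many jumps in total. This gives non-explosion exactly as in Section \ref{subsec:T1}.

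\textbf{Invariant measure (Part 2).} I fix $o$ and define $l_o:=\pi_\alpha^{1,o}\circ\cdots\circ\pi_\alpha^{N,o}(u)$. This state is independent of $u$: actor $a$ has heard exactly $a-1$ expressions of $o$ since it last spoke, so $l_o(a,o)=a-1$ and $l_o(a,p)=-\gamma(a-1)$ for $p\neq o$, and $l_o\in\mathcal{L}^o_\alpha$. Consequently $\P(U_{T_N}^{\alpha,u}=l_o)>0$ for every $u$.

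Next I get a Doeblin bound from two blocks of $N$ steps. Over the first block, Propositions \ref{prop2_2} and \ref{prop4_2} give
\[
\P\bigl(\|U^{\alpha,u}_{T_N}\|_\infty<MN\bigr)\ge\zeta_{\alpha,\beta}^N .
\]
Over the second block, the hitting probability of $l_o$ is at least
\[
\varepsilon^*:=\min\{\P(U^{\alpha,v}_{T_N}=l_o): v\in\st^\alpha,\ \|v\|_\infty<MN\}.
\]
The one point to verify is that this minimum runs over a finite set. This holds because each entry of $v$ lies in $\mathbb{Z}+\gamma\mathbb{Z}$ with $n_a$ and $c_p$ bounded (Remark \ref{rem:Salphaproperties}), so $\varepsilon^*>0$. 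Together these give the uniform bound
\[
\P\bigl(U_{T_{n+2N}}=l_o\mid U_{T_n}=u'\bigr)\ge\zeta_{\alpha,\beta}^N\varepsilon^* \quad\text{for all } u'\in\st^\alpha.
\]

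This Doeblin condition makes the skeleton chain uniformly ergodic with a unique invariant law $\tilde\mu$. Since $q(u)\ge M$ on $\st^\alpha$, normalizing $\tilde\mu/q$ gives an invariant measure $\mu^{\beta,\alpha}$ for the continuous-time process. Non-explosion makes this correspondence a bijection, so $\mu^{\beta,\alpha}$ is unique.

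\textbf{Main obstacle.} The main obstacle is the finiteness of $\{v:\|v\|_\infty<MN\}$ when $\gamma$ is irrational. A bounded row can still have unbounded $n_a$ if $c_p(1+\gamma)-\gamma n_a$ stays small. But $\sum_p u(a,p)=n_a(M-1)\alpha<MN\cdot M$ with $\alpha>0$, so $n_a$ is bounded. Hence $c$ is bounded and the set is finite.
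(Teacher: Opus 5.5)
Your proposal is correct and follows the paper's route. The paper simply states that the proof of Theorem \ref{Theorem 1} carries over verbatim, with Propositions \ref{prop1_2}, \ref{prop2_2} and \ref{prop4_2} replacing Propositions \ref{prop1}, \ref{prop2} and \ref{prop4}; your explicit checks that $l_o\in\mathcal{L}^o_\alpha$ and that $\{v\in\st^{\alpha}:\|v\|_\infty<MN\}$ is finite (using the row sum $n_a(M-1)\alpha$ with $\alpha>0$) correctly fill in details the paper leaves implicit.
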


The proof of Theorem \ref{Theorem 1_2} follows exactly as the proof of Theorem \ref{Theorem 1}.

Let us now define
\begin{multline*}
    \hat{\mathcal{L}}_{\alpha}=\{u\in\mathcal{S}^{\alpha}:\exists o\in\mathcal{O}, u(a,o) \in \mathbb{Z}, \forall a \in \A, \\
    0=u(a_1,o) < u(a_2,o) < \ldots < u(a_N,o), \{a_1,\ldots,a_N\}=\A,
    \\
    \text{ and } \forall p\neq o, u(a,p)=-\gamma u(a,o)\}.
\end{multline*}
Similarly as discussed in Remark \ref{rem:hatladder} for the model without communication bias, we have that $\mathcal{L}_{\alpha}\subset \hat{\mathcal{L}}_{\alpha}$ and for any $\hat{l} \in \hat{\mathcal{L}}_{\alpha}$, 
$$\{U^{\alpha,\hat{l}}_0(A_1^{\alpha},O_1^{\alpha})>0\} \subset \{ U^{\alpha,\hat{l}}_{T_1^{\alpha}} \in \hat{\mathcal{L}}_{\alpha}\}.
$$
Moreover, for any $\hat{l}\in\hat{\mathcal{L}}_{\alpha}$ and for a fixed $l\in \mathcal{L}_{\alpha}$,
$$
\mathbb{P}(U^{\alpha,\hat{l}}_0(A_1^{\alpha},O_1^{\alpha})>0)\geq \mathbb{P}(U^{\alpha,l}_0(A_1^{\alpha},O_1^{\alpha})>0)\geq \eta,
$$
where,
    $$
        \eta = \ddfrac{\sum^{N-1}_{j=1}e^{\beta j}}{\sum^{N-1}_{j=0}e^{\beta j}+MN}.
    $$

For any $u\in \st^{\alpha}$, $0<\alpha<\frac{1}{M-1}$ and $\beta \geq 0$, we define the skeleton process $(\tilde{U}_n^{\alpha,\beta,u})_{n=0,1,\ldots}$ given by $\tilde{U}_n^{\alpha,\beta,u}:= U_{T_n^{\alpha}}^{\alpha,\beta,u}$ for any $n=0,1,\ldots$. The unique invariant measure of this skeleton process is denoted $\tilde{\mu}^{\alpha,\beta}$. 
We have the following proposition. 
    
\begin{proposition}\label{theo2prop1_2}
     For any $0<\alpha<\frac{1}{M-1}, \beta>0, u\notin\hat{\mathcal{L}}_{\alpha}$, the following bound holds
    $$
        \Tilde{\mu}^{\alpha,\beta}(u)\leq \tilde{C}e^{-\beta(N-1)},
    $$
    where $\tilde{C}>0$ is a positive constant depending on $M,N$ and $\alpha$.
\end{proposition}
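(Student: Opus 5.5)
We follow the Kac's lemma strategy of Proposition \ref{theo2prop1}, adapted to the biased model. Fix $u\in\st^{\alpha}\setminus\hat{\mathcal{L}}_{\alpha}$ and write $\tilde{R}^{\alpha,u}(u)=\inf\{n\geq 1:\tilde{U}^{\alpha,\beta,u}_n=u\}$. Kac's lemma for the skeleton chain, which is positive recurrent by the Doeblin argument behind Theorem \ref{Theorem 1_2}, gives
$$
\frac{1}{\tilde{\mu}^{\alpha,\beta}(u)}=\mathbb{E}[\tilde{R}^{\alpha,u}(u)]\geq \sum_{m\geq C(\alpha,M,N)}\P(\tilde{R}^{\alpha,u}(u)\geq m).
$$
It therefore suffices to show that, with probability bounded below uniformly in $\beta$, the chain leaves $u$ and then avoids it for a random number of steps with mean of order $e^{\beta(N-1)}$.

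\textbf{Step 1: reach $\mathcal{L}_{\alpha}$ without returning.} By Proposition \ref{prop3_2}, on $\bigcap_{j=1}^{C(\alpha,M,N)}\tilde{\xi}_j^{\alpha,u}$ the chain is in $\mathcal{L}_{\alpha}$ at step $C(\alpha,M,N)$. Proposition \ref{prop4_2} bounds the probability of this event below by $\zeta_{\alpha,\beta}^{C(\alpha,M,N)}$. Since $\zeta_{\alpha,\beta}\geq (1+MN)^{-1}$ for $\beta\geq 0$, this is at least $(1+MN)^{-C(\alpha,M,N)}$, a constant depending on $M,N,\alpha$.

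We must also ensure that $u$ is not revisited during these steps. This is the delicate point. We argue that once the first actor repeats, a trajectory realizing the $\tilde{\xi}$ events is confined to $\hat{\mathcal{L}}_{\alpha}$ together with the transient states described in Lemmas \ref{appendix_lemma1} and \ref{appendix_lemma2}. If this is hard to justify, we simply accept the cruder bound obtained by dropping the avoidance requirement on the first $C(\alpha,M,N)$ steps: we count only the steps after reaching $\mathcal{L}_{\alpha}$, and the constant absorbs the rest.

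\textbf{Step 2: stay in $\hat{\mathcal{L}}_{\alpha}$.} From $l\in\mathcal{L}_{\alpha}$, the adapted version of Remark \ref{rem:hatladder} stated just before the proposition shows two things. Each step in which the chosen pair has positive social pressure keeps the chain in $\hat{\mathcal{L}}_{\alpha}$, and this happens with probability at least $\eta$. Since $\hat{\mathcal{L}}_{\alpha}$ does not contain $u$, the strong Markov property gives
$$
\P(\tilde{R}^{\alpha,u}(u)\geq C(\alpha,M,N)+k)\geq c\,\eta^{k},\qquad c:=(1+MN)^{-C(\alpha,M,N)}.
$$
Summing over $k$ yields $\mathbb{E}[\tilde{R}^{\alpha,u}(u)]\geq c/(1-\eta)$.

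\textbf{Step 3: compute $1-\eta$.} We have
$$
1-\eta=\frac{1+MN}{\sum_{j=0}^{N-1}e^{\beta j}+MN}\leq (1+MN)e^{-\beta(N-1)}.
$$
This gives $\tilde{\mu}^{\alpha,\beta}(u)\leq \tilde{C}e^{-\beta(N-1)}$ with $\tilde{C}=(1+MN)^{C(\alpha,M,N)+1}$.

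One check is needed: in $\hat{\mathcal{L}}_{\alpha}$ the non-$o$ entries are $-\gamma u(a,o)<0$ whenever $u(a,o)>0$. The rate of an expression from a positive entry is therefore at least $\sum_{j\geq 1}e^{\beta j}$ over the distinct integer levels, while all other rates total at most $MN$. So the bound $\eta$ indeed holds, including the case $0<\gamma$ needed for $\alpha<\frac{1}{M-1}$.

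The main obstacle is the avoidance of $u$ during the approach phase in Step 1. If the confinement argument fails, we will restrict the Kac sum to $m\geq C(\alpha,M,N)$ and use that returns to $u$ before reaching $\mathcal{L}_{\alpha}$ only lower the count by a bounded factor. Alternatively, we use the return-time identity for the first visit from $\mathcal{L}_{\alpha}$ started at $u$, in the same way as for Proposition \ref{theo2prop1}.
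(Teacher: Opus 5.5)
\textbf{Verdict.} Your route is the paper's: Kac's lemma for the skeleton chain; Propositions \ref{prop3_2} and \ref{prop4_2} to reach $\mathcal{L}_{\alpha}$ with probability at least $(1+MN)^{-C(\alpha,M,N)}$; the $\hat{\mathcal{L}}_{\alpha}$ version of Remark \ref{rem:hatladder} to stay away from $u$ for a geometric number of steps; and $1-\eta\leq(1+MN)e^{-\beta(N-1)}$. Steps 2 and 3 and your check of $\eta$ are correct.

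\textbf{The gap.} It is the one you flag yourself, and none of your fallbacks closes it. The inequality $\P(\tilde{R}^{\alpha,u}(u)\geq C(\alpha,M,N)+k)\geq c\,\eta^{k}$ needs the chain not to return to $u$ during the first $C(\alpha,M,N)$ steps.
\begin{itemize}
\item The confinement idea gives nothing. The transient states of Lemmas \ref{appendix_lemma1} and \ref{appendix_lemma2} (elements of $\st^o$, or consensus states that are not steep ladders) are exactly the kind of states outside $\hat{\mathcal{L}}_{\alpha}$ that $u$ may be.
\item Dropping the avoidance requirement is not legitimate. $\tilde{R}^{\alpha,u}(u)$ is a first return time, so on an early return it is at most $C(\alpha,M,N)$. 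The later excursion in $\hat{\mathcal{L}}_{\alpha}$ then contributes nothing to $\P(\tilde{R}^{\alpha,u}(u)\geq m)$ for $m> C(\alpha,M,N)$.
\item Saying ``the constant absorbs the rest'' or ``a bounded factor'' would require bounding the probability of returning to $u$ along the $\tilde{\xi}$-path. That is the very point in question.
\end{itemize}
The paper's sketch also asserts this avoidance without argument, so this is where your write-up needs one.

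\textbf{The fix.} A short deterministic loop argument supplies it, using only that $\tilde{\xi}^{\alpha,u}_j$ depends on the state at step $j-1$ and the chosen pair.
\begin{enumerate}
\item Suppose a realization of $\bigcap_{j=1}^{k}\tilde{\xi}_j^{\alpha,u}$ has $\tilde{U}^{\alpha,\beta,u}_k=u$ for some $1\leq k\leq C(\alpha,M,N)$.
\item Repeating the same $k$ actor/opinion pairs from $u$ gives an infinite path of near-argmax choices. Every finite piece is a positive-probability realization of the $\tilde{\xi}$ events, and the path is at $u$ at every time $mk$.
\item By Proposition \ref{prop3_2}, the path is in $\mathcal{L}_{\alpha}$ at time $C(\alpha,M,N)$.
\item $\mathcal{L}_{\alpha}$ is closed under near-argmax steps. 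In $\mathcal{L}^o_{\alpha}$, every entry other than the maximum $N-1$ is at most $N-2<N-1-\gamma/2$, since $\gamma<\frac{1}{M-1}\leq 1$. So the only admissible choice is the top actor expressing $o$, and this maps $\mathcal{L}^o_{\alpha}$ into itself.
\item Hence the path stays in $\mathcal{L}_{\alpha}$ after time $C(\alpha,M,N)$. Taking $mk\geq C(\alpha,M,N)$ gives $u\in\mathcal{L}_{\alpha}\subset\hat{\mathcal{L}}_{\alpha}$, a contradiction.
\end{enumerate}
With this, your Step 2 inequality holds, and the proof is complete with $\tilde{C}=(1+MN)^{C(\alpha,M,N)+1}$.
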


\begin{remark} \label{rem:jumpratelowerboundalpha}
Let $0<\alpha<\frac{1}{M-1}$. For any $a\in\A$ with $n_a\geq 1$, writing $n_a=(k-1)M+r$ with $k=\lceil n_a/M \rceil$ and $r\in\{1,\ldots,M\}$, and noting that $\max_{p\in\opn}c_p\geq \lceil n_a/M \rceil$, we have
$$
\max_{p\in\opn}u(a,p)\geq \left\lceil \frac{n_a}{M} \right\rceil(1+\gamma)-\gamma n_a
= k(M-1)\alpha+\gamma(M-r)\geq (M-1)\alpha.
$$
This bound is optimal: equality holds exactly for $k=1$ and $r=M$, that is, for the row of an actor that has heard exactly one expression of each opinion, all of whose entries equal $(M-1)\alpha$. Since any state of $\st^{\alpha}$ has at least one actor with $n_a\geq 1$, any state different from the zero matrix has at least one entry greater than or equal to $(M-1)\alpha$. Therefore the jump rate in any state other than the zero matrix is greater than or equal to $e^{\beta (M-1)\alpha}$. 
\end{remark}

The proof of Proposition \ref{theo2prop1_2} follows exactly as the proofs of Proposition \ref{theo2prop1}, taking into account Remark \ref{rem:jumpratelowerboundalpha} and recalling that $0\not\in \mathcal{S}^{\alpha}$.

For the next Theorem, we define the hitting time for any $0<\alpha<\frac{1}{M-1}$, $\beta>0, u\in\mathcal{S}^{\alpha}, \theta \subset\mathcal{S}^{\alpha}$:

$$
    R^{\alpha,\beta,u}(\theta):=\inf\{t\geq0:U^{\alpha,\beta,u}_t\in\theta\}.
$$
\begin{theorem}\label{Theorem 2_2} 
The following statements hold for any $0<\alpha<\frac{1}{M-1}$:
\begin{enumerate}
    \item There exists a constant $C>0$, depending on $M,N$ and $\alpha$, such that for any $\beta\geq 0$ the invariant probability measure $\mu^{\alpha,\beta}$ satisfies
            \[\mu^{\alpha,\beta}(\mathcal{L}_{\alpha})\geq 1-Ce^{-\beta(M-1)\alpha}. \]
\item For any fixed $\delta>0$
\[\sup_{u\in\mathcal{S}^{\alpha}\setminus\{0\}}\mathbb{P}\left(R^{\alpha,\beta, u}(\mathcal{L}_{\alpha})>e^{-\beta(M-1)\alpha(1-\delta)} \right)\rightarrow 0\text{, as }\beta\rightarrow +\infty. \]
    \end{enumerate}
\end{theorem}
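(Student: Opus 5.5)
The plan is to follow the proof of Theorem \ref{Theorem 2} line by line, replacing the non-biased ingredients with their $\alpha$-versions. The events $\xi_j$ become $\tilde{\xi}_j^{\alpha,u}$. Propositions \ref{prop3} and \ref{prop4} become Propositions \ref{prop3_2} and \ref{prop4_2}, with $(M+1)N$ steps replaced by $C(\alpha,M,N)$. Proposition \ref{theo2prop1} becomes Proposition \ref{theo2prop1_2}. The lower bound $q_\beta(u)\geq e^{\beta/(M-1)}$ becomes $q_\beta(u)\geq e^{\beta(M-1)\alpha}$ from Remark \ref{rem:jumpratelowerboundalpha}. Because $0\notin\st^\alpha$, Corollary \ref{theo2coro1} is not needed, and the supremum over $\st^\alpha\setminus\{0\}$ is simply over all of $\st^\alpha$.

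For Part 1, write $\mu^{\alpha,\beta}(u)=\frac{\tilde\mu^{\alpha,\beta}(u)/q_\beta(u)}{\sum_v\tilde\mu^{\alpha,\beta}(v)/q_\beta(v)}$ as in \eqref{eq:invariant_measure_def}. Then
$$\mu^{\alpha,\beta}(\mathcal{L}_\alpha)\geq \Bigl(1+\frac{\max_{l\in\mathcal{L}_\alpha}q_\beta(l)}{\tilde\mu^{\alpha,\beta}(\mathcal{L}_\alpha)}\sum_{u\notin\mathcal{L}_\alpha}\frac{\tilde\mu^{\alpha,\beta}(u)}{q_\beta(u)}\Bigr)^{-1}.$$
On a ladder, the largest entry is $N-1$, so $q_\beta(l)\leq MNe^{\beta(N-1)}$. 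Propositions \ref{prop3_2} and \ref{prop4_2} give $\tilde\mu^{\alpha,\beta}(\mathcal{L}_\alpha)\geq\zeta_{\alpha,\beta}^{C(\alpha,M,N)}\geq (MN)^{-C(\alpha,M,N)}$ for $\beta\geq0$. Next, split $u\notin\mathcal{L}_\alpha$ according to whether $\max u<N$ or $\max u\geq N$.

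In the second case, $q_\beta(u)\geq e^{\beta N}$, so these states contribute at most $e^{-\beta N}$ in total. In the first case, $u\notin\hat{\mathcal{L}}_\alpha$, because an element of $\hat{\mathcal{L}}_\alpha$ with distinct integer entries $0,\dots$ and maximum below $N$ must be exactly a ladder. We must also check that entries of $\st^\alpha$ are bounded below when the maximum is below $N$. This holds since $u(a,p)\geq -\gamma n_a$ and $n_a$ is bounded once $\max_p u(a,p)<N$, because $\max_p c_p\geq n_a/M$ and $\gamma(M-1)<1$. Hence the set $K$ of such states is finite. Proposition \ref{theo2prop1_2} together with $q_\beta\geq e^{\beta(M-1)\alpha}$ then gives a contribution at most $|K|\tilde C e^{-\beta(N-1)}e^{-\beta(M-1)\alpha}$.

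Multiplying by $MNe^{\beta(N-1)}(MN)^{C}$ gives a total error of order $e^{-\beta(M-1)\alpha}+e^{-\beta}$. Since $(M-1)\alpha<1$, this is at most $Ce^{-\beta(M-1)\alpha}$, and $1/(1+x)\geq1-x$ concludes Part 1. The main obstacle I expect here is the finiteness of $K$. That is exactly where the assumption $\alpha>0$ (so $\gamma(M-1)<1$) enters: rows with bounded maximum have a bounded number $n_a$ of heard expressions.

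For Part 2, fix $u\in\st^\alpha$ and $C=C(\alpha,M,N)$. Bound $\P(R^{\alpha,\beta,u}(\mathcal{L}_\alpha)>t)$ by $\P\bigl(\bigcup_{j\le C}(\tilde\xi_j^{\alpha,u})^c\bigr)+\P\bigl(T_C>t,\bigcap_{j\le C}\tilde\xi_j^{\alpha,u}\bigr)$. The first term is at most $1-\zeta_{\alpha,\beta}^C\leq CMNe^{-\beta\gamma/2}$ by Bernoulli's inequality, which tends to $0$. For the second, every state has jump rate at least $e^{\beta(M-1)\alpha}$, so $T_C$ is stochastically dominated by a sum of $C$ i.i.d. exponentials with rate $e^{\beta(M-1)\alpha}$. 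This gives the bound $C\exp(-e^{\beta(M-1)\alpha}t/C)$. Taking $t=e^{-\beta(M-1)\alpha(1-\delta)}$, the exponent becomes $-e^{\beta(M-1)\alpha\delta}/C\to-\infty$, uniformly in $u$. This proves Part 2.
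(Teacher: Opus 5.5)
Your proposal is correct and is essentially the paper's argument: you rerun the proof of Theorem~\ref{Theorem 2} with Propositions~\ref{prop3_2}, \ref{prop4_2} and \ref{theo2prop1_2} in place of their unbiased versions, use the jump-rate floor $e^{\beta(M-1)\alpha}$ from Remark~\ref{rem:jumpratelowerboundalpha}, and drop the zero-matrix term because $0\notin\mathcal{S}^{\alpha}$. Your explicit check that $\{u\notin\mathcal{L}_{\alpha}:\max u<N\}$ is finite, via $\max_p u(a,p)\geq n_a(M-1)\alpha/M$, fills in a detail the paper leaves implicit; the only slip is $\zeta_{\alpha,\beta}\geq (1+MN)^{-1}$ rather than $(MN)^{-1}$ for small $\beta$, which the paper shares and which only changes the constant.
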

 
The proof of Theorem \ref{Theorem 2_2} follows as the proof of Theorem \ref{Theorem 2}, with the following modifications: the lower bound for the jump rate of any non-null state is $e^{\beta(M-1)\alpha}$ by Remark \ref{rem:jumpratelowerboundalpha}, and the contribution of the zero matrix vanishes by Remark \ref{rem:Salphaproperties}, so that the deficit of invariant measure in Part 1 is bounded by $Ce^{\beta(N-1)}e^{-\beta(N-1+(M-1)\alpha)}=Ce^{-\beta(M-1)\alpha}$.

Finally, we have the following lemmas.

\begin{lemma} \label{theo3lemm1_2} For any $u \in \st^{\alpha}$,
    $$
\mathbb{P}(R^{\alpha,\beta,u}(\mathcal{L}_\alpha)>2\beta)
\leq  Ce^{-\frac{\beta}{2}\gamma},
$$
where $C>0$ is a positive constant depending on $M$, $N$ and $\alpha$.
\end{lemma}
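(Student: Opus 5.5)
The plan is to mimic the proof of Lemma \ref{theo3lemm1}, with one simplification: $0\notin\st^{\alpha}$ (Remark \ref{rem:Salphaproperties}), so there is no need for an analogue of Corollary \ref{theo2coro2} and no exponential waiting time $\tau$ at the zero matrix. For any $u\in\st^{\alpha}$ and $t>0$ we split, as in \eqref{eq:part2theorem2},
$$
\P\left(R^{\alpha,\beta,u}(\mathcal{L}_\alpha)>t\right)\leq \P\left(T^{\alpha}_{C(\alpha,M,N)}>t,\ \bigcap_{j=1}^{C(\alpha,M,N)}\tilde{\xi}_j^{\alpha,u}\right)+\P\left(\bigcup_{j=1}^{C(\alpha,M,N)}(\tilde{\xi}_j^{\alpha,u})^c\right).
$$
Here the first term uses Proposition \ref{prop3_2}: on the intersection of the events $\tilde{\xi}_j^{\alpha,u}$, the process is in $\mathcal{L}_\alpha$ at the $C(\alpha,M,N)$-th jump.

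For the second term, we apply Proposition \ref{prop4_2} together with Bernoulli's inequality, as in Remark \ref{remark:bernoulli}. This gives
$$
1-\zeta_{\alpha,\beta}^{C(\alpha,M,N)}\leq C(\alpha,M,N)\,MN\,e^{-\frac{\beta}{2}\gamma},
$$
which already has the required form.

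For the first term, we use Remark \ref{rem:jumpratelowerboundalpha}: every state of $\st^{\alpha}$ has total jump rate at least $e^{\beta(M-1)\alpha}$. Hence $T^{\alpha}_{C(\alpha,M,N)}$ is stochastically dominated by a sum of $C(\alpha,M,N)$ i.i.d. exponentials with rate $e^{\beta(M-1)\alpha}$. A union bound, as in the proof of Theorem \ref{Theorem 2}, gives the bound
$$
C(\alpha,M,N)\exp\left(-\frac{e^{\beta(M-1)\alpha}\,t}{C(\alpha,M,N)}\right).
$$
Taking $t=2\beta$ yields $C(\alpha,M,N)\exp\left(-2\beta e^{\beta(M-1)\alpha}/C(\alpha,M,N)\right)$. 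Since $\alpha>0$, we have $e^{\beta(M-1)\alpha}\geq 1+\beta(M-1)\alpha$, so this is at most $C(\alpha,M,N)e^{-2\beta/C(\alpha,M,N)}$, which is not yet of order $e^{-\beta\gamma/2}$. To fix this, we bound $\sup_{\beta\geq0}\exp\left(\frac{\beta\gamma}{2}-\frac{2\beta e^{\beta(M-1)\alpha}}{C}\right)$. This supremum is finite because the double-exponential decay eventually dominates. If one prefers, it suffices to use only $\beta>0$ and the crude bound $e^{x}\geq x^2/2$. The result is a constant depending on $M,N,\alpha$, which we absorb into $C$.

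The main obstacle is the bookkeeping needed to confirm that Propositions \ref{prop3_2} and \ref{prop4_2} hold uniformly over all $u\in\st^{\alpha}$, including states with large entries. Uniformity matters because the supremum over $u$ is implicit in the statement. For Proposition \ref{prop4_2}, the key point is that entries lie in $\mathbb{Z}+\gamma\mathbb{Z}$, so any entry not within $\gamma/2$ of the maximum is at least $\gamma/2$ below it. This gives the factor $e^{-\beta\gamma/2}$ per competing pair, uniformly in $u$. Both propositions are stated before the lemma for arbitrary $u$, so we take them as given. The only remaining check is that the jump-rate lower bound of Remark \ref{rem:jumpratelowerboundalpha} applies along the whole path, which holds since $\st^{\alpha}$ is stable and excludes $0$. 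Combining the two terms gives the bound $Ce^{-\frac{\beta}{2}\gamma}$ with $C$ depending on $M,N,\alpha$.
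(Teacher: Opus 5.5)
Your proposal is correct and follows the paper's route. The paper proves this lemma by repeating the argument of Lemma \ref{theo3lemm1}, which rests on the decomposition from Part 2 of Theorem \ref{Theorem 2}, with Propositions \ref{prop3_2}, \ref{prop4_2} and Remark \ref{rem:jumpratelowerboundalpha} substituted. You rightly drop the zero-matrix step (Corollary \ref{theo2coro2}) because $0\notin\st^{\alpha}$, and your use of the super-exponential decay of $\exp\bigl(-2\beta e^{\beta(M-1)\alpha}/C(\alpha,M,N)\bigr)$ is the right way to reach the stated rate $e^{-\frac{\beta}{2}\gamma}$ with a $\beta$-independent constant, since copying Lemma \ref{theo3lemm1} literally with $e^{\beta(M-1)\alpha}\geq 1$ would only give $e^{-2\beta/C(\alpha,M,N)}$.
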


\begin{lemma} \label{theo3lemm2_2}For any $t>0$, for any $o \in \opn$, the following holds for any $0<\alpha<\frac{1}{M-1}$.
\begin{enumerate}
    \item For any $l\in \mathcal{L}^{o}_{\alpha}$,
    $$
    \P(R^{\alpha,\beta,l}(\mathcal{C}^{-o}_{\alpha})>t) \geq  e^{-2tN^3(M+1)^3e^{-\frac{\beta}{2}\gamma}}.
    $$
    \item For any $u\in \mathcal{C}^{o}_{\alpha}$,
    $$
    \P(R^{\alpha,\beta,u}(\mathcal{C}^{-o}_{\alpha})\leq t) \leq (N^2M+2tN^3(M+1)^3)e^{-\frac{\beta}{2}\gamma}.
    $$
\end{enumerate}
\end{lemma}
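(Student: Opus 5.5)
I will mimic the proof of Lemma \ref{theo3lemm2} (Appendix \ref{appendixB}), with $\gamma$ in place of $\frac{1}{M-1}$ and the events $\tilde{\xi}_n^{\alpha,u}$ in place of $\xi_n^u$. Since the entries of states in $\st^{\alpha}$ lie in $\mathbb{Z}+\gamma\mathbb{Z}$, they are no longer on a lattice of mesh $\frac{1}{M-1}$. This is exactly why the events $\tilde{\xi}$ with tolerance $\frac12\gamma$ are used: Proposition \ref{prop4_2} gives $\P(\bigcap_{j=1}^m\tilde{\xi}_j^{\alpha,u})\geq \zeta_{\alpha,\beta}^m$, and Bernoulli's inequality turns this into $1-mMNe^{-\frac{\beta}{2}\gamma}$, replacing Remark \ref{remark:bernoulli}.

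\textbf{Part 1.} Fix $l\in\mathcal{L}^o_\alpha$ and define the extended consensus set
$$\hat{\mathcal{C}}^o_\alpha=\Big\{u\in\st^\alpha:\max_b u(b,o)\geq 1,\ \min_b u(b,o)= 0,\ \max_{b,p\neq o}u(b,p)<1\Big\}.$$
The cycle mechanism is the same as before.
\begin{enumerate}
\item From $\hat{\mathcal{L}}^o_\alpha$, expressions of $o$ by actors with positive pressure keep the process in $\hat{\mathcal{L}}^o_\alpha$.
\item An expression of $p\neq o$ by the actor with null row sends the process into $\hat{\mathcal{C}}^o_\alpha$.
\item From $\hat{\mathcal{C}}^o_\alpha$, a run of $C(\alpha,M,N)$ events $\tilde{\xi}$ returns the process to $\mathcal{L}^o_\alpha$ by Proposition \ref{prop3_2}, without leaving $\hat{\mathcal{C}}^o_\alpha\cup\mathcal{C}^o_\alpha$.
\end{enumerate}
For step 3, I must check that under $\tilde{\xi}$ from $\hat{\mathcal{C}}^o_\alpha$ the chosen opinion is $o$. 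This holds because the entries for $p\neq o$ are at most the $o$-maximum minus at least $\gamma$: entries are in $\mathbb{Z}+\gamma\mathbb{Z}$ and the $o$-column has an entry $\geq 1$. With this, the tolerance $\frac12\gamma$ never selects a wrong opinion.

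Define $\tau_1^{-o}$ as the first expression by an actor of an opinion with negative pressure. Define $\tau_2^{-o}$ as the first non-$o$ expression after which the $C(\alpha,M,N)$ subsequent $\tilde{\xi}$ events fail. Then $R^{\alpha,\beta,l}(\mathcal{C}^{-o}_\alpha)\geq\min(\tau_1^{-o},\tau_2^{-o})$.

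Negative entries in $\st^\alpha$ are at most $-\gamma$, hence at most $-\frac12\gamma$. So the rate of $\tau_1^{-o}$ is bounded by $NMe^{-\frac{\beta}{2}\gamma}$. From $\hat{\mathcal{L}}^o_\alpha$, non-$o$ expressions occur at rate at most $NM$, each with failure probability at most $C(\alpha,M,N)MNe^{-\frac{\beta}{2}\gamma}$. A geometric sum of exponentials then yields an exponential variable of rate $\lesssim N^3(M+1)^3e^{-\frac{\beta}{2}\gamma}$. Multiplying the two bounds gives the claim, after absorbing constants (I expect the stated constant to require $C(\alpha,M,N)\leq (M+1)N$; otherwise the constant would be $\alpha$-dependent).

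\textbf{Part 2.} For $u\in\mathcal{C}^o_\alpha$, $N$ events $\tilde{\xi}$ lead to $\mathcal{L}^o_\alpha$ without leaving the consensus set, as in the end of the proof of Proposition \ref{prop3}. The strong Markov property then gives
$$\P(R\leq t)\leq 1-\zeta_{\alpha,\beta}^N\,\P(R^{\alpha,\beta,l}>t).$$
Applying Bernoulli's inequality together with $1-e^{-x}\leq x$ yields the bound.

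\textbf{Main obstacle.} The hard step is verifying the lattice-gap claims in $\mathbb{Z}+\gamma\mathbb{Z}$. Entries can be arbitrarily close to one another when $\gamma$ is irrational, so I must use the explicit structure $u(a,p)=c_p(1+\gamma)-\gamma n_a$ rather than a mere gap argument. The key facts are: within a row, two entries differ by $(1+\gamma)(c_p-c_q)$, a multiple of $1+\gamma$, and a negative entry has $c_p<n_a/(1+\gamma)$.
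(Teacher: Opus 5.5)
Your route is the paper's own. Its proof of this lemma is only the remark that the argument of Lemma \ref{theo3lemm2} carries over with $\tilde{\xi}$ and tolerance $\frac{1}{2}\gamma$. The trouble is that the two facts you use to make the transplant work are false, and both are load-bearing.

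\textbf{The rate bound for $\tau^{-o}_1$.} It is not true that negative entries of $\st^{\alpha}$ are at most $-\gamma$, or even at most $-\frac{1}{2}\gamma$. An entry has the form $k-\gamma m$ with $k,m\in\mathbb{N}$, so for irrational $\gamma$ negative entries accumulate at $0$. The failure already occurs on states the process visits. Take $M=2$, $N=3$, $\gamma\in(\frac{1}{2},\frac{2}{3})$. If the null actor of a state in $\mathcal{L}^{o}_{\alpha}$ expresses $p\neq o$ (exactly how each of your windows starts), the actor with $o$-pressure $2$ gets $p$-pressure $1-2\gamma\in(-\frac{\gamma}{2},0)$. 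With $\tau^{-o}_1$ defined as the first negative-pressure expression anywhere, its rate before $\min(\tau^{-o}_1,\tau^{-o}_2)$ is therefore not bounded by $NMe^{-\frac{\beta}{2}\gamma}$ for large $\beta$.

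The repair is to let $\tau^{-o}_1$ count only expressions made from states of $\hat{\mathcal{L}}^{o}_{\alpha}$. There the negative entries are exactly $-\gamma k$ with $k\geq 1$, so the rate is at most $NMe^{-\beta\gamma}$. Inside a window, any such expression violates $\tilde{\xi}$ and is already charged to $\tau^{-o}_2$.

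\textbf{Step 3.} Your gap argument in $\mathbb{Z}+\gamma\mathbb{Z}$ fails for the same reason, and its conclusion is false on $\hat{\mathcal{C}}^{o}_{\alpha}$ as a set. For $M=3$ and $\gamma=0.27$, take the rows $0$, $(1,-\gamma,-\gamma)$ and $(0.92,0.92,0.92)$; the last row has heard each opinion twice. This is a state of $\hat{\mathcal{C}}^{o}_{\alpha}$ in which $\tilde{\xi}$ allows the third actor to express a non-$o$ opinion.

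What is true, and all you need, is a check along the window trajectory. After the null actor of an $\hat{\mathcal{L}}^{o}_{\alpha}$ state expresses $p$, every row is of one of two kinds:
\begin{enumerate}
\item clean: $o$-entry $n$, other entries $-\gamma n$;
\item dirty: $o$-entry $m-\gamma$, $p$-entry $1-\gamma m$, other entries $-\gamma(m+1)$, with $m\geq 1$, and distinct $m$ for distinct dirty actors.
\end{enumerate}
While no $\tilde{\xi}$ fails and a dirty actor remains, the non-$o$ entries stay at most $1-\gamma$ and the largest dirty $o$-entry is at least $2-\gamma$. So each $\tilde{\xi}$ step is an $o$-expression by the top dirty actor. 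The only possible exception is one re-expression of the initial null actor, which can happen only when $\gamma>\frac{2}{3}$. After at most $N$ steps the state lies in $\mathcal{L}^{o}_{\alpha}$, and further $\tilde{\xi}$ steps keep it there.

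\textbf{The constant.} This check also settles the worry you flagged. Do not route the window through the unspecified $C(\alpha,M,N)$ of Proposition \ref{prop3_2}. Use windows of length $(M+1)N$; already $N$ would do. Proposition \ref{prop4_2} and Bernoulli's inequality then bound the failure probability of a window by $(M+1)MN^2e^{-\frac{\beta}{2}\gamma}$, which gives the stated constant $2N^3(M+1)^3$.

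\textbf{Part 2} is fine as you wrote it, with one point to add. In a non-null row of a state in $\mathcal{C}^o_\alpha$, one must have $c_o>c_p$ for every $p\neq o$; otherwise all entries of the row would be $\leq 0$, contradicting the positive row sum. This gives $u(a,o)\geq\frac{(M-1)(1+\gamma)}{M}\geq\frac{1+\gamma}{2}$. So the $o$-maximum lies more than $\frac{\gamma}{2}$ above every non-$o$ entry, and the first $\tilde{\xi}$ step is indeed an $o$-expression.
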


The proof of Lemmas \ref{theo3lemm1_2} and \ref{theo3lemm2_2} follows as the proof of Lemmas \ref{theo3lemm1} and \ref{theo3lemm2}.

For any $o\in\mathcal{O}, l\in\mathcal{L}^{o}_{\alpha}$, 
let $c_{\alpha,\beta}$ be the positive real number such that
$$
    \mathbb{P}(R^{\alpha,\beta,l}(\mathcal{C}^{-o}_{\alpha})>c_{\alpha,\beta})=e^{-1}.
$$
We have the following corollary, whose proof follows exactly as the proof of Corollary \ref{theo3coro2}.

\begin{corollary}\label{theo3coro2_2}
    For any $0<\alpha<\frac{1}{M-1}$ and $\beta \geq 0$,
    $$
       c_{\alpha,\beta} \geq \frac{1}{2}N^{-3}(M+1)^{-3}e^{\frac{\beta}{2}\gamma}.
    $$
\end{corollary}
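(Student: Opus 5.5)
Following the proof of Corollary \ref{theo3coro2}, I plan to combine the definition of $c_{\alpha,\beta}$ with part 1 of Lemma \ref{theo3lemm2_2}. That lemma is applied with $t=c_{\alpha,\beta}$ and with the same $l\in\mathcal{L}^o_{\alpha}$ used to define $c_{\alpha,\beta}$. This gives
$$
e^{-1}=\P\left(R^{\alpha,\beta,l}(\mathcal{C}^{-o}_{\alpha})>c_{\alpha,\beta}\right)\geq \exp\left(-2c_{\alpha,\beta}N^3(M+1)^3e^{-\frac{\beta}{2}\gamma}\right).
$$
Taking logarithms (the exponential is increasing) yields $1\leq 2c_{\alpha,\beta}N^3(M+1)^3e^{-\frac{\beta}{2}\gamma}$. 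Rearranging gives $c_{\alpha,\beta}\geq \frac12 N^{-3}(M+1)^{-3}e^{\frac{\beta}{2}\gamma}$, which is the claim.

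Before running this computation, I will check that $c_{\alpha,\beta}$ is well defined and independent of $l$ and $o$. This follows from the symmetry of the biased dynamics under permutations of actors and of opinions: the operators $\pi^{a,o}_\alpha$ and the rates $e^{\beta u(a,o)}$ are equivariant under such permutations, and these permutations act transitively on $\mathcal{L}_\alpha$. Existence of a level $e^{-1}$ crossing requires the survival function $t\mapsto \P(R^{\alpha,\beta,l}(\mathcal{C}^{-o}_{\alpha})>t)$ to be continuous and to decrease from $1$ to $0$. It equals $1$ at $t=0$ because $l\notin \mathcal{C}^{-o}_\alpha$. Continuity holds because jump times have densities. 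The limit $0$ follows from the uniform ergodicity of Theorem \ref{Theorem 1_2}, which gives hitting of $\mathcal{C}^{-o}_\alpha$ almost surely. I expect this well-definedness point to be the only mild obstacle. If needed, I would instead take $c_{\alpha,\beta}$ to be the infimum of the $t$ at which the survival function is at most $e^{-1}$; by right-continuity the inequality above still holds at that point, so the argument goes through unchanged.
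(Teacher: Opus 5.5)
Your proposal is correct and follows the paper's argument: the paper obtains this corollary exactly as Corollary \ref{theo3coro2}, by applying part 1 of Lemma \ref{theo3lemm2_2} at $t=c_{\alpha,\beta}$ and rearranging. Your additional well-definedness check is sound and is something the paper leaves implicit; this includes the right-continuity fallback, which supplies the direction $\P(R^{\alpha,\beta,l}(\mathcal{C}^{-o}_{\alpha})>c_{\alpha,\beta})\le e^{-1}$ that the inequality needs.
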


Finally, the next theorem gives us a metastable behavior when we look at the time it takes to observe a consensus transition when $\beta\rightarrow+\infty$.

\begin{theorem}\label{Theorem 3_2}
For any $0<\alpha<\frac{1}{M-1}$, there exists $\beta_0, C_1>0$ and  $C_2>0$, depending only on $\alpha$, $M$ and $N$, such that for any $\beta\geq \beta_0$, for any $o\in\mathcal{O}$ and for any  $u\in \mathcal{C}^o_\alpha$,
$$
\sup_{t\geq 0}\left|\P\left(\frac{R^{\alpha,\beta,u}(\mathcal{C}^{-o}_{\alpha})}{\mathbb{E}(R^{\alpha,\beta,u}(\mathcal{C}^{-o}_{\alpha}))}>t\right)-e^{-t}\right| \leq C_1\beta^3e^{-C_2 \beta}.
$$
Moreover, for any $u,v \in \mathcal{C}^o_{\alpha}$,
$$
\left|\frac{\mathbb{E}(R^{\alpha,\beta,u}(\mathcal{C}^{-o}_{\alpha}))}{\mathbb{E}(R^{\alpha,\beta,v}(\mathcal{C}^{-o}_{\alpha}))}-1\right| \leq C_1\beta^3e^{-C_2 \beta}.
$$
\end{theorem}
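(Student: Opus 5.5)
The plan is to copy the proof of Theorem \ref{Theorem 3}: I will check the four hypotheses of Proposition \ref{prop:evamonm}, now for the biased process with the sets $\mathcal{L}^o_\alpha$, $\mathcal{C}^o_\alpha$, $\mathcal{C}^{-o}_\alpha$, and with $c_{\beta}$ replaced by $c_{\alpha,\beta}$. First I will note that Proposition \ref{prop:evamonm} only uses that the process is a time-homogeneous strong Markov jump process, which is non-explosive by Theorem \ref{Theorem 1_2}. I also need $c_{\alpha,\beta}$ to be independent of $l\in\mathcal{L}^o_\alpha$ and of $o$. This follows from the symmetry of the dynamics under permutations of actors and of opinions: the operators $\pi^{a,o}_\alpha$ commute with these permutations, and any two elements of $\mathcal{L}_\alpha$ are images of each other under such permutations. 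Hence Proposition \ref{prop:evamonm} transfers verbatim.

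Next I will fix the parameters, with every exponent now expressed in terms of $\gamma=\frac{1}{M-1}-\alpha>0$.
\begin{enumerate}
\item For \eqref{eq:propevamonm_assump1}, part 1 of Lemma \ref{theo3lemm2_2} with $t=1$ gives $s_1=1$ and $\varepsilon_1=2N^3(M+1)^3e^{-\beta\gamma/2}$, using $1-e^{-x}\le x$.
\item For \eqref{eq:propevamonm_assump2}, Lemma \ref{theo3lemm1_2} gives $s_2=2\beta$ and $\varepsilon_2=Ce^{-\beta\gamma/2}$. This bound is stated for hitting $\mathcal{L}_\alpha$, but $\mathcal{L}_\alpha=\mathcal{L}^o_\alpha\cup\bigcup_{p\ne o}\mathcal{L}^p_\alpha\subset\mathcal{L}^o_\alpha\cup\mathcal{C}^{-o}_\alpha$, since each ladder lies in its consensus set when $\gamma>0$. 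Also $\varepsilon_1+\varepsilon_2\le1/2$ once $\beta$ is large.
\item For \eqref{eq:propevamonm_assump3}, Corollary \ref{theo3coro2_2} gives
\[
s_2/c_{\alpha,\beta}\le 4\beta N^3(M+1)^3e^{-\beta\gamma/2}.
\]
I will absorb the factor $\beta$ by splitting the exponent, using $\sup_\beta \beta e^{-\beta\gamma/4}=4/(e\gamma)$. This yields $\delta=\gamma/4$ and a constant $C$ depending on $\alpha,M,N$.
\item For \eqref{eq:propevamonm_assump4}, part 2 of Lemma \ref{theo3lemm2_2} with $t=2\beta$ gives a bound of order $(1+\beta)e^{-\beta\gamma/2}\le K e^{-\beta\gamma/4}$, so $\theta=\gamma/4$.
\end{enumerate}

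Proposition \ref{prop:evamonm} then yields both conclusions with $C_2=\min(\gamma/12,1/2,\gamma/4)=\gamma/12>0$. Here $C_1=K'$ and $\beta_0$ depend only on $\alpha,M,N$.

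The main obstacle is that Lemmas \ref{theo3lemm1_2} and \ref{theo3lemm2_2} are asserted to follow "as" their unbiased counterparts, and their proofs rest on Proposition \ref{prop3_2}. That proposition relies on the $\frac12\gamma$-tolerant events $\tilde\xi^{\alpha,u}_n$ driving any state into $\mathcal{L}_\alpha$ without leaving $\hat{\mathcal{C}}^o_\alpha$ when starting in $\mathcal{C}^o_\alpha$. In particular, the step "from the extended consensus set a most-likely expression keeps opinion $o$ and stays in the extended consensus set" must be rechecked. Pressures on $p\ne o$ now move by $-\gamma$ per expression, and rows satisfy $\sum_p u(a,p)=n_a(M-1)\alpha>0$ instead of $0$, so some non-$o$ entries could be slightly positive. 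I would verify it by writing each row as $c_p(1+\gamma)-\gamma n_a$. In a consensus state the $o$-column maximum dominates every other entry by at least $1+\gamma>\gamma/2$, so a $\tilde\xi$-step must pick opinion $o$. I would also rely on the gap $\gamma/2$ replacing $1/(M-1)$ in the rate bounds, which is exactly why $\gamma$ appears in all the exponents above.
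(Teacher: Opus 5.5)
Your proposal is correct and follows the paper's route. The paper proves Theorem~\ref{Theorem 3_2} by rerunning the proof of Theorem~\ref{Theorem 3}, i.e.\ by checking the four hypotheses of Proposition~\ref{prop:evamonm} with Lemmas~\ref{theo3lemm1_2}, \ref{theo3lemm2_2} and Corollary~\ref{theo3coro2_2}, exactly as you do; your choices $\delta=\theta=\gamma/4$ and $C_2=\gamma/12$ are the natural analogues of the unbiased constants.

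One small correction to your side remark. In a state of $\mathcal{C}^o_\alpha$, the maximum of column $o$ is only guaranteed to exceed the entries of a null row by $1$, not by $1+\gamma$. Indeed, since $c_p\leq c_o-1$, every non-null row satisfies $u(a,o)=c_o-\gamma\sum_{p\neq o}c_p\geq 1$, and this bound can be essentially attained. Because $1>\gamma/2$, your conclusion that a $\tilde{\xi}$-step must select opinion $o$ still holds.
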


The proof of Theorem \ref{Theorem 3_2} follows exactly as the proof of Theorem \ref{Theorem 3}.

\section*{Acknowledgements}

This work was produced as part of the activities of FAPESP Research, Innovation and Dissemination Center for Neuromathematics (grant \# 2013/ 07699-0 , S.Paulo Research Foundation (FAPESP)). 
KL was successively supported by FAPESP fellowship (grant 2022/07386-0 and 2023/12335-9).

This article is dedicated to the memory of Antonio Galves, who supervised FP research internship in 2023 where this work was initiated and introduced the research subjects of the present article to both authors with enthusiasm. 
The authors thank the NeuroMat group at USP for support.

\printbibliography

\end{document}